\newtheorem{thm}{Theorem}[section]
\newtheorem{lem}[thm]{Lemma}
\newtheorem{prop}[thm]{Proposition}
\theoremstyle{definition}
\newtheorem{defn}[thm]{Definition}
\numberwithin{equation}{section}
\newcommand{\R}{\mathbb{R}}
\newcommand{\IC}{\mathbb{C}}
\newcommand{\IZ}{\mathbb{Z}}
\newcommand{\ID}{\mathbb{D}}
\newcommand{\cH}{\mathcal{H}}
\newcommand{\cF}{\mathcal{F}} 
\renewcommand{\P}{\mathcal{P}}
\newcommand{\cE}{\mathcal{E}}
\newcommand{\loc}{\operatorname{loc}}
\renewcommand{\L}{\operatorname{L}} 
\renewcommand{\H}{\operatorname{H}}
\newcommand{\Lloc}{\L_{\operatorname{loc}}} 
\newcommand{\C}{\operatorname{C}} 
\renewcommand{\H}{\operatorname{H}} 
\newcommand{\E}{\mathrm{E}} 
\newcommand{\I}{\mathrm{I}}
\newcommand{\II}{\mathrm{II}}
\newcommand{\ree}{{\mathbb{R}^{n+1}}}
\newcommand{\gradx}{\nabla_x}
\renewcommand{\div}{\operatorname{div}}
\newcommand{\divx}{\div_x}
\newcommand{\dhalf}{D_t^{1/2}} 
\newcommand{\HT}{H_t} 
\newcommand{\B}{B}
\renewcommand{\i}{\mathrm{i}} 
\renewcommand{\d}{\, \mathrm{d}} 
\newcommand{\eps}{\epsilon} 
\renewcommand\Re{\operatorname{Re}}
\renewcommand\Im{\operatorname{Im}}
\newcommand{\cl}[1]{\overline{#1}} 
\DeclareMathOperator{\dom}{\mathsf{D}} 
\DeclareMathOperator\Max{\mathcal{M}} 
\newcommand{\sgn}{\operatorname{sgn}}
\def\Xint#1{\mathchoice
{\XXint\displaystyle\textstyle{#1}}%
{\XXint\textstyle\scriptstyle{#1}}%
{\XXint\scriptstyle\scriptscriptstyle{#1}}%
{\XXint\scriptscriptstyle%
\scriptscriptstyle{#1}}%
\!\int}
\def\XXint#1#2#3{{\setbox0=\hbox{$#1{#2#3}{%
\int}$ }
\vcenter{\hbox{$#2#3$ }}\kern-.6\wd0}}
\def\barint{\,\Xint -} 
\def\bariint{\barint_{} \kern-.4em \barint}
\def\bariiint{\bariint_{} \kern-.4em \barint}
\renewcommand{\iint}{\int_{}\kern-.34em \int} 
\renewcommand{\iiint}{\iint_{}\kern-.34em \int} 
\title[The Kato square root problem for weighted parabolic operators]{The Kato square root problem for\\ weighted parabolic operators}
\author{Alireza Ataei}
\email{alireza.ataei@math.uu.se}
\address{Department of Mathematics, Uppsala University, S-751 06 Uppsala,
Sweden}
\author{Moritz Egert}
\email{egert@mathematik.tu-darmstadt.de}
\address{TU Darmstadt, Fachbereich Mathematik, Schlossgartenstr.\ 7, 64289 Darmstadt, Germany}
\author{Kaj Nystr\"{o}m}
\email{kaj.nystrom@math.uu.se}
\address{Department of Mathematics, Uppsala University, S-751 06 Uppsala,
Sweden}
\thanks{}
\subjclass[2020]{
Primary:
42B37, 
35K10, 
42B25. 
Secondary:
47B44, 
26A33} 
\keywords{
Kato square root problem, second-order parabolic operators, Muckenhoupt weights, square function estimates, fractional derivatives.
}
\date{\today}
\begin{document}
\begin{abstract}
We give a simplified and direct proof of the Kato square root estimate for parabolic operators with elliptic part in divergence form and coefficients possibly depending on space and time in a merely measurable way. The argument relies on the nowadays classical reduction to a quadratic estimate and a Carleson-type inequality. The precise organization of the estimates is different from earlier works. In particular, we succeed in separating space and time variables almost completely despite the non-autonomous character of the operator. Hence, we can allow for degenerate ellipticity dictated by a spatial $A_2$-weight, which has not been treated before in this context.
\end{abstract}

\maketitle

\section{Introduction and main result}
 In the variables $(x,t) \in \R^n \times \R \eqqcolon \ree$, we consider parabolic operators of the form
\begin{eqnarray}\label{eq1deg+}
\cH u \coloneqq \partial_t u  -w^{-1} \div_{x} (A \nabla_{x}u),
\end{eqnarray}
where the weight $w = w(x)$ is time-independent and belongs to the spatial Muckenhoupt class $A_2(\mathbb R^{n},\d x)$, and the coefficient matrix $A = A(x,t)$ is measurable with complex entries and possibly depends on all variables. Degeneracy is dictated by the same weight $w$ in the sense that $w^{-1}A$ satisfies the classical uniform ellipticity condition (Section~\ref{the coefficients}).

Weighted parabolic operators as in \eqref{eq1deg+} occur in many different contexts and applications, including the study of fractional powers of parabolic operators \cite{LN} and heat kernels of Schr{\"o}dinger equations with singular potential \cite{IKO}.  For contributions to
the study of local properties of solutions to  $\cH u = 0$ and Gaussian estimates, we refer to \cite{CS,CUR2}.

The purpose of this paper is to establish the Kato (square root) estimate for $\cH$, that is, to prove Theorem \ref{thm:Kato} stated below. We write $\L^2_\mu = \L^2(\ree, \d w \d t)$, $\d\mu=\d w \d t=w(x)\d x \d t$, for the natural weighted Lebesgue space associated with $\cH$, and $\E_\mu$ for the corresponding first-order parabolic Sobolev space of functions $u$ such that the spatial gradient $\gradx u$, as well as the half-order time derivative $\dhalf u$, are in $\L^2_\mu$. For the sake of the introduction, an intuitive interpretation of these objects suffices. We turn to rigorous definitions in Section~\ref{sec: function spaces} below.

\begin{thm}\label{thm:Kato} The operator $\cH$ can be defined as a maximal accretive operator in $\L_\mu^2$ associated with an accretive sesquilinear form with domain $\E_\mu$. The domain of its unique maximal accretive square root is the same as the form domain, that is $\dom(\sqrt{\cH}) = \E_\mu$, and
	\begin{align*}
		\|\sqrt {\cH}\, u\|_{\L^2_\mu} \sim \|\nabla_x u\|_{\L^2_\mu}+ \| \dhalf u\|_{\L^2_\mu}  \qquad (u \in \E_\mu)
	\end{align*}
holds with an implicit constant that depends on the dimension, the ellipticity parameters of $A$ and the $A_2$-constant for $w$.
\end{thm}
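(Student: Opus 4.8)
The plan is to follow the by-now classical route: realise $\cH$ through a sesquilinear form, reduce the square root estimate to a quadratic estimate, and settle the latter by passing to a principal part and proving a Carleson measure bound. The leitmotiv is that the time variable should meet the spatial operator only through the one-dimensional Fourier multipliers $\dhalf$ and $\HT$ (the Hilbert transform in $t$): one has $\partial_t=\dhalf\,\HT\,\dhalf$, these commute with all operations in $x$, and $\HT$ is an isometry on the $t$-factor of $\L^2_\mu$ (recall $\d\mu=\d w\,\d t$ is a product measure). Concretely, I would first introduce on $\E_\mu$ the bounded, accretive form
\[
  \mathfrak a(u,v)\coloneqq\int_{\ree}A\gradx u\cdot\overline{\gradx v}\,\d\mu+\int_{\ree}\HT\dhalf u\,\overline{\dhalf v}\,\d\mu ,
\]
accretivity coming from $\Re\mathfrak a(u,u)=\Re\int_{\ree} A\gradx u\cdot\overline{\gradx u}\,\d\mu\ge\lambda\|\gradx u\|_{\L^2_\mu}^2\ge0$, the time term being purely imaginary. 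The representation theorem for accretive (not necessarily sectorial) forms produces the maximal accretive operator $\cH$ with form domain $\E_\mu$, hence a unique maximal accretive $\sqrt\cH$. Since $\cH^\ast$ has the same structure (with $A^\ast$ and $-\partial_t$), the theorem reduces to the a priori bound $\|\sqrt\cH\,u\|_{\L^2_\mu}\lesssim\|\gradx u\|_{\L^2_\mu}+\|\dhalf u\|_{\L^2_\mu}$ on $\E_\mu$, proved for both $\cH$ and $\cH^\ast$: the reverse inequality then follows for $u\in\dom(\cH)$ from $\Re\mathfrak a(u,u)\gtrsim\|\gradx u\|_{\L^2_\mu}^2$ together with the identity $\mathfrak a(u,\HT u)=\int_{\ree} A\gradx u\cdot\overline{\HT\gradx u}\,\d\mu+\|\dhalf u\|_{\L^2_\mu}^2$ (here $\HT u\in\E_\mu\subseteq\dom\sqrt{\cH^\ast}$ because $\HT$ is an isometry commuting with $\gradx$ and $\dhalf$), followed by a density argument.

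\emph{Reduction to a quadratic estimate.} Writing $\cH u=\dhalf\HT\dhalf u-w^{-1}\divx(A\gradx u)$ and using the Balakrishnan formula $\sqrt\cH\,u=\tfrac2\pi\int_0^\infty(\id+s^2\cH)^{-1}\cH u\,\d s$ on $\dom(\cH)$ (convergent there, and legitimate since $\|(\id+s^2\cH)^{-1}\|\le1$ by maximal accretivity), one gets $\sqrt\cH\,u=\tfrac2\pi\int_0^\infty\Theta_s(\dhalf u,\gradx u)\,\tfrac{\d s}{s}$ with
\[
  \Theta_s(g_0,g')\coloneqq sR_s\bigl(\dhalf\HT g_0-w^{-1}\divx(Ag')\bigr),\qquad R_s\coloneqq(\id+s^2\cH)^{-1},
\]
a family of operators from $\L^2_\mu$-fields to $\L^2_\mu$. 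A standard almost-orthogonality (Cotlar--Stein/Schur) argument then reduces the a priori bound to the square function estimate $\int_0^\infty\|\Theta_s(g_0,g')\|_{\L^2_\mu}^2\,\tfrac{\d s}{s}\lesssim\|g_0\|_{\L^2_\mu}^2+\|g'\|_{\L^2_\mu}^2$. The inputs to be produced here are uniform $\L^2_\mu$-boundedness of $\Theta_s$ and parabolic off-diagonal (Davies--Gaffney) estimates for $R_s$ at the parabolic scale $s$ (that is, on $\pdist$-balls of radius $\sim s$), with constants depending only on the ellipticity of $A$ and $[w]_{A_2}$.

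\emph{Principal part and Carleson estimate.} All of this is run over the metric measure space $(\ree,\pdist,\d\mu)$: because $w\in A_2$, $\d\mu$ is doubling for $\pdist$, carries a system of parabolic dyadic cubes, the uncentred parabolic maximal operator $\Max$ is bounded on $\L^2_\mu$, and weighted parabolic $(2,2)$-Poincar\'e inequalities hold on cubes. Off-diagonal decay together with Poincar\'e lets one replace $\Theta_s(g_0,g')$ by its principal part $\gamma_s(x,t)\,\langle(g_0,g')\rangle_{Q_s(x,t)}$, with $Q_s(x,t)$ the parabolic cube of side $s$ about $(x,t)$; crucially, the $g_0$-component of a constant field is annihilated by $\dhalf$, so $\gamma_s$ only pairs with the spatial average and the genuinely $t$-dependent contribution is absorbed by the one-dimensional calculus in $t$. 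By the weighted parabolic Carleson embedding theorem and boundedness of $\Max$ it then suffices to show that $|\gamma_s(x,t)|^2\,\tfrac{\d\mu\,\d s}{s}$ is a Carleson measure, which I would establish by a stopping-time argument (a local $T(b)$): over a fixed parabolic cube one tests $\gamma_s$ against $\cH$-adapted functions $b$ — obtained by applying resolvents to coordinate functions of $x$ and to $t$ — so that $\Theta_s$ of the associated first-order field equals $sR_s\cH b$ and is therefore negligible, and one converts the outcome into a Carleson bound via a parabolic John--Nirenberg lemma.

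\emph{Main obstacle.} The hard part is the interplay, in the last two steps, of non-autonomy with $A_2$-degeneracy: one must obtain the parabolic off-diagonal estimates for $R_s=(\id+s^2\cH)^{-1}$ with the correct anisotropic scaling and with constants controlled purely by ellipticity and $[w]_{A_2}$, even though $\partial_t$ does not commute with the $t$-dependent spatial part and generates only a non-analytic translation group. Separating the variables is exactly the device that makes this feasible — $\partial_t$ is carried by $\dhalf$ and $\HT$ acting in $t$ alone, while the genuinely $t$-dependent object $w^{-1}\divx(A\gradx\cdot)$ is treated by degenerate-elliptic technology for a.e.\ fixed time and integrated afterwards — but making accretivity, the off-diagonal bookkeeping and the underlying weighted parabolic harmonic analysis survive this split is where the real work lies.
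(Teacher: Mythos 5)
Your overall architecture (form, reduction to a quadratic estimate, principal part, Carleson via a local $Tb$) matches the paper's, but two steps are asserted in a way that would fail as written.

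First, the definition of $\cH$. You claim that ``the representation theorem for accretive (not necessarily sectorial) forms'' produces a maximal accretive operator with form domain $\E_\mu$. No such off-the-shelf theorem applies here: since $\Re\mathfrak a(u,u)$ contains no contribution from $\dhalf u$ (the time term is purely imaginary, as you note), the form is accretive but \emph{not} closed, i.e.\ $(\|u\|_{2,\mu}^2+\Re\mathfrak a(u,u))^{1/2}$ is not equivalent to $\|u\|_{\E_\mu}$, and Lax--Milgram/Kato's first representation theorem do not give surjectivity of $1+\cH$ onto $\L^2_\mu$. This is exactly the point the paper flags; maximal accretivity is obtained by Kaplan's hidden coercivity, testing the form against $(1+\delta\HT)v$ so that the skew-adjointness of $\HT$ releases a positive $\delta\|\dhalf u\|_{2,\mu}^2$, and then running Lax--Milgram for the twisted form $\B_{\delta,\sigma}$. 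You use precisely this identity later for the lower bound $\|\ID u\|_{2,\mu}\lesssim\|\sqrt\cH u\|_{2,\mu}$, but you need it already at the stage of constructing $\cH$ and proving density of $\dom(\cH)$ in $\E_\mu$ (which your density argument silently uses).

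Second, the quadratic estimate. Your $\Theta_s$ contains the block $sR_s\dhalf\HT$, and you propose to run off-diagonal decay plus Poincar\'e plus principal part approximation on the full field $(g_0,g')$. The operator $\dhalf$ is non-local, so $sR_s\dhalf\HT$ does \emph{not} satisfy exponential off-diagonal estimates, and the Poincar\'e inequality needed to subtract averages in the $g_0$-slot is the non-local one; this is exactly the machinery of Auscher--Egert--Nystr\"om that the paper is at pains to avoid. The paper's device is a regrouping: the time block is written as $\lambda\cE_\lambda\dhalf\,\P_\lambda\dhalf\HT f$ and controlled by the uniform resolvent bound for $\lambda\cE_\lambda\dhalf$ together with one-dimensional Littlewood--Paley for $\lambda\dhalf\P_\lambda$, so that off-diagonal estimates, the principal part $\mathcal{U}_\lambda w^{-1}A$, and the Carleson/$Tb$ argument are applied \emph{only} to the spatial block $\lambda\cE_\lambda w^{-1}\divx w$. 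Consequently the test functions are just cut-off spatial linear functions $\chi_\Delta(x,t)\,(x-x_\Delta)\cdot\overline\zeta$ fed into the resolvent; no $t$-adapted test functions and no John--Nirenberg step are needed. Your remark that ``$\dhalf$ annihilates constants'' points in the right direction but does not substitute for this regrouping. (Minor: your form should carry $w^{-1}A$ against $\d\mu$, or $A$ against $\d x\,\d t$; as written the weight enters twice.)
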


The time derivative $\partial_t$ is a skew-adjoint operator and hence there are no lower bounds for the formal pairing $\Re \langle \cH u, u\rangle$ that contains derivatives in $t$. However, when the time variable describes the full real line, parabolic operators admit some `hidden coercivity' that can be revealed through the Hilbert transform $\HT$ in the $t$-variable. Indeed, splitting $\partial_t = \dhalf \HT \dhalf$, the sesquilinear form associated with \eqref{eq1deg+} over $\L^2_\mu$ is
\begin{align}
	\label{hidden coercivity_intro}
	\B(u,v) \coloneqq \iint_{\ree} w^{-1} A \gradx u \cdot \cl{\gradx v} +  \HT \dhalf u \cdot \cl{\dhalf v} \, \d w \d t \qquad (u,v \in \E_\mu),
\end{align}
and lower bounds including both time and space derivatives become available when taking $v = (1+\delta \HT)u$ with $\delta>0$ small. This observation is originally due to Kaplan~\cite{Kaplan} and has been re-discovered several times ever since, see \cite{HLe,N1, DZ} for example. Maximal accretivity of $\cH$ essentially follows from this observation, but to the best of our knowledge  an explicit statement, in the unweighted case $w=1$, only appeared much later in \cite{AE}. For the reader's convenience, we reproduce the full argument in our weighted setting in Section~\ref{sec: parabolic operator}. Being maximal accretive, $\cH$ admits a sectorial functional calculus and in particular a (unique) maximal accretive square root $\sqrt{\cH}$, see \cite{Haase, Kato} for background. This is how our main result should be understood.

The pursuit for the solution of the Kato problem for unweighted elliptic operators (finally completed in \cite{AHLMcT}) introduced new techniques that proved themselves extremely viable for extensions and applications to other problems in harmonic analysis and partial differential equations~\cite{ Auscher-Axelsson, Auscher-Rosen, Auscher-Mourgoglou, MorrisWeightedElliptic, Li, Escauriaza, Auscher-Amenta, HKMP, 5Man, AEN1, CNS, N2}. For this reason, Kato-type estimates for different operators are desirable and the results of this paper most surely have important implications for, and applications to, boundary value problems for weighted second-order parabolic operators.

Let us mention that the case of $A_2$-weighted elliptic operators was settled in \cite{CUR}, see also \cite{CUR-Chema} for an extension, and re-discovered in the more general framework of first-order Dirac operators in \cite{ARR}. The third author was first to develop the underlying harmonic analysis in the unweighted parabolic setting in \cite{N1}, and in the same paper he proved the square function estimates that are essentially equivalent to Theorem~\ref{thm:Kato} when $w\equiv 1$ and when the coefficients $A$ are $t$-independent. Using a framework of parabolic Dirac operators, Auscher, together with the second and the third author, obtained the unweighted parabolic case when coefficients depend measurably on $x$ \emph{and} $t$~\cite{AEN}. Our Theorem~\ref{thm:Kato} completes this succession of results but there is more to it and that makes,  as we shall discuss next, the present paper interesting even in the unweighted case.

Under the assumption $A = A(x)$ in \cite{N1}, the operator $\cH$ is an autonomous parabolic operator and in retrospect, the main result in \cite{N1} could have been obtained by interpolation from maximal regularity of the Cauchy problem for \eqref{eq1deg+}, see \cite{Ouhabaz}. (In fact, this argument requires only smoothness of order ${1}/{2}$ for the coefficients in the $t$-variable.)  However, many of the techniques in \cite{N1} had already been strong enough for proving the parabolic Kato estimate in the presence of measurable $t$-dependence and our proof of Theorem~\ref{thm:Kato} shows exactly how, thereby making our result novel in at least two ways:

\begin{itemize}
	\item We generalize all previous findings in the parabolic setting by combining measurable dependence of the coefficients on all variables with $A_2$-weighted degeneracy in space.\\[-7pt]
	\item We avoid the Dirac operator framework in \cite{AEN}. The resulting `second-order' approach for parabolic operators with time-dependent measurable coefficients has not appeared in the literature before, and, when restricted to the unweighted case $w \equiv 1$, it provides a significant simplification of the proof of \cite[Thm.~2.6]{AEN}.
\end{itemize}
Our ambition is to present an almost self-contained argument using only a minimal number of tools. We do not attempt to generalize all further results in \cite{AEN} to the weighted setting, which should be done by developing a parabolic weighted Dirac operator framework.

As is customary in the field, the first reduction in the proof of Theorem~\ref{thm:Kato} is to use the bounded $\H^\infty$-calculus for maximal accretive operators and a duality argument in order to reduce the matter to the one-sided quadratic estimate
\begin{align}
\label{QE_intro}
	\int_0^\infty \|\lambda \cH (1+\lambda^2 \cH)^{-1} u\|_{\L^2_\mu}^2 \, \frac{\d \lambda}{\lambda} \lesssim  \|\nabla_x u\|_{\L^2_\mu}^2 + \| \dhalf u\|_{\L^2_\mu}^2  \qquad (u \in \E_\mu).
\end{align}
In contrast to the elliptic setting, this reduction does not follow immediately from classical results à la Kato and Lions, since the sesquilinear form $B$ in \eqref{hidden coercivity_intro} is \emph{not} closed due to the lack of lower bounds by half-order time derivatives. Some more care is needed but we settle the issue in Section~\ref{sec: reduction QE}. The quadratic estimate \eqref{QE_intro} is then achieved by slightly refining the techniques in \cite{N1} and the argument relies on (weighted) Littlewood--Paley theory in $\L^2$ (Section~\ref{sec: weighted LP}), which eventually reduces matters to a Carleson measure estimate that can be proved through a $Tb$-procedure (Section~\ref{sec2}).

It came as a surprise to us that even though coefficients may depend measurably on all variables, the proof of \eqref{QE_intro} can be arranged in a way that almost completely separates time and space variables. This observation incarnates in three different stages of the proof:
\begin{itemize}
	\item At the level of Littlewood--Paley theory, it suffices to use weighted elliptic theory in $x$ and classical Fourier analysis in $t$. \\[-7pt]
	\item At the level of off-diagonal bounds (averaged `kernel' bounds, see Section~\ref{subsec: OD}), we only need estimates for operators involving differentiation in space. These estimates can be deduced directly from the equation and come with parabolic scaling. The much more involved off-diagonal decay and Poincaré inequalities for non-local derivatives such as $\dhalf$, which were fundamental novelties in~ \cite{AEN}, can be avoided. \\[-7pt]
	\item At the level of the $Tb$-argument, the test functions  can be constructed based on a product structure, which makes the argument more straightforward compared to the system of functions used in \cite{AEN}.
\end{itemize}
These three observations have in common that we can re-group derivatives of resolvents of $\cH$ in such a way that fine harmonic analysis estimates need only to be applied to the spatial parts, whereas the $t$-derivatives appear in blocks that are amenable for simple resolvent estimates in $\L^2_\mu$-norm. We shall indicate the most striking examples of this principle along with the proof of the Carleson measure estimate in the final section.

The next section contains some preliminary notations and conventions. The rest of the paper follows the outline above.
\section{Preliminaries and basic assumptions}
\label{sec: preliminaries}

Given $(x,t)\in\mathbb R^{n}\times\mathbb R$, we let $\|(x,t)\|:=\max \{|x|, |t|^{1/2}\}$. We call $\|(x,t)\|$
the parabolic norm of $(x,t)$. Given a half-open cube $Q = (x-r/2,x+r/2]^n \subset\mathbb R^n$ parallel to the coordinate axes with sidelength $r$ and an interval $I=(t-r^2/2,t+r^2/2]$, we call $\Delta:=Q\times I\subset\mathbb R^{n+1}$ a parabolic cube of size $r$. Occasionally, we write $\Delta_r(x,t) = Q_r(x) \times I_r(t)$ and $r=\ell(\Delta)$ to indicate the center and size directly. A dyadic parabolic cube of size $2^j$ is by definition centered in $(2^j \IZ)^n \times (4^j \IZ)$. For every $c>0$,  and given $\Delta$, we define $c \Delta$ as the parabolic cube with the same center as $\Delta$ and size $c \ell(\Delta)$.

\subsection{Assumptions and notations concerning the weight}
\label{the weight}
For general background and the results cited here, we refer to \cite[Ch.~V]{Stein}.  The weight $w=w(x)$ is a real-valued function belonging to the Muckenhoupt class $A_2(\mathbb R^{n},\d x)$, that is,
\begin{equation}
	\label{A2}
[w]_{A_2} \coloneqq \sup_Q \bigg(\barint_{Q} w\, \d x\bigg)\,
\bigg(\barint_{Q} w^{-1} \, \d x \bigg) < \infty,
\end{equation}
where the supremum is taken with respect to all cubes $Q\subset\mathbb R^n$. We introduce the measure $\d w(x):=w(x) \d x$ on $\mathbb R^{n}$ and we write $w(E) = \int_{E} \d w$ for all Lebesgue measurable sets $E \subset \R^n$. For averages, we use the notation
 $$(g)_{E,w} :=\barint_{E} g(x)\, \d w(x):= \frac{1}{w(E)}\int_{E} g(x)\, w(x) \d x$$
if $w(E)\in (0,\infty)$ and $g$ is locally integrable on $\R^n$ with respect to $\d w(x)$. It follows from \eqref{A2} that there are constants $\eta \in (0,1)$ and $\beta>0$, depending only on $n$ and $[w]_{A_2}$, such that
\begin{align}\label{ainfw}
	\beta^{-1} \biggl (\frac {|E|}{|Q|}\biggr )^{\frac{1}{2 \eta}} \leq \frac {w(E)}{w(Q)}\leq \beta \biggl (\frac {|E|}{|Q|}\biggr )^{2\eta},
\end{align}
whenever $E\subset Q$ is measurable and where $|\cdot|$ denotes Lebesgue measure in $\R^n$.  In particular, there exists a constant $D$ only depending on $[w]_{A_{2}}$ and $n$,  called doubling constant for $w$, such that
\begin{align}\label{doublingc}
	w(2Q)\leq Dw(Q) \mbox{ for all cubes } Q\subset\mathbb R^n.
\end{align}
The measures
\begin{equation}
\begin{aligned}
\label{measure}
\d\mu&=\d\mu(x,t):= w(x) \d x\d t,\\
\d\mu^{-1}&=\d\mu^{-1}(x,t):= w(x)^{-1} \d x\d t,
\end{aligned}
\end{equation}
are defined on $\ree$.  Naturally, $\mu, \mu^{-1}$ can be seen as measures on $\ree$ defined by $A_2(\mathbb R^{n+1},\d x\d t)$ weights and in the context of these measures we use similar notation as above. The doubling constant for $\mu$ with respect to parabolic scaling is $4D$.

\subsection{Maximal functions}
\label{maximal functions}
We introduce the maximal operators in the individual variables
\begin{align*}
	\mathcal{M}^{(1)}(g_1)(x) &:= \sup_{r>0 }{\barint_{Q_r(x)} |g_1|\, \d x},\\
	\mathcal{M}^{(2)}(g_2)(t) &:= \sup_{r>0 }{\barint_{I_r(t)} |g_2|\, \d t},
\end{align*}
for all locally integrable functions $g_1$ and $g_2$ on $\R^{n}$ and $\R$, respectively. The operator $\mathcal{M}^{(1)}$ is bounded on the weighted space $\L^2(\R^n, \d w)$ with a bound depending on $[w]_{A_2}$ and $n$ \cite[Thm.1, p.201]{Stein}. Both maximal operators can be naturally extended to $\L^2(\R^{n+1}, \d \mu)$ by keeping one of the variables fixed, and they are bounded in this setting.

\subsection{Assumptions on the coefficients}
\label{the coefficients}
The matrix-valued function  $$A=A(x,t)=\{A_{i,j}(x,t)\}_{i,j=1}^{n}$$ is assumed to have complex measurable entries $A_{i,j}$ that satisfy
\begin{equation}
\label{ellip}
 c_1|\xi|^2w(x)\leq \Re (A(x,t) \xi \cdot \cl{\xi}), \qquad
 |A(x,t)\xi\cdot\zeta|\leq c_2w(x)|\xi| |\zeta|
\end{equation}
for some $c_1, c_2\in (0,\infty)$ and for all $\xi,\zeta \in \IC^{n}$, $(x,t) \in \R^{n+1}$. Here, $u\cdot v=u_1 v_1 +...+u_{n} v_n$ and $\bar u$ denotes the complex conjugate of $u$ so that $u\cdot \cl{v}$ is the standard inner product on $\IC^{n}$. We refer to $c_1, c_2$ as the ellipticity constants of $A$. Assumption \eqref{ellip} is equivalent to saying that $w^{-1}A$ satisfies the classical uniform ellipticity condition.

\subsection{Floating constants} We refer to $n$ and the constants $[w]_{A_{2}}$, $c_1$, $c_2$,  appearing in \eqref{A2} and \eqref{ellip}, as structural constants. For $A,B\in (0,\infty)$, the notation $A\lesssim B$  means that $A \leq cB$ for some $c$ depending at most on the structural constants. The notations $A\gtrsim B$ and $A \sim B$ should be interpreted similarly.
\section{Weighted function spaces}
\label{sec: function spaces}

  In this section we give a brief account of the relevant weighted function spaces. We let $\L^2_w = \L^2(\R^n, \d w)$ be the Hilbert spaces of square integrable functions with respect to $\d w$. Its norm is denoted by $\|\cdot\|_{2,w}$, its inner product by $\langle \cdot, \cdot \rangle_{2,w}$, and the operator norm of linear operators on that space by $\|\cdot\|_{2\to2, w}$. Thanks to the $A_2$-condition, we have
\begin{align}\label{embedd}
	\L^2_w \subset \Lloc^1(\R^n, \d x),
\end{align}
and the class $\C_0^\infty(\R^n)$ of smooth and compactly supported test functions is dense in $\L^2_w$ via the usual truncation and convolution procedure~\cite[Sec.~1]{Kilp}. The same notations and properties apply to $\L^2_\mu$ in $\ree$.

\begin{defn}[Elliptic weighted Sobolev space]
\label{def: elliptic Sobolev space}
We write $\H_w^{1} := \H_w^{1}(\R^n)$ for the space of all $f \in \L_w^2$ for which the distributional gradient $\gradx f$ is (componentwise) in $\L_w^2$, and we equip the space with the norm $\|\cdot\|_{\H_w^1} \coloneqq (\|\cdot\|_{2,w}^2 + (\|\gradx \cdot\|_{2,w}^2)^{1/2}$.
\end{defn}

By construction $\H^1_w$ is a Hilbert space and standard truncation and convolution techniques yield that $\C_0^\infty(\R^n)$ is dense in $\H_w^{1}$, see~\cite[Thm.~2.5]{Kilp}.

In order to define parabolic function spaces, we use the Fourier transform $\cF$ in the time-variable, keeping in mind that if $f \in \L^2(\ree, \d \mu)$, then $f(x,\cdot) \in \L^2(\R, \d t)$ for a.e.\ $x \in \R^n$. The corresponding Fourier variable will be denoted by $\tau$. Then,
\begin{align*}
	\HT f \coloneqq \cF^{-1}(\i \sgn(\tau) \cF f)
\end{align*}
is our Hilbert transform. If $|\tau|^{1/2} \cF f \in \L^2_\mu$, then we define the half-order time derivative
\begin{align*}
	\dhalf f \coloneqq \cF^{-1}(|\tau|^{1/2} \cF f),
\end{align*}
and this is what we mean when we write $\dhalf f \in \L_\mu^2$. Using a classical formula for fractional Laplacians, see \cite{Hitchhiker} for example, we obtain for a.e.\ fixed $x \in \R^n$,
\begin{align}
\label{difference quotient formula}
\|\dhalf f\|_{2,\mu}^2 = \frac{2}{\pi} \int_{\R^n} \int_\R \int_\R \frac{|f(x,t) - f(x,s)|^2}{|t-s|^{2}} \, \d s \d t \d w(x),
\end{align}
with the right-hand side being finite precisely when $\dhalf f \in \L^2_\mu$.

\begin{defn}[Parabolic energy space]
\label{def: energy space}
We write $\E_\mu \coloneqq \E_\mu(\ree)$ for the space of all $f \in \L^2_\mu$ for which $\gradx f , \dhalf f \in \L^2_\mu$, and we equip the space with the norm
\begin{align*}
	\|\cdot\|_{\E_\mu} \coloneqq (\|\cdot\|_{2,\mu}^2 + \|\gradx \cdot\|_{2,\mu}^2 + \|\dhalf \cdot\|_{2,\mu}^2)^{\frac{1}{2}}.
\end{align*}
For $f \in \E_\mu$, we will refer to the vector $\ID f \coloneqq (\gradx f, \dhalf f)$ as the parabolic gradient of $f$.
\end{defn}

Again, $\E_\mu$ is a Hilbert space. Note that in the unweighted setting of \cite{N1} the notation $\ID$ has a slightly different meaning.

\begin{lem}
\label{lem: energy approximation}
The following statements are true:
	\begin{enumerate}
		\item The space $\C_0^\infty(\ree)$ is dense in $\E_\mu(\ree)$.
		\item Multiplication by $\C^1(\R^{n+1})$-functions is bounded on $\E_\mu(\ree)$.
	\end{enumerate}
\end{lem}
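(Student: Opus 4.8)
The plan is to treat the two assertions by reducing each to a one-dimensional statement in time composed with the already-available elliptic weighted Sobolev theory in space. For (i), I would use a two-step mollification/truncation procedure that separates the variables. First I would regularize in time: given $f \in \E_\mu$, convolve with a smooth approximate identity $\rho_\epsilon(t)$ acting only in $t$. Since $\HT$ and $\dhalf$ are Fourier multipliers in $\tau$, they commute with $t$-convolution, so $\gradx(f * \rho_\epsilon) = (\gradx f) * \rho_\epsilon$ and $\dhalf(f * \rho_\epsilon) = (\dhalf f) * \rho_\epsilon$; boundedness of $t$-convolution on $\L^2(\R,\d t)$ fiberwise, followed by integration $\d w(x)$, gives convergence in $\E_\mu$ and smoothness in $t$. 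Next I would regularize in space and cut off: for fixed-in-time-smooth functions one applies the standard weighted elliptic procedure from \cite{Kilp} that makes $\C_0^\infty(\R^n)$ dense in $\H^1_w$, now with the additional parameter $t$ carried along; the key point is that the spatial mollifier and the spatial cutoff commute with the purely temporal operators $\dhalf$ and $\HT$, so the $\dhalf$-component is controlled by the same mollification on its own $\L^2_\mu$-norm. Finally one truncates in $t$ by a smooth compactly supported factor $\chi(t/R)$; here one must check that multiplication by $\chi(t/R)$ is (uniformly) bounded on $\E_\mu$ and converges strongly to the identity, which uses the difference-quotient formula \eqref{difference quotient formula} to handle the $\dhalf$-term via $|\chi(t/R)-\chi(s/R)| \lesssim R^{-1}|t-s|$ together with dominated convergence.

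For (ii), let $m \in \C^1(\R^{n+1})$ with bounded derivatives (this suffices since in the application one multiplies by functions that are constant outside a compact set, or one localizes first). The spatial gradient satisfies the product rule $\gradx(mf) = m\gradx f + (\gradx m) f$, and both terms lie in $\L^2_\mu$ because $m$ and $\gradx m$ are bounded. For the half-derivative I would again invoke \eqref{difference quotient formula}:
\begin{align*}
\|\dhalf(mf)\|_{2,\mu}^2 = \frac{2}{\pi}\int_{\R^n}\int_\R\int_\R \frac{|m(x,t)f(x,t) - m(x,s)f(x,s)|^2}{|t-s|^2}\, \d s\, \d t\, \d w(x),
\end{align*}
and split $m(x,t)f(x,t) - m(x,s)f(x,s) = m(x,t)(f(x,t)-f(x,s)) + (m(x,t)-m(x,s))f(x,s)$. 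The first piece is bounded by $\|m\|_\infty^2 \|\dhalf f\|_{2,\mu}^2$. For the second, the near-diagonal part $|t-s|\le 1$ uses $|m(x,t)-m(x,s)| \lesssim |t-s|$ to reduce to $\int |t-s|^{-2}|t-s|^2 = $ a convergent local integral times $\|f\|_{2,\mu}^2$, while the far part $|t-s|>1$ uses $|m(x,t)-m(x,s)|\le 2\|m\|_\infty$ and the integrability of $|t-s|^{-2}$ at infinity; both are controlled by $\|m\|_{\C^1}^2\|f\|_{2,\mu}^2$.

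The main obstacle I anticipate is bookkeeping in part (i): making sure that at each regularization step the newly introduced smoothing genuinely commutes with the operators that define the other two components of the $\E_\mu$-norm, so that no single approximation spoils convergence of $\gradx$ and $\dhalf$ simultaneously. The nonlocal nature of $\dhalf$ means one cannot naively cut off in $t$ without the quantitative control from \eqref{difference quotient formula}; this is precisely why the time-truncation is done last and estimated by hand. Once the commutation structure is fixed, everything reduces either to classical fiberwise $\L^2(\R,\d t)$-harmonic analysis in time or to the weighted elliptic density theorem of \cite{Kilp} in space, both of which are available to us.
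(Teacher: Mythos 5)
Your proposal is correct and follows essentially the same route as the paper: mollification handled separately in space (via the weighted theory) and in time (via Fourier multipliers), truncation controlled through the difference-quotient representation \eqref{difference quotient formula} with the combination of the Lipschitz bound near the diagonal and the uniform bound far from it yielding an integrable majorant, and part (ii) proved by exactly the same splitting of $m(x,t)f(x,t)-m(x,s)f(x,s)$. The only cosmetic difference is that the paper uses a single space--time cutoff $\eta_j(x,t)$ where you truncate in $x$ and $t$ in two separate stages; both work.
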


\begin{proof}
We begin with (i). If $f \in \E_\mu$, then convolutions with smooth mollifiers, separately in $x$ and $t$, provide smooth approximations in $\E_\mu$. For the convolution in space, this argument uses the $A_2$-condition on $w$ as mentioned above. Hence, it suffices to approximate $f$ by compactly supported functions in $\E_\mu$. To this end, we can follow the standard pattern of smooth truncation: We pick a sequence $(\eta_j)_j \subset \C_0^\infty(\ree)$ such that $\eta_j \to 1$ pointwise a.e. as $j \to \infty$, $\|\eta_j\|_\infty + j \|\gradx \eta_j\|_\infty + j \|\partial_t \eta_j\|_\infty \leq c$ uniformly in $j$, and then we set $f_j \coloneqq \eta_j f$. By dominated convergence, we obtain $f_j \to f$ and $\gradx f_j \to \gradx f$ in $\L_\mu^2$ as $j \to \infty$. For the half-order derivative, we use \eqref{difference quotient formula} with $f_j-f$ in place of $f$. We first bound the integrand in \eqref{difference quotient formula} by
\begin{align}
\label{eq1: energy approximation}
\begin{split}
\frac{|(f_j-f)(x,t) - (f_j-f)(x,s)|^2}{|t-s|^{2}}
&\leq 2 \frac{|(\eta_j-1)(x,t) - (\eta_j-1)(x,s)|^2}{|t-s|^{2}} |f(x,t)|^2 \\
&\quad +  2 \frac{|f(x,t) - f(x,s)|^2}{|t-s|^{2}} |(\eta_j-1)(x,s)|^2 \\
&\leq 2 \min \bigg\{ c^2, \frac{4 (c+1)^2}{|t-s|^2} \bigg\} |f(x,t)|^2 \\
&\quad +  2 (c+1)^2 \frac{|f(x,t) - f(x,s)|^2}{|t-s|^{2}}.
\end{split}
\end{align}
The right-hand side is independent of $j$ and integrable with respect to $\!\d s \d t \d w(x)$. Since the middle term tends to $0$ a.e.\ as $j \to \infty$, we conclude $\|\dhalf (f_j-f)\|_{2,\mu} \to 0$ by dominated convergence. This completes the proof of (i).

As for (ii), we note that if $\eta \in \C^1(\ree)$ and $f \in \E_\mu$, then
\begin{align*}
	\|\eta f\|_{2,\mu} & \leq \|\eta\|_\infty \|f\|_{2,\mu}, \\
	\|\gradx(\eta f)\|_{2,\mu} & \leq \|\eta\|_\infty \|\gradx f\|_{2,\mu} + \|\gradx \eta\|_\infty \|f\|_{2,\mu}, \\
	\|\dhalf(\eta f)\|_{2,\mu} & \leq \sqrt{8} \|\eta\|_\infty^{\frac{1}{2}} \|\partial_t \eta\|_\infty^{\frac{1}{2}} \|f\|_{2,\mu} + \|\eta\|_\infty \|\dhalf f\|_{2,\mu},
\end{align*}
where the third line follows by the same splitting as in \eqref{eq1: energy approximation}, but with $\eta$ in place of $1-\eta_j$.
\end{proof}

Lemma~\ref{lem: energy approximation} (i) implies the chain of continuous and dense embeddings
\begin{align}
\label{Gelfand}
	\E_\mu \subset \L^2_\mu \simeq (\L^2_\mu)^* \subset (\E_\mu)^*,
\end{align}
where we use the upper star to denote (anti)-dual spaces. We have bounded operators
\begin{align}
\label{E differentials}
\begin{split}
	\dhalf&: \E_\mu \to \L^2_\mu, \\
	\nabla_x&: \E_\mu \to (\L^2_\mu)^n, \\
\end{split}
\end{align}
and we denote their adjoints with respect to \eqref{Gelfand} by
\begin{align}
\label{Estar differentials}
\begin{split}
	\dhalf&: \L^2_\mu \to (\E_\mu)^*,\\
	w^{-1} \div_x w &: (\L^2_\mu)^n \to (\E_\mu)^*.\\
\end{split}
\end{align}
Note carefully that $w^{-1} \div_x w$ is only a suggestive notation reflecting the formal action of this operator. In general, there is no guarantee that this operator splits into a composition of its three building blocks.
\section{The parabolic operator}
\label{sec: parabolic operator}

 We continue by introducing the formal parabolic operator in \eqref{eq1deg+} rigorously as an unbounded operator in the Hilbert space $\L^2_\mu$ associated with a sesquilinear form.

Denoting by $\HT$ the Hilbert transform in the $t$-variable and by $\dhalf$ the half-order time derivative as defined in Section~\ref{sec: function spaces}, we can factorize
\begin{align*}
	\partial_t = \dhalf \HT \dhalf.
\end{align*}
By \eqref{E differentials} and \eqref{Estar differentials}  we have $\partial_t: \E_\mu \to (\E_\mu)^*$. We define $\cH$ as a bounded operator $\E_\mu \to (\E_\mu)^*$ via
\begin{align}
\label{hidden coercivity}
(\cH u)(v) \coloneqq \B(u,v) \coloneqq \iint_{\ree} w^{-1} A \gradx u \cdot \cl{\gradx v} +\HT \dhalf u \cdot \cl{\dhalf v} \, \d \mu \qquad (u,v \in \E_\mu).
\end{align}
In view of \eqref{Gelfand} it makes sense to consider the maximal restriction of $\cH$ to an operator in $\L^2_\mu$, called the part of $\cH$ in $\L^2_\mu$, with domain
\begin{align}\label{domain}
	\dom(\cH) := \{u \in \E_\mu(\ree) : \cH u \in \L_\mu^2(\ree) \}.
\end{align}
If $u \in \dom(\cH)$, we have for all $v \in \E_\mu$ that
\begin{align*}
	(\cH u)(v) = \iint_\ree \cH u \cdot \cl{v} \, \d \mu
\end{align*}
and a formal integration by parts in \eqref{hidden coercivity} reveals that it is indeed justified to say that the part of $\cH$ in $\L^2_\mu$ gives a meaning to the formal expression in \eqref{eq1deg+}. More precisely, in terms of \eqref{E differentials} and \eqref{Estar differentials}, we have that
\begin{align}
\label{correct H factorization}
	\cH = \dhalf \HT \dhalf - (w^{-1} \div_xw)(w^{-1}A \nabla_x).
\end{align}

\subsection{Hidden coercivity}
The following lemma relies on Kaplan's~\cite{Kaplan} hidden coercivity of the parabolic sesquilinear form $\B$ in \eqref{hidden coercivity} that can be revealed through the Hilbert transform.

\begin{lem}
\label{lem: hidden coercivity} Let $\sigma \in \IC$ with $\Re \sigma > 0$. For each $f \in (\E_\mu)^*$ there exists a unique $u \in \E_\mu $ such that $(\sigma + \cH) u = f$. Moreover,
\begin{align*}
	\|u\|_{\E_\mu} \leq \sqrt{2} \max \Big \{\frac{c_2 +1}{c_1}, \frac{|\Im \sigma| + 1}{\Re \sigma} \Big\} \|f\|_{(\E_\mu)^*}.
\end{align*}
\end{lem}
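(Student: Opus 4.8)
The strategy is the classical Lax--Milgram argument, but adapted to the fact that $\B$ is not coercive on $\E_\mu$ in its original form, only after the Kaplan trick. Fix $\sigma \in \IC$ with $\Re\sigma > 0$ and consider, for $\delta > 0$ to be chosen, the modified sesquilinear form
\begin{align*}
	\B_\sigma^\delta(u,v) \coloneqq \big((\sigma + \cH)u\big)\big((1 + \delta\HT)v\big) = \sigma\langle u, (1+\delta\HT)v\rangle_{2,\mu} + \B(u, (1+\delta\HT)v)
\end{align*}
on $\E_\mu \times \E_\mu$. Here I use that $\HT$ commutes with $\dhalf$ and $\gradx$, is an isometry on $\L^2_\mu$ (being a Fourier multiplier of modulus one in $t$), and satisfies $\HT^* = -\HT$ and $\HT^2 = -\id$ on the range of $\dhalf$ (i.e.\ on functions with mean zero in $t$), so $1+\delta\HT$ is bounded on $\E_\mu$ with norm $\leq 1+\delta$ by Lemma~\ref{lem: energy approximation}-type reasoning (it is a Fourier multiplier, so it even commutes with everything and preserves each of the three seminorms defining $\|\cdot\|_{\E_\mu}$). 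The point of the factor $1+\delta\HT$ is that $\HT\dhalf u \cdot \cl{\dhalf u}$ is purely imaginary after integration (since $\HT$ is skew-adjoint), so its real part vanishes, whereas pairing against $\delta\HT\dhalf u$ produces $\delta\|\dhalf u\|_{2,\mu}^2$ with the right sign from the term $\iint \HT\dhalf u \cdot \cl{\delta\HT\dhalf u}\,\d\mu = \delta\|\dhalf u\|_{2,\mu}^2$.

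The key computation is the coercivity estimate. Writing everything out,
\begin{align*}
	\Re \B_\sigma^\delta(u,u) = \Re\Big( \sigma\|u\|_{2,\mu}^2 + \sigma\delta\langle u, \HT u\rangle_{2,\mu} + \iint_\ree w^{-1}A\gradx u\cdot\cl{\gradx u}\,\d\mu + \delta\iint_\ree w^{-1}A\gradx u\cdot\cl{\gradx(\HT u)}\,\d\mu + \delta\|\dhalf u\|_{2,\mu}^2 \Big),
\end{align*}
where the term $\iint \HT\dhalf u\cdot\cl{\dhalf u}\,\d\mu$ vanishes upon taking the real part. Using \eqref{ellip} for the fourth-to-last term one gets $\geq c_1\|\gradx u\|_{2,\mu}^2$; the cross term with $\gradx(\HT u)$ is bounded by $\delta c_2\|\gradx u\|_{2,\mu}^2$ (since $\HT$ preserves the $\L^2_\mu$-norm of $\gradx u$); the term $\sigma\delta\langle u,\HT u\rangle_{2,\mu}$ is bounded in absolute value by $\delta|\sigma|\|u\|_{2,\mu}^2$, and one uses $\Re(\sigma\|u\|_{2,\mu}^2) = \Re\sigma\,\|u\|_{2,\mu}^2$. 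Choosing $\delta$ small enough that $\delta c_2 \leq c_1/2$ and $\delta|\sigma| \leq \Re\sigma/2$ — concretely $\delta = \min\{c_1/(2c_2), \Re\sigma/(2|\sigma|+1)\}$, or some explicit variant matching the stated constant — yields
\begin{align*}
	\Re \B_\sigma^\delta(u,u) \gtrsim \delta\big(\|u\|_{2,\mu}^2 + \|\gradx u\|_{2,\mu}^2 + \|\dhalf u\|_{2,\mu}^2\big) = \delta\|u\|_{\E_\mu}^2.
\end{align*}
Boundedness of $\B_\sigma^\delta$ on $\E_\mu\times\E_\mu$ is immediate from \eqref{hidden coercivity}, the boundedness of $1+\delta\HT$ on $\E_\mu$, and Cauchy--Schwarz. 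Lax--Milgram then gives, for every $g \in (\E_\mu)^*$, a unique $u \in \E_\mu$ with $\B_\sigma^\delta(u,v) = g((1+\delta\HT)v)$ for all $v$.

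To conclude, given $f \in (\E_\mu)^*$ I apply the above with $g = f$, obtaining $u$ with $((\sigma+\cH)u)((1+\delta\HT)v) = f((1+\delta\HT)v)$ for all $v \in \E_\mu$. Since $1+\delta\HT$ is invertible on $\E_\mu$ with inverse $\sum_{k\geq 0}(-\delta\HT)^k$ convergent for $\delta<1$ (or, more cleanly, since $(1+\delta\HT)(1-\delta\HT) = (1+\delta^2)\id$ on $\E_\mu$ because $\HT^2 = -\id$ there — note $\HT$ is bounded on $\E_\mu$), the range of $1+\delta\HT$ is all of $\E_\mu$, hence $(\sigma+\cH)u = f$ in $(\E_\mu)^*$. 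Uniqueness follows from the coercivity estimate applied to a difference of two solutions. Finally, the norm bound: from $\delta\|u\|_{\E_\mu}^2 \lesssim \Re\B_\sigma^\delta(u,u) = \Re f((1+\delta\HT)u) \leq (1+\delta)\|f\|_{(\E_\mu)^*}\|u\|_{\E_\mu}$ we get $\|u\|_{\E_\mu} \lesssim \delta^{-1}\|f\|_{(\E_\mu)^*}$, and substituting the choice of $\delta$ produces exactly the factor $\sqrt{2}\max\{(c_2+1)/c_1, (|\Im\sigma|+1)/\Re\sigma\}$ after optimizing the constants in the coercivity step (the precise bookkeeping of the $\sqrt 2$ and the $+1$'s is a matter of being slightly careful in the inequalities above, e.g.\ splitting $|\sigma| \leq |\Im\sigma| + \Re\sigma$ and absorbing).

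\textbf{Main obstacle.} The only genuinely delicate point is verifying that $1+\delta\HT$ is a bounded, invertible operator on $\E_\mu$ — in particular that $\HT$ maps $\E_\mu$ into itself. This is where one needs that $\HT$ is a Fourier multiplier in $t$ of modulus one: it commutes with $\dhalf$ and with $\gradx$ and is isometric on $\L^2_\mu$ (fibered over $x$, via Plancherel in $t$, then integrated against $\d w(x)\,\d t$), so it preserves each of the three seminorms and hence $\|\cdot\|_{\E_\mu}$ exactly. Everything else — the sign of the hidden-coercivity term, the choice of $\delta$, Lax--Milgram — is routine once this is in place. A secondary bit of care is needed to extract the exact stated constant rather than an unspecified $\lesssim$, but that is purely a matter of tracking inequalities.
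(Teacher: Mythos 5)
Your proposal is correct and follows essentially the same route as the paper: the Kaplan twist $\B_{\delta,\sigma}(u,v)=((\sigma+\cH)u)((1+\delta\HT)v)$, coercivity from the skew-adjointness of $\HT$ and the ellipticity of $w^{-1}A$, Lax--Milgram, and invertibility of $1+\delta\HT$ on $\E_\mu$ via the Fourier multiplier description. The only differences are cosmetic bookkeeping you already flag (the paper bounds the $\sigma$-term by $\delta|\Im\sigma|\,\|u\|_{2,\mu}^2$ using that $\langle u,\HT u\rangle_{2,\mu}$ is purely imaginary, and gets the $\sqrt2$ from $\|1+\delta\HT\|_{\E_\mu\to\E_\mu}=\sqrt{1+\delta^2}$ rather than $1+\delta$).
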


\begin{proof}
By Plancherel's theorem, the Hilbert transform $\HT$ is isometric on $\E_\mu$. Hence, we can define a `twisted' sesquilinear form $\B_{\delta,\sigma}: \E_\mu \times \E_\mu \to \IC$ via
\begin{align}
\label{eq: twisted sesquilinear form}
\begin{split}
 \B_{\delta,\sigma}(u,v)
	&\coloneqq \iint_{\ree} \biggl (\sigma u \cdot \cl{(1+\delta \HT)v} + w^{-1} A \gradx u \cdot \cl{\gradx (1+\delta \HT)v} \\
	&\qquad  \quad +\HT \dhalf u \cdot \cl{\dhalf (1+\delta \HT)v}\biggr) \, \d \mu,
\end{split}
\end{align}
where $\delta \in (0,1)$ is to be chosen.  Clearly $\B_{\delta,\sigma}$ is bounded. Since $\HT$ is skew-adjoint, we have
\begin{align}
\label{eq: skew adjoint}
\Re \iint_\ree \HT v  \cdot \cl{v} \d \mu = 0 \qquad(v \in \L^2_\mu).
\end{align}
Expanding $\B_{\delta,\sigma}(u,u)$ and using the above along with the weighted ellipticity of the coefficients $A$, we find
\begin{align}
\label{eq0: hidden coercivity}
	\Re  \B_{\delta,\sigma}(u,u)
	\geq \delta \|\dhalf u\|_{2,\mu}^2 + (c_1 - c_2\delta)\|\nabla_x u\|_{2,\mu}^2 + (\Re \sigma - \delta |\Im \sigma|) \|u\|_{2,\mu}^2.
\end{align}
Choosing $\delta = \min \bigl(\frac{c_1}{c_2+1}, \frac{\Re \sigma}{ |\Im \sigma|+1}\bigr)$, the factors in front of the second and third term in the last display become no less than $\delta$. Hence, we obtain the coercivity estimate
\begin{align}
\label{eq1: hidden coercivity}
	\Re \B_{\delta,\sigma} (u,u) \geq \min \Big\{\frac{c_1}{c_2 + 1}, \frac{\Re \sigma}{|\Im \sigma| + 1}\Big \} \|u\|_{\E_\mu}^2 \qquad (v \in \E_\mu).
\end{align}
The Lax-Milgram lemma yields, for each $f \in (\E_\mu)^*$,  a unique $u \in {\E_\mu}$ satisfying the estimate claimed in the lemma such that
\begin{align*}
\B_{\delta,\sigma}(u,v) = f((1+\delta \HT)v) \qquad (v \in {\E_\mu}).
\end{align*}
(Note that the additional factor $\sqrt{2}$ is an upper bound for the norm of $1+ \delta \HT$ on $\E_\mu$.)
Plancherel's theorem yields that $1+\delta \HT$ is an isomorphism on ${\E_\mu}$ for all $\delta\in\mathbb R$. Thus,
\begin{align*}
	\iint_{\ree} \sigma u \cdot \cl{v} + w^{-1} A \gradx u \cdot \cl{\gradx v} +\HT \dhalf u \cdot \cl{\dhalf v} \, \d \mu = f(v) \qquad  (v \in {\E_\mu}),
\end{align*}
that is, $(\sigma + \cH) u  = f$ as required.
\end{proof}

The proof above fails for $\delta =0$ since $\Re \B(\cdot, \cdot)$ does not control $\|\dhalf \cdot\|_{2,\mu}$ from above. As a consequence, $\B$ itself is \emph{not} a closed sesquilinear form in the sense of Kato~\cite{Kato}, or equivalently, $(\|\cdot\|_{2,\mu}^2 + \Re B(\cdot,\cdot))^{1/2}$ does \emph{not} define an equivalent norm on $\E_\mu$. In \cite[Lem.~4]{AE}, it has been (essentially) shown that a parabolic analog of Kato's first representation theorem holds nonetheless. For convenience, we include the short argument with some minor improvements in the next section.

\subsection{Maximal accretivity}
Recall that an operator in a Hilbert space such as $\L^2_\mu$ is called \emph{maximal accretive} if it is closed and densely defined, with resolvent estimates
\begin{align*}
	\|(\sigma + \cH)^{-1}\|_{2 \to 2,\mu} \leq (\Re \sigma)^{-1} \qquad (\sigma \in \IC, \Re \sigma > 0).
\end{align*}

\begin{prop}
\label{prop: max accretive}
The part of $\cH$ in $\L^2_\mu$ is maximal accretive and $\dom(\cH)$ is dense in $\E_\mu$.
\end{prop}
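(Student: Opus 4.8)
The goal is to upgrade Lemma~\ref{lem: hidden coercivity} into maximal accretivity of the part of $\cH$ in $\L^2_\mu$, following the pattern of Kato's first representation theorem despite $\B$ not being closed. First I would verify accretivity: for $u \in \dom(\cH)$ we have $\Re\langle \cH u, u\rangle_{2,\mu} = \Re \B(u,u) = \Re \iint w^{-1}A\gradx u \cdot \cl{\gradx u}\,\d\mu + \Re\iint \HT\dhalf u \cdot \cl{\dhalf u}\,\d\mu$, and by weighted ellipticity \eqref{ellip} the first term is $\geq c_1\|\gradx u\|_{2,\mu}^2 \geq 0$, while the second vanishes by skew-adjointness of $\HT$ as in \eqref{eq: skew adjoint}. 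Hence $\Re\langle \cH u, u\rangle_{2,\mu} \geq 0$, so $\cH$ (as the part in $\L^2_\mu$) is accretive, and more generally $\Re\langle(\sigma+\cH)u,u\rangle_{2,\mu} \geq \Re\sigma\,\|u\|_{2,\mu}^2$ for $\Re\sigma > 0$, which gives both injectivity of $\sigma+\cH$ and the a priori bound $\|u\|_{2,\mu} \leq (\Re\sigma)^{-1}\|(\sigma+\cH)u\|_{2,\mu}$ on $\dom(\cH)$.

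Next I would establish that $\sigma+\cH$ maps $\dom(\cH)$ onto $\L^2_\mu$ for every $\sigma$ with $\Re\sigma>0$. Given $f \in \L^2_\mu$, regard $f$ as an element of $(\E_\mu)^*$ via the Gelfand triple \eqref{Gelfand} and apply Lemma~\ref{lem: hidden coercivity} to obtain $u \in \E_\mu$ with $(\sigma+\cH)u = f$ in $(\E_\mu)^*$. Since $f \in \L^2_\mu$, this says precisely that $(\cH u)(v) = \iint (f - \sigma u)\cdot\cl v\,\d\mu$ for all $v\in\E_\mu$, i.e. $\cH u \in \L^2_\mu$, so $u \in \dom(\cH)$ by \eqref{domain} and $(\sigma+\cH)u=f$ in $\L^2_\mu$. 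Combined with the injectivity above, $\sigma+\cH$ is a bijection from $\dom(\cH)$ onto $\L^2_\mu$ with $\|(\sigma+\cH)^{-1}\|_{2\to2,\mu}\leq(\Re\sigma)^{-1}$, which is exactly the resolvent estimate in the definition of maximal accretivity. Taking, say, $\sigma=1$ shows $\cH$ is closed (its inverse $(1+\cH)^{-1}$ is a bounded operator on $\L^2_\mu$, hence $1+\cH$ and therefore $\cH$ is closed).

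For density of $\dom(\cH)$ in $\L^2_\mu$ — needed for ``densely defined'' — I would note that $\C_0^\infty(\ree)$ is dense in $\E_\mu$ (Lemma~\ref{lem: energy approximation}(i)), and $\E_\mu$ is dense in $\L^2_\mu$ by \eqref{Gelfand}; it then suffices to show $\C_0^\infty(\ree) \subset \dom(\cH)$, or at least that $\dom(\cH)$ contains a subspace dense in $\L^2_\mu$. Actually the cleanest route is: since $(1+\cH)^{-1}$ is bounded and surjective onto $\dom(\cH)$, and $\L^2_\mu = \ran(1+\cH)$ trivially contains the dense set $\E_\mu$, the preimages $(1+\cH)^{-1}\E_\mu \subset \dom(\cH)$; but more directly, for any $g\in\L^2_\mu$ the element $(1+\cH)^{-1}g \in \dom(\cH)$ and these exhaust $\dom(\cH)$, so density of $\dom(\cH)$ in $\L^2_\mu$ follows once we know $(1+\cH)^{-1}$ has dense range in $\L^2_\mu$ for the graph topology — equivalently, approximate a given $h\in\L^2_\mu$ by $h_\lambda := (1+\lambda\cH)^{-1}h$. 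By the resolvent bound, $\|h_\lambda\|_{2,\mu}\leq\|h\|_{2,\mu}$, and a standard computation using Lemma~\ref{lem: hidden coercivity} gives $\|h_\lambda - h\|_{2,\mu}\to 0$ as $\lambda\to0^+$ first for $h\in\E_\mu$ (where one controls $\B_{\delta}$-type quantities) and then for general $h\in\L^2_\mu$ by density and uniform boundedness. For density of $\dom(\cH)$ in $\E_\mu$, the same approximation $h_\lambda\to h$ should be shown to hold in the $\E_\mu$-norm when $h\in\E_\mu$, using the coercivity estimate \eqref{eq1: hidden coercivity} to bound $\|h_\lambda\|_{\E_\mu}$ uniformly and extract convergence.

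\textbf{Main obstacle.} The routine part is accretivity and surjectivity; the delicate point is the two density statements, especially density of $\dom(\cH)$ in $\E_\mu$ (not just in $\L^2_\mu$). One cannot simply invoke Kato's representation theorem because $\B$ is not closed, so one must run the approximation $h_\lambda = (1+\lambda\cH)^{-1}h \to h$ by hand and verify convergence in the $\E_\mu$-norm, which requires a uniform $\E_\mu$-bound on $h_\lambda$ obtained from the twisted-form coercivity \eqref{eq1: hidden coercivity} applied with $\sigma = \lambda^{-1}$ (so that the constant stays under control as $\lambda\to0$), together with a weak-compactness or Cauchy argument to identify the limit. This is precisely the ``minor improvement'' over \cite[Lem.~4]{AE} alluded to in the text.
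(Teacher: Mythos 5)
Your argument for accretivity, surjectivity via Lemma~\ref{lem: hidden coercivity} and the Gelfand triple, the resolvent bound, and closedness is exactly the paper's, and is correct. Where you diverge is in the two density claims. For density of $\dom(\cH)$ in $\L^2_\mu$ the paper simply quotes the general fact that a closed operator on a reflexive space with uniform resolvent bounds on $(0,\infty)$ is automatically densely defined (\cite[Prop.~2.1.1]{Haase}); your hands-on approximation $h_\lambda \to h$ in $\L^2_\mu$ also works (for $h \in \E_\mu$ one gets $\|h_\lambda - h\|_{2,\mu} \lesssim \lambda \|\ID h\|_{2,\mu}$ from the analogues of Lemma~\ref{le8-}(ii),(iii), then one concludes by density and uniform boundedness). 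For density of $\dom(\cH)$ in $\E_\mu$ the paper does something slicker and entirely different: it takes $v \in \E_\mu$ orthogonal to $\dom(\cH)$, represents $\langle \cdot, v\rangle_{\E_\mu}$ as $B_{\delta,1}(\cdot,w)$ by Lax--Milgram applied to the twisted form, deduces $0 = \langle (1+\cH)u, (1+\delta\HT)w\rangle_{2,\mu}$ for all $u \in \dom(\cH)$, and uses surjectivity of $1+\cH$ to force $w=0$, hence $v=0$. Your route via $h_\lambda = (1+\lambda\cH)^{-1}h$ can be made to work, but be aware that the estimates available (Lemma~\ref{le8-}) only give $\|\ID(h_\lambda - h)\|_{2,\mu} \lesssim \|\ID h\|_{2,\mu}$, i.e.\ a uniform $\E_\mu$-bound, not norm convergence; so you should not aim for a Cauchy argument but rather extract a weakly convergent subsequence in $\E_\mu$, identify the weak limit as $h$ using the strong $\L^2_\mu$-convergence, and then invoke the fact that a subspace is norm-closed iff weakly closed (Mazur) to conclude density. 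With that step made explicit your proof is complete; the paper's orthogonality argument buys you a shorter path that avoids weak compactness altogether, while yours has the advantage of exhibiting an explicit approximating sequence in $\dom(\cH)$.
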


\begin{proof}
Fix $\sigma \in \IC$ with $\Re \sigma > 0$. Lemma~\ref{lem: hidden coercivity} yields that $\sigma + \cH: \dom(\cH) \to \L^2_\mu$ is bijective. Given $f \in \L^2_\mu$, we set $u \coloneqq (\sigma + \cH)^{-1} f$ and use ellipticity of the coefficients $A$ and \eqref{eq: skew adjoint} to deduce
\begin{align*}
	\Re \sigma \|u\|_{2,\mu}^2
	&\leq \Re \iint_{\R^{n+1}} \sigma u \cdot \cl{u} + w^{-1} A\nabla_x u \cdot \cl{\nabla_x u} + \HT \dhalf u \cdot \cl{\dhalf  u}\, \d \mu \\
	&= \Re \iint_{\R^{n+1}} f \cdot\cl{u}\,\d \mu \leq \|f\|_{2,\mu} \|u\|_{2,\mu}.
\end{align*}
This gives the required resolvent bound $\|u\|_{2,\mu} \leq (\Re \sigma)^{-1} \|f\|_{2,\mu}$. Moreover, the part of $\cH$ in $\L^2_\mu$ is closed since it has non-empty resolvent set, and the resolvent estimates for $\sigma > 0$ imply dense domain~\cite[Prop.~2.1.1]{Haase}. This proves maximal accretivity.

In order to prove that $\dom(\cH)$ is dense in $\E_{\mu}$, we use the sesquilinear form $B_{\delta,1}$ in \eqref{eq: twisted sesquilinear form} with  $\delta > 0$ chosen as in the proof of that lemma. Suppose $v \in \E_\mu$ is orthogonal to $\dom(\cH)$ in $\E_\mu$. By the Lax-Milgram lemma there is $w \in \E_\mu$ such that $\langle u,v \rangle_{\E_\mu} = B_{\delta,1}(u,w)$ for all $u \in \E_\mu$. For $u \in \dom(H)$ this identity becomes $0 = \langle (1+ \cH)u, (1+ \delta \HT)w \rangle_{2,\mu}$ and since $1+ \cH: \dom(\cH) \to \L_\mu^2$ is bijective, we conclude that $(1+ \delta \HT)w = 0$. Thus, we have $w=0$ and therefore also $v=0$.
\end{proof}

The adjoint $\cH^*$ of $\cH$ (seen as either a bounded operator $\E_\mu \to (\E_\mu)^*$ or an unbounded operator in $\L^2_\mu$) has the same properties as $\cH$. Indeed, it can be checked by the very definition that it is associated with the sesquilinear form
\begin{align*}
	B^*(u,v) = \cl{B(v,u)}
\end{align*}
and that it formally corresponds to the backward-in-time operator
\begin{align*}
	-\partial_t -w^{-1}(x)\div_{x} (A^*(x, t)\nabla_{x}).
\end{align*}
Here $A^*$ is the conjugate transpose of $A$.

\subsection{Resolvent estimates} Using Proposition \ref{prop: max accretive}, we see that for $\lambda > 0$ the resolvent operators
\begin{equation}
\begin{aligned}
\label{resolvents}
	\mathcal{E}_\lambda&:=(I+\lambda^2\cH)^{-1}, \\
	\mathcal{E}_\lambda^\ast&:=(I+\lambda^2\cH^\ast)^{-1}
 \end{aligned}
\end{equation}
are well-defined as bounded operators $\L^2_\mu \to \L^2_\mu$ and $(\E_\mu)^* \to \E_\mu$. Moreover, they are adjoints of each other.

\begin{lem}
\label{le8-}
The following resolvent estimates hold uniformly for all $\lambda>0$, all $f\in \L^2_\mu$ and all $\mathbf{f}\in (\L^2_\mu)^n$:
\begin{align*}
		\mathrm{(i)}&\ \|\mathcal{E}_\lambda f\|_{2,\mu}+\|\lambda {\mathbb D}\mathcal{E}_\lambda f\|_{2,\mu}\lesssim \| f\|_{2,\mu},\notag\\
		\mathrm{(ii)}&\  \|\lambda \mathcal{E}_\lambda \dhalf f \|_{2,\mu} + \|\lambda^2 {\mathbb D}\mathcal{E}_\lambda \dhalf f \|_{2,\mu}\lesssim \|{f}\|_{2,\mu},\notag\\
		\mathrm{(iii)}&\  \|\lambda \mathcal{E}_\lambda w^{-1}\div_x(w\mathbf{f})\|_{2,\mu} + \|\lambda^2{\mathbb D} \mathcal{E}_\lambda w^{-1}\div_x(w\mathbf{f}) \|_{2,\mu}\lesssim \|\mathbf{f}\|_{2,\mu}.
\end{align*}
The same estimates hold with $\mathcal{E}_\lambda$ replaced by $\mathcal{E}_\lambda^\ast$.
\end{lem}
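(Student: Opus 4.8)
The plan is to prove all three estimates, together with their counterparts for $\mathcal{E}_\lambda^*$, by a single test-function computation that reruns the hidden-coercivity argument behind Lemma~\ref{lem: hidden coercivity}. I fix the structural constant $\delta \coloneqq c_1/(c_2+1) \in (0,1)$, which is the choice made there for $\sigma=1$, and set $g \coloneqq f$ in case~(i), $g \coloneqq \dhalf f$ (with $f \in \L^2_\mu$) in case~(ii), and $g \coloneqq w^{-1}\div_x(w\mathbf{f})$ (with $\mathbf{f}\in(\L^2_\mu)^n$) in case~(iii). By \eqref{Estar differentials}, $g \in (\E_\mu)^*$ in every case, hence $u \coloneqq \mathcal{E}_\lambda g \in \E_\mu$ by the mapping properties of $\mathcal{E}_\lambda$ recorded after \eqref{resolvents}, and $(I+\lambda^2\cH)u = g$ in $(\E_\mu)^*$. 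Since $\HT$ maps $\E_\mu$ isometrically onto itself, $(1+\delta\HT)u$ is an admissible test function, and evaluating the identity $(I+\lambda^2\cH)u = g$ on it gives
\begin{align*}
	\langle u,(1+\delta\HT)u\rangle_{2,\mu} + \lambda^2\B\big(u,(1+\delta\HT)u\big) = g\big((1+\delta\HT)u\big).
\end{align*}

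For the left-hand side I would take real parts and repeat verbatim the computation leading to \eqref{eq0: hidden coercivity}, using \eqref{eq: skew adjoint}, the facts that $\HT$ is isometric on $\L^2_\mu$ and commutes with $\gradx$ and $\dhalf$, and the weighted ellipticity \eqref{ellip}; the only difference is that the role of the $\sigma u$-term is now played by $u$ itself, while the two `derivative' terms come with an extra factor $\lambda^2$. This produces
\begin{align*}
	\Re\Big[\langle u,(1+\delta\HT)u\rangle_{2,\mu} + \lambda^2\B\big(u,(1+\delta\HT)u\big)\Big] \ge \|u\|_{2,\mu}^2 + \lambda^2\delta\,\|\ID u\|_{2,\mu}^2.
\end{align*}
The crux of the matter is to \emph{avoid} bounding the right-hand side by $\|g\|_{(\E_\mu)^*}\|(1+\delta\HT)u\|_{\E_\mu}$, which is too lossy, and to unfold $g$ instead. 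Since $\|1+\delta\HT\|_{2\to2,\mu}\le\sqrt2$ and $1+\delta\HT$ commutes with $\gradx$ and $\dhalf$, one gets $g((1+\delta\HT)u)=\langle f,(1+\delta\HT)u\rangle_{2,\mu}\le\sqrt2\,\|f\|_{2,\mu}\|u\|_{2,\mu}$ in case~(i); using that the operator $\dhalf$ in \eqref{Estar differentials} is the adjoint of $\dhalf\colon\E_\mu\to\L^2_\mu$, one gets $g((1+\delta\HT)u)=\langle f,(1+\delta\HT)\dhalf u\rangle_{2,\mu}\le\sqrt2\,\|f\|_{2,\mu}\|\ID u\|_{2,\mu}$ in case~(ii); and using that $w^{-1}\div_x w$ in \eqref{Estar differentials} is the adjoint of $\gradx\colon\E_\mu\to(\L^2_\mu)^n$, one gets $|g((1+\delta\HT)u)|=|\langle\mathbf{f},(1+\delta\HT)\gradx u\rangle_{2,\mu}|\le\sqrt2\,\|\mathbf{f}\|_{2,\mu}\|\ID u\|_{2,\mu}$ in case~(iii).

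Combining the two bounds finishes everything. In case~(i), $\|u\|_{2,\mu}^2 + \lambda^2\delta\|\ID u\|_{2,\mu}^2 \le \sqrt2\,\|f\|_{2,\mu}\|u\|_{2,\mu}$, and Young's inequality absorbs $\|u\|_{2,\mu}^2$ into the left-hand side, giving $\|u\|_{2,\mu}+\lambda\|\ID u\|_{2,\mu}\lesssim\|f\|_{2,\mu}$. In cases~(ii) and~(iii), writing $h$ for $f$ resp.\ $\mathbf{f}$, one has $\|u\|_{2,\mu}^2+\lambda^2\delta\|\ID u\|_{2,\mu}^2\le\sqrt2\,\|h\|_{2,\mu}\|\ID u\|_{2,\mu}$; dropping the first term and dividing yields $\lambda^2\|\ID u\|_{2,\mu}\lesssim\|h\|_{2,\mu}$, and re-inserting this into the first term yields $\lambda\|u\|_{2,\mu}\lesssim\|h\|_{2,\mu}$, which is precisely~(ii) and~(iii). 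The statements for $\mathcal{E}_\lambda^*$ follow by the identical argument, because $\cH^*$ is associated with the form $B^*$ having the same structure with $A^*$ (whose ellipticity constants are again $c_1,c_2$) in place of $A$ and $-\HT$ in place of $\HT$; one simply tests against $(1-\delta\HT)u$. I expect no serious obstacle here: the only point requiring care is the bookkeeping of the powers of $\lambda$ on the right-hand side — in~(ii) and~(iii) the right-hand side must be controlled by $\|\ID u\|_{2,\mu}$ alone, and this gain of one full derivative is what creates the extra factor of $\lambda$ — while everything else is a mechanical rerun of the coercivity estimate from Lemma~\ref{lem: hidden coercivity}.
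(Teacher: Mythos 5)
Your proof is correct and follows essentially the same route as the paper: the hidden-coercivity estimate with the twisted test function $(1+\delta \HT)u$, combined with unfolding the functional $g$ through the adjoint relations \eqref{Estar differentials} so that the right-hand side is controlled by $\|u\|_{2,\mu}$ in case (i) and by $\|\ID u\|_{2,\mu}$ in cases (ii) and (iii), which is exactly what produces the extra powers of $\lambda$. The only cosmetic difference is that the paper obtains the zeroth-order terms in (ii) and (iii) by duality from (i) applied to $\mathcal{E}_\lambda^*$, whereas you read them off directly by re-inserting the gradient bound into the coercivity inequality; both are valid.
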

\begin{proof}
We first prove (i). Setting $u \coloneqq (\lambda^{-2} + \cH)^{-1}f$, we have $\cE_\lambda f = \lambda^{-2} u$ and by maximal accretivity we obtain
\begin{align*}
	\|\cE_\lambda f\|_{2,\mu} \leq \|f\|_{2,\mu}.
\end{align*}
Next, we use the twisted sesquilinear form $B_{\delta,\sigma}$ as in \eqref{eq: twisted sesquilinear form} with parameter $\sigma = \lambda^{-2}$, so that by construction
\begin{align}
\label{eq1: le8-}
	B_{\delta,\sigma}(u, u) = \langle f, (1+\delta \HT)u \rangle_{2,\mu}.
\end{align}
With this choice for $\sigma$, we pick $\delta = c_1/(2c_2)$, use \eqref{eq0: hidden coercivity} on the left, and Cauchy--Schwarz on the right, in order to obtain
\begin{align*}
	\|\ID u\|_{2,\mu}^2 \lesssim \|f\|_{2,\mu} \|u\|_{2,\mu} \leq \lambda^2 \|f\|_{2,\mu}^2.
\end{align*}
This is the required uniform bound for $\lambda {\mathbb D}\mathcal{E}_\lambda f$. Since $\cH$ is of the same type as $\cH^*$ from the point of view of sesquilinear forms, the same estimates also hold for $\cE_\lambda^*$ in place of $\cE_\lambda$.

Next, we note that the estimates for the leftmost terms in (ii) and (iii) follow by duality from (i) applied to $\cE_\lambda^*$.

In order to estimate the second term on the left in (ii), we set $u \coloneqq (\lambda^{-2} + \cH)^{-1} \dhalf f$. Since $\dhalf f$ is now regarded as an element in $(\E_\mu)^*$, we get $\langle f, \dhalf  (1+\delta \HT)u \rangle_{2,\mu}$ on the right-hand side in \eqref{eq1: le8-}, and from this we conclude
\begin{align*}
 \|\ID u\|_{2,\mu}^2 \lesssim \|f\|_{2,\mu} \|\ID u\|_{2,\mu}
\end{align*}
as required. The remaining term in (iii) is estimated in the same way upon replacing $\dhalf f$ by $w^{-1} \divx(w \mathbf{f})$.
\end{proof}

\subsection{Off-diagonal estimates}
\label{subsec: OD} Given measurable subsets $E,F$ of $\ree$, we let $$ d{(E,F)}:= \inf\{\|(x-y,t-s)\|: (x,t) \in E, (y,s) \in F\}$$ denote their parabolic distance. Lemma~\ref{le8-+} below is an improvement of the uniform bounds in Lemma~\ref{le8-}. We only state and prove Lemma \ref{le8-+} for the families of operators we need it for later on. However, let us stress that such estimates are not to be expected in the presence of the non-local operator $\dhalf$ and one of the insights in \cite{AEN} was that in this case a non-local version of off-diagonal bounds should be used.

\begin{lem}\label{le8-+}
Assume that $E,F$ are measurable subsets of $\mathbb{R}^{n+1}$ and let $d \coloneqq d(E,F).$ Then there exists a constant $c \in (0,\infty)$, depending only on the structural constants, such that
\begin{align*}
	\mathrm{(i)}&\ \iint_{F} |\mathcal{E}_\lambda f|^2+|\lambda \nabla_x \mathcal{E}_\lambda f|^2\, \d\mu \lesssim  e^{-\frac{d}{c\lambda}} \iint_{E} |f|^2\,\d\mu,\notag\\
	\mathrm{(ii)}&\  \iint_{F}|\lambda\mathcal{E}_\lambda w^{-1}\div_x(w\mathbf{f})|^2 \, \d\mu \lesssim e^{-\frac{d}{c\lambda}} \iint_{E} |\mathbf{f}|^2\, \d \mu,
\end{align*}
for all $\lambda > 0$ and all $f \in \L^2_{\mu}$, $\mathbf{f} \in (\L^2_{\mu})^n$ with support in $E$. The same statements are true when $\mathcal{E}_\lambda$ is replaced by $\mathcal{E}_\lambda^\ast$.
\end{lem}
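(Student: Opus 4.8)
\medskip

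The plan is to run the classical Davies--Gaffney / Agmon energy trick, exploiting that -- even though $\dhalf$ is non-local -- the \emph{off-diagonal cut-off} only ever has to interact with the \emph{local} operators $\nabla_x$ and $w^{-1}\divx(w\,\cdot)$ in the factorization \eqref{correct H factorization}, while the time-part $\dhalf\HT\dhalf$ can be kept on the "good" side of an integration by parts. Concretely, fix disjoint $E,F$ with $d=d(E,F)>0$ and $f$ supported in $E$. Set $u\coloneqq\cE_\lambda f=(1+\lambda^2\cH)^{-1}f$, so that $u\in\E_\mu$ and $u+\lambda^2\cH u=f$ in $(\E_\mu)^*$. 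The weight of the exponential will come from testing against $\varphi^2\bar u$, where $\varphi=\varphi_\alpha$ is a real-valued Lipschitz function with $\varphi\equiv 0$ on a neighbourhood of $E$, $|\gradx\varphi|\le \alpha/\lambda$, $|\partial_t\varphi|\le (\alpha/\lambda)^2$ (parabolic scaling!), and such that $\varphi\gtrsim \e^{d/(c\lambda)}$ on $F$ for a parameter $\alpha$ to be optimized; such a $\varphi$ exists as a function of the parabolic distance to $E$. The key structural point is that $\varphi^2\bar u\in\E_\mu$ by Lemma~\ref{lem: energy approximation}(ii) (after the obvious density reduction to $u\in\C_0^\infty$, or by a truncation of $\varphi$), so it is a legitimate test function.

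\medskip

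Next I would expand the weak formulation $\langle u,\varphi^2 u\rangle_{2,\mu}+\lambda^2\B(u,\varphi^2 u)=\langle f,\varphi^2 u\rangle_{2,\mu}=0$, using $\varphi\equiv0$ near $\supp f$. Taking real parts and using weighted ellipticity \eqref{ellip} gives
\begin{align*}
	\|\varphi u\|_{2,\mu}^2 + \lambda^2 c_1\|\varphi\gradx u\|_{2,\mu}^2
	&\le \lambda^2\Big|\iint_{\ree} w^{-1}A\gradx u\cdot\cl{(2\varphi\gradx\varphi)\,u}\,\d\mu\Big|\\
	&\qquad + \lambda^2\Big|\Re\iint_{\ree}\HT\dhalf u\cdot\cl{\dhalf(\varphi^2 u)}\,\d\mu\Big|.
\end{align*}
The first error term is handled by Cauchy--Schwarz and absorbing $\lambda^2 c_1\|\varphi\gradx u\|_{2,\mu}^2/2$, leaving a term $\lesssim\alpha^2\|\varphi u\|_{2,\mu}^2$. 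For the time term the trick is to commute: write $\dhalf(\varphi^2 u)=\varphi^2\dhalf u+[\dhalf,\varphi^2]u$ only \emph{formally}, and instead work in Fourier variables, or better, use the difference-quotient identity \eqref{difference quotient formula} together with the product-rule splitting already exploited in \eqref{eq1: energy approximation}. This produces, after using $|\partial_t\varphi|\le(\alpha/\lambda)^2$ and that $\HT$ is bounded on $\L^2_\mu$, a bound $\lesssim \alpha^2\|\varphi u\|_{2,\mu}^2 + (\text{a term with }\lambda^2\|\varphi\dhalf u\|_{2,\mu}^2$ that must be absorbed). Since $\Re\langle\HT\dhalf u,\dhalf(\varphi^2u)\rangle$ has no definite sign, one does \emph{not} get coercivity in $\dhalf u$ here — but one does not need it: all that is needed is that the time term is $\lesssim\alpha^2\|\varphi u\|_{2,\mu}^2$ plus the \emph{already controlled} quantity $\lambda^2\|\varphi\gradx u\|_{2,\mu}^2$ (times a small constant) — and this is exactly the point where "separating time and space" pays off. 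Optimizing the inequality $\|\varphi u\|_{2,\mu}^2\lesssim\alpha^2\|\varphi u\|_{2,\mu}^2$ over $\alpha$ (taking $\alpha$ a small structural constant) forces $\|\varphi u\|_{2,\mu}^2\le 0$ unless we keep the diagonal block, and reinserting $\varphi\gtrsim\e^{d/(c\lambda)}$ on $F$, $\varphi\equiv0$ on $E$ yields $\iint_F|u|^2\,\d\mu + \lambda^2\iint_F|\gradx u|^2\,\d\mu\lesssim \e^{-d/(c\lambda)}\iint_E|f|^2\,\d\mu$, which is (i). Statement (ii) follows by the same computation with $f$ replaced by $w^{-1}\divx(w\mathbf f)$: now the right-hand side of the weak formulation is $-\lambda^2\iint w^{-1}A^{-1}\cdots$ — more precisely one gets $\langle \mathbf f,\gradx(\varphi^2 u)\rangle_{2,\mu}$, and again $\varphi\equiv0$ on $E\supset\supp\mathbf f$ only after moving a derivative, so one instead writes this as $\iint_{\ree}\mathbf f\cdot\cl{\varphi^2\gradx u + 2\varphi\gradx\varphi\,u}\,\d\mu$ and estimates by Cauchy--Schwarz against the left-hand side; the extra factor of $\lambda$ versus $\lambda^2$ in the statement is just the scaling bookkeeping. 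The statements for $\cE_\lambda^*$ are identical since $\cH^*$ is of the same type.

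\medskip

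The main obstacle I anticipate is the rigorous treatment of the time term $\Re\langle\HT\dhalf u,\dhalf(\varphi^2 u)\rangle_{2,\mu}$: $\dhalf$ does not obey an exact Leibniz rule, so the commutator $[\dhalf,\varphi^2]$ is a genuinely non-local object, and one must show it is \emph{bounded} from $\L^2_\mu$ to $\L^2_\mu$ with norm $\lesssim\|\partial_t(\varphi^2)\|_\infty^{1/2}\|\varphi^2\|_\infty^{1/2}\lesssim(\alpha/\lambda)\|\varphi\|_\infty$ — which is exactly the content of the third estimate in Lemma~\ref{lem: energy approximation}(ii), applied with $\eta=\varphi^2$. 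So this obstacle is already resolved by that lemma: the non-locality of $\dhalf$ is confined to a commutator estimate that does \emph{not} require any off-diagonal decay of $\dhalf$ itself, only the crude $\L^2_\mu$-bound. This is precisely the simplification over \cite{AEN} advertised in the introduction, and it is why Lemma~\ref{le8-+} can be stated and proved with the honest parabolic exponential $\e^{-d/(c\lambda)}$ rather than the polynomial non-local tails used there. A secondary, purely technical point is the density reduction allowing one to test with $\varphi^2 u$ (truncate $\varphi$ to be compactly supported, then remove the truncation by monotone/dominated convergence using Lemma~\ref{lem: energy approximation}); I would dispatch this in one sentence.
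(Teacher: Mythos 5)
Your overall strategy (an exponentially weighted Caccioppoli/Davies--Gaffney argument with a weight adapted to the parabolic distance) is the same as the paper's, and your treatment of the elliptic term and of part (ii) is fine in outline. But the one genuinely delicate step --- the time term --- is handled incorrectly, and the error is not repairable along the lines you propose. You split $\dhalf(\varphi^2u)=\varphi^2\dhalf u+[\dhalf,\varphi^2]u$ and invoke the commutator bound implicit in Lemma~\ref{lem: energy approximation}(ii). That bound is a \emph{global} $\L^2_\mu\to\L^2_\mu$ estimate with constant $\lesssim\|\varphi^2\|_\infty^{1/2}\|\partial_t(\varphi^2)\|_\infty^{1/2}\sim(\alpha/\lambda)\,\|\varphi\|_\infty^2$, and $\|\varphi\|_\infty\sim\e^{\alpha d/\lambda}$ is precisely the large quantity you are trying to beat. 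Feeding this into $\lambda^2|\langle\HT\dhalf u,[\dhalf,\varphi^2]u\rangle_{2,\mu}|$ together with $\|\dhalf u\|_{2,\mu}\lesssim\lambda^{-1}\|f\|_{2,\mu}$ produces an error of size $\alpha\,\e^{2\alpha d/\lambda}\|f\|_{2,\mu}^2$, while the left-hand side only controls $\e^{2\alpha d/\lambda}\iint_F|u|^2\,\d\mu$: the exponentials cancel exactly and no decay survives. In an Agmon-type argument the error must carry the weight \emph{pointwise} (as in $\iint|u|^2|\partial_t(\varphi^2)|\,\d\mu$ with $|\partial_t(\varphi^2)|\lesssim\alpha\lambda^{-2}\varphi^2$ pointwise), not through its sup-norm. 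The remaining piece $\lambda^2\Re\langle\HT\dhalf u,\varphi^2\dhalf u\rangle_{2,\mu}$ is equally problematic: it does not vanish (because $\HT$ and $\varphi^2$ do not commute, only $\Re\langle\HT g,g\rangle=0$ is available), it has no sign, and there is no $\lambda^2\|\varphi\dhalf u\|_{2,\mu}^2$ or comparable quantity on the left to absorb it into; your claim that it is controlled by $\alpha^2\|\varphi u\|_{2,\mu}^2$ plus $\lambda^2\|\varphi\gradx u\|_{2,\mu}^2$ is asserted but has no supporting mechanism.

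The paper's proof avoids the commutator entirely by never separating the two half-order derivatives: since $\dhalf$ is symmetric in $t$, one has (after approximating $u$ in $\E_\mu$ by smooth compactly supported functions, Lemma~\ref{lem: energy approximation})
\begin{align*}
\Re\iint_{\ree}\HT\dhalf u\cdot\cl{\dhalf(u\eta^2)}\,\d\mu
=\Re\iint_{\ree}\partial_t u\cdot\cl{u}\,\eta^2\,\d\mu
=-\tfrac12\iint_{\ree}|u|^2\,\partial_t(\eta^2)\,\d\mu,
\end{align*}
an \emph{exact identity} in which the weight's time derivative appears pointwise against $|u|^2$. With the parabolic scaling $|\partial_t\eta|\lesssim\alpha\lambda^{-2}|\eta+1|$ this term is then absorbed exactly like the spatial error term. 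This re-grouping of $\dhalf\HT\dhalf$ into $\partial_t$ is the whole point of the lemma (and of the ``separation of time and space'' advertised in the introduction); without it, the exponential off-diagonal decay in the presence of the non-local $\dhalf$ is not obtained. Two smaller remarks: you should first reduce to $\lambda\le\alpha d$ (otherwise the exponential is comparable to $1$ and Lemma~\ref{le8-} already suffices), and the paper obtains (ii) by duality from (i) applied to $\cE_\lambda^*$, which is cleaner than re-running the energy argument with a divergence-form datum.
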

\begin{proof}
As in the  proof of Lemma \ref{le8-}, it suffices to treat $\cE_\lambda$. Based on Lemma $\ref{le8-}$, we see that it suffices to obtain the exponential estimate for $\lambda \leq \alpha d$, where for now $\alpha \in (0,1)$ is a degree of freedom that will be determined later and which will only depend on the structural constants.

Let $u \coloneqq \mathcal{E}_{\lambda} f$ and recall that
\begin{align}
	\label{eq1: le8-+}
	\iint_{\mathbb{R}^{n+1}}  u \, \cl{v} + \lambda^2 w^{-1}A \nabla_x u \cdot \cl{\nabla_x v} + \lambda^2 \HT\dhalf u \cdot \cl{\dhalf v}\, \d \mu = \iint_{\mathbb{R}^{n+1}} f \cdot \cl{v}\,   \d \mu
\end{align}
for all $v \in \E_\mu$. We can pick a real-valued $\tilde{\eta}\in \C^{\infty}(\mathbb{R}^{n+1})$ such that $\tilde{\eta} =1$ on $F$, $\tilde{\eta} = 0$ on $E$, and such that
$$d|\gradx \tilde{\eta}| + d^2|\partial_t \tilde{\eta}|  \leq c$$
for some constant $c$ only depending on $n$. The different scaling in the two terms is due to the definition of the parabolic distance. Next, we let
\begin{align}
	\label{eq2: le8-+}
	v \coloneqq u \eta^2 \quad \text{with} \quad \eta \coloneqq e^{\frac{\alpha d}{\lambda} \tilde{\eta}}-1.
\end{align}

For this choice of $v$, we rewrite the real part in \eqref{eq1: le8-+} of the pairing containing half-order derivatives as follows. According to Lemma~\ref{lem: energy approximation}, there exists a sequence $\{u_i\} \subset \C^{\infty}_0(\ree)$ such that $u_i \to u$ in $\E_\mu$ as $i \to \infty$. By the same lemma, $\eta^2 u_i \to \eta^2u$ in $\E_\mu$ and therefore
\begin{align*}
	\Re \iint_{\ree}  \HT\dhalf u \cdot \cl{\dhalf v}\, \d \mu
	& = \Re \lim_{i \to \infty} \iint_{\ree} \HT\dhalf u_i \cdot \cl{\dhalf (u_i \eta^2) }\, \d t \d w\\
	& =\lim_{i \to \infty} \Re \iint_{\ree} \partial_t u_i \cdot \cl{ u_i \eta^2 }\, \d t \d w\\
	& = \frac{1}{2} \lim_{i \to \infty} \iint_{\ree}  \partial_t |u_i|^2 \cdot \eta^2\, \d t \d w \\
	& = \frac{1}{2}\lim_{i \to \infty}-  \iint_{\ree}  |u_i|^2 \cdot \partial_t (\eta^2)\, \d t \d w\\
	&=- \frac{1}{2} \iint_{\ree} |u|^2 \cdot \partial_t (\eta^2)\, \d \mu.
\end{align*}
Going back to \eqref{eq1: le8-+} and using that $\eta = 0$ on $E$, we conclude that
\begin{align*}
	\Re \iint_{\mathbb{R}^{n+1}} |u|^2 \, \eta^2\, \d \mu + \lambda^2 w^{-1}A \nabla_x u \cdot \cl{\nabla_x (u \eta^2)} - \frac{\lambda^2}{2} |u|^2 \partial_t (\eta^2)\, \d \mu = 0.
\end{align*}
Using this identity and ellipticity, we deduce
\begin{align*}
	&\iint_{\ree}  |u|^2 \, \eta^2\, \d \mu + c_1 \lambda^2 \iint_{\ree} |\nabla_x u|^2 \, \eta^2\, \d \mu \\
	& \leq  \lambda^2 \iint_{\ree} |u|^2 \,|\eta| \,|\partial_t \eta| \, \d \mu + 2c_2 \lambda^2 \iint_{\ree}  |u| \, |\nabla_x u|\, |\eta|\, |\nabla_x \eta|\, \d \mu \\
	& \leq \frac{1}{2} \iint_{\ree} |u|^2 \, \eta^2 \, \d \mu + \frac{1}{2}\lambda^4 \iint_{\ree} |u|^2\, |\partial_t \eta|^2\, \d \mu\\
	&\quad+\frac{1}{2} c_1 \lambda^2  \iint_{\ree} |\nabla_x u|^2 \, \eta^2\, \d \mu+ 2 \frac{c_{2}^2}{c_1} \lambda^2 \iint_{\ree} |u|^2 \, |\nabla_x \eta|^2\, \d \mu.
\end{align*}
In conclusion,
\begin{align*}
	\iint_{\ree}  |u|^2 \, \eta^2\, \d \mu + c_1 \lambda^2 \iint_{\ree} |\nabla_x u|^2 \, \eta^2\, \d \mu
	\leq  \iint_{\ree}  |u|^2 \biggl(\lambda^4 |\partial_t \eta|^2 + 4\frac{c_2^2}{c_1} \lambda^2 |\nabla_x \eta|^2\biggr)\, \d \mu.
\end{align*}
By the definition of $\eta$ in \eqref{eq2: le8-+}, and since $\lambda \leq  \alpha d \leq d$, we see that
\begin{align*}
	|\partial_t \eta|^2 \leq  \frac{\alpha^2 d^2}{\lambda^2} |\eta+1|^2 \frac{c^2}{d^4} \leq  c^2 \alpha^2 \lambda^{-4} |\eta+1|^2
\end{align*}
and
\begin{align*}
	|\gradx \eta|^2 \leq  \frac{\alpha^2 d^2}{\lambda^2} |\eta+1|^2 \frac{c^2}{d^2} =   c^2 \alpha^2 \lambda^{-2} |\eta+1|^2.
\end{align*}
Thus, we get
\begin{align*}
	\iint_{\ree} |u|^2 \, \eta^2\, \d \mu + c_1 \lambda^2 \iint_{\ree} |\nabla_x u|^2 \, \eta^2\, \d \mu \lesssim  \alpha^2 \iint_{\ree} |u|^2 |\eta+1|^2\, \d \mu.
\end{align*}
At this point, we make our choice of $\alpha$. Indeed, using the bound $|\eta +1|^2 \leq 2(\eta^2 + 1)$, we choose $\alpha$ small enough to be able to absorb the part coming from $\eta$ into the left-hand side. The conclusion is that
\begin{align*}
	\iint_{\ree} |u|^2 \, \eta^2\, \d \mu + \lambda^2 \iint_{\ree} |\nabla_x u|^2 \, \eta^2\, \d \mu \lesssim \iint_{\ree} |u|^2\, \d \mu.
\end{align*}
On the right-hand side, we can use Lemma \ref{le8-} (i), and, on the left-hand side, we exploit that on $F$ we have $$\eta =e^{\frac{\alpha  d}{\lambda}}-1 \geq \frac{1}{2} e^{\frac{\alpha  d}{\lambda}},$$ since we are assuming $\lambda \leq \alpha d$. Consequently,
\begin{align*}
	& e^{\frac{2 \alpha  d}{\lambda}} \iint_{F} |u|^2\, \d \mu  +  e^{\frac{2 \alpha  d}{\lambda}}\iint_{F} |\lambda \nabla_x u|^2 \, \d \mu \lesssim \iint_{E}  |f|^2\, \d \mu,
\end{align*}
which proves (i).

The inequality in (ii) follows by a duality argument, using (i) for $\mathcal{E}_{\lambda}^*$ and  interchanging the roles of $E$ and $F$. In fact,
\begin{align*}
	\iint_{F}|\lambda\mathcal{E}_\lambda w^{-1}\div_x(w\mathbf{f})|^2 \, \d\mu
	&= \sup_{g} \biggl(\iint_{\ree} \lambda\mathcal{E}_\lambda w^{-1}\div_x(w{\bf{f}}) \cdot \cl{g}\, \d \mu\biggr)^2 \\
	&= \sup_{g}\biggl( \iint_{E} - {\bf{f}}\cdot \cl{\lambda  \nabla_x \mathcal{E}_\lambda^*{g}}\, \d \mu \biggr)^2,
\end{align*}
where the supremum is taken with respect to all $g \in \L^2_\mu$,  with support in $F$, such that $\|g\|_{2,\mu}=1$. We can now complete the proof by applying the Cauchy--Schwarz inequality and (i) of the lemma but for $\mathcal{E}_\lambda^*$.
\end{proof}
\section{Weighted Littlewood--Paley theory in the parabolic setting}
\label{sec: weighted LP}

 We could develop a weighted parabolic Littlewood--Paley theory following the approach for singular integrals on spaces of homogeneous type~\cite{DJS}. However, since  our weight $w$ is time independent, we decided to present a down-to-earth approach by combining weighted elliptic theory known in the field~ \cite{CUR1, CUR2, Garcia-Cuerva} with Fourier analysis on the real line. Most of our estimates here are formulated using the square function norm
\begin{align}
\label{SF norm}
 |||\cdot|||_{2,\mu}:=\biggl (\int_0^\infty\iint_{\mathbb R^{n+1}}|\cdot|^2\, \frac{\d \mu\d\lambda}\lambda\biggr )^{\frac{1}{2}}.
\end{align}

For the rest of the paper, $\P\in \C_0^\infty(\ree)$ is a fixed real-valued function  in product form
$$\P(x,t)=\P^{(1)}(x)\P^{(2)}(t),$$
where $\P^{(1)}$ and $\P^{(2)}$ are both radial, non-negative, and have integral $1$. For all $x \in \R^n, t \in \R$, we set
\begin{align*}
    &\P_\lambda^{(1)}(x) := \lambda^{-n} \P^{(1)}(x/\lambda), \\
    &\P_\lambda^{(2)}(t) := \lambda^{-2} \P^{(2)}(t/\lambda^2), \\
    & \P_\lambda(x,t):=\P_\lambda^{(1)}(x)\P_\lambda^{(2)}(t)=\lambda^{-n-2}\P^{(1)}(x/\lambda) \P^{(2)}(t/\lambda^2),
\end{align*}
whenever $\lambda>0$.
With a slight abuse of notation, we let $\P_\lambda$ also denote the associated convolution operator
$$\P_\lambda f(x,t)=\P_\lambda\ast f(x,t)=\iint_{\mathbb R^{n+1}}\P_\lambda(x-y,t-s)f(y,s)\, \d y\d s,$$
and likewise for $\P_\lambda^{(1)}$ and $\P_\lambda^{(2)}$. We note that
\begin{equation}
\begin{aligned}\label{dda}
|\P^{(1)}_\lambda f(x,t)| & \leq  \mathcal{M}^{(1)}(f(\cdot,t))(x),\\ |\P^{(2)}_\lambda f(x,t)| & \leq  \mathcal{M}^{(2)}(f(x,\cdot))(t), \\ |\P_\lambda f(x,t)| & \leq  \mathcal{M}^{(1)}(\mathcal{M}^{(2)}f(\cdot,t))(x),
\end{aligned}
\end{equation}
almost everywhere, for every $f \in \L^1_{\loc}(\ree)$, see \cite[Sec.~II.2.1]{Stein}. In particular, these pointwise bounds hold for $f \in \L^2_\mu$.
The boundedness of the maximal operators in $\L^2_\mu$ implies
\begin{align*}
	\sup_{\lambda > 0}  \|\P_{\lambda}\|_{2 \to 2,\mu} \lesssim 1,
\end{align*}
see Section~\ref{maximal functions}.

\begin{lem}\label{little1} For all $f\in \L^2_\mu(\mathbb R^{n+1})$,
\begin{align*}
	|||\lambda\nabla_x \P_\lambda f|||_{2,\mu}+|||\lambda^2\partial_t\P_\lambda f|||_{2,\mu}+|||\lambda \dhalf \P_\lambda f|||_{2,\mu}\lesssim \|f\|_{2,\mu}.
\end{align*}

\end{lem}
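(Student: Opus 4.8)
The three square‐function estimates should be proved separately, but all along the same lines: reduce each of them to a one‐dimensional Littlewood--Paley estimate (in $x$, in the weighted space $\L^2_w$, or in $t$, in the unweighted space $\L^2(\R,\d t)$), then integrate over the frozen variable using Fubini. The first ingredient is the uniform $\L^2_\mu$-boundedness of $\P_\lambda$ recorded just above; the second is the classical weighted vertical square function estimate for the Poisson-type (or more precisely Gaussian-type) semigroup generated by the mollifier $\P^{(1)}$ and its $x$-gradient. Concretely, for the first term I would freeze $t$ and write $\lambda\gradx\P_\lambda f(\cdot,t)=\lambda\gradx\P^{(1)}_\lambda\bigl(\P^{(2)}_\lambda f(\cdot,t)\bigr)$; the operator $g\mapsto\bigl(\int_0^\infty\|\lambda\gradx\P^{(1)}_\lambda g\|_{2,w}^2\,\tfrac{\d\lambda}{\lambda}\bigr)^{1/2}$ is bounded on $\L^2_w$ by the cited weighted elliptic theory \cite{CUR1, CUR2, Garcia-Cuerva} (the kernel of $\lambda\gradx\P^{(1)}_\lambda$ is a nice, radially bounded, integrable-decay approximation of the identity with mean zero, so weighted Calderón--Zygmund/square-function machinery applies). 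Hence
\[
\int_0^\infty\|\lambda\gradx\P_\lambda f(\cdot,t)\|_{2,w}^2\,\frac{\d\lambda}{\lambda}\lesssim\sup_{\lambda>0}\|\P^{(2)}_\lambda f(\cdot,t)\|_{2,w}^2\cdot\bigl(\cdots\bigr)?
\]
— no, more honestly: the square function bound is uniform, and since $\|\P^{(2)}_\lambda g\|_{2,w}\le\Max^{(2)}(\|g(x,\cdot)\|)$ pointwise one instead estimates the full double integral by Fubini, bounding the $\lambda$-integral in $x$ first for each fixed $t$ after passing $\P^{(2)}_\lambda$ through; cleanest is to note $\|\P^{(2)}_\lambda h\|_{2,w(dx)}\lesssim\|h\|_{2,w(dx)}$ uniformly in $\lambda,t$ is false without averaging, so the correct route is: apply the weighted $x$-square function estimate with the remaining operator $\P^{(2)}_\lambda$ kept inside, using that $\lambda\gradx\P^{(1)}_\lambda$ and $\P^{(2)}_\lambda$ act on different variables and commute, then integrate $\d t$ and use $\L^2$-boundedness of $\Max^{(2)}$ on $\L^2_\mu$.

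\textbf{The clean argument.} Let me restate the step I would actually write: by Fubini,
\[
|||\lambda\gradx\P_\lambda f|||_{2,\mu}^2=\int_{\R}\!\int_0^\infty\!\int_{\R^n}|\lambda\gradx\P^{(1)}_\lambda\P^{(2)}_\lambda f(x,t)|^2\,\d w(x)\,\frac{\d\lambda}{\lambda}\,\d t.
\]
For fixed $t$, the inner double integral is the weighted vertical square function of $g_t:=(\P^{(2)}_\lambda f)(\cdot,t)$ — except $g_t$ still depends on $\lambda$; to decouple, bound $|\P^{(2)}_\lambda f(x,t)|\le\Max^{(2)}(f(x,\cdot))(t)=:G(x,t)$ pointwise, which is independent of $\lambda$, so the inner integral is $\le\int_0^\infty\|\lambda\gradx\P^{(1)}_\lambda G(\cdot,t)\|_{2,w}^2\,\tfrac{\d\lambda}{\lambda}\lesssim\|G(\cdot,t)\|_{2,w}^2$ by the weighted elliptic square function estimate. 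Then $\int_\R\|G(\cdot,t)\|_{2,w}^2\,\d t=\|\Max^{(2)}f\|_{2,\mu}^2\lesssim\|f\|_{2,\mu}^2$ by $\L^2_\mu$-boundedness of $\Max^{(2)}$. The second term $|||\lambda^2\partial_t\P_\lambda f|||_{2,\mu}$ is handled symmetrically, now freezing $x$: write $\lambda^2\partial_t\P_\lambda f(x,\cdot)=\lambda^2\partial_t\P^{(2)}_\lambda\bigl(\P^{(1)}_\lambda f(x,\cdot)\bigr)$, dominate $|\P^{(1)}_\lambda f(x,t)|\le\Max^{(1)}(f(\cdot,t))(x)$, and invoke the classical (unweighted) one-dimensional square function estimate $\int_0^\infty\|\lambda^2\partial_t\P^{(2)}_\lambda h\|_{\L^2(\R,\d t)}^2\,\tfrac{\d\lambda}{\lambda}\lesssim\|h\|_{\L^2(\R,\d t)}^2$ — immediate by Plancherel since $\widehat{\lambda^2\partial_t\P^{(2)}_\lambda}(\tau)=\i\lambda^2\tau\,\widehat{\P^{(2)}}(\lambda^2\tau)$ and $\int_0^\infty|\lambda^2\tau\,\widehat{\P^{(2)}}(\lambda^2\tau)|^2\,\tfrac{\d\lambda}{\lambda}=\tfrac12\int_0^\infty|\rho\,\widehat{\P^{(2)}}(\rho)|^2\,\tfrac{\d\rho}{\rho}<\infty$ because $\widehat{\P^{(2)}}$ is smooth, $\widehat{\P^{(2)}}(0)=1$ and Schwartz-decaying. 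Then integrate $\d w(x)$ and use boundedness of $\Max^{(1)}$ on $\L^2_w$.

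\textbf{The half-derivative term.} For $|||\lambda\dhalf\P_\lambda f|||_{2,\mu}$ the same freezing-$x$ strategy works, with the Fourier multiplier symbol $\lambda|\tau|^{1/2}\widehat{\P^{(2)}}(\lambda^2\tau)$ in the $t$-variable. Again one reduces to the one-dimensional estimate $\int_0^\infty\|\lambda\dhalf\P^{(2)}_\lambda h\|_{\L^2(\R,\d t)}^2\,\tfrac{\d\lambda}{\lambda}\lesssim\|h\|_{\L^2(\R,\d t)}^2$, which by Plancherel amounts to $\sup_\tau\int_0^\infty\lambda^2|\tau|\,|\widehat{\P^{(2)}}(\lambda^2\tau)|^2\,\tfrac{\d\lambda}{\lambda}=\tfrac12\int_0^\infty\rho\,|\widehat{\P^{(2)}}(\rho)|^2\,\tfrac{\d\rho}{\rho}<\infty$, finite by the same decay/vanishing properties of $\widehat{\P^{(2)}}$; here one also notes there is no need for $\widehat{\P^{(2)}}$ to vanish at $0$ with this (half-order) weight, only Schwartz decay at infinity and boundedness near $0$ are used. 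Then pass $\P^{(1)}_\lambda$ out via the maximal function $\Max^{(1)}$ as before and integrate $\d w(x)$. The only point requiring genuine (rather than bookkeeping) attention is the \emph{weighted} $x$-square function estimate for $\lambda\gradx\P^{(1)}_\lambda$ used in the first term; everything else is Plancherel plus boundedness of the one-variable maximal operators on the respective $\L^2$ spaces. That weighted estimate is, however, exactly the kind of statement covered by the cited references \cite{CUR1, CUR2, Garcia-Cuerva}, since $\{\lambda\gradx\P^{(1)}_\lambda\}_{\lambda>0}$ is a standard $A_2$-admissible Littlewood--Paley family (smooth, compactly-supported-derived kernels with the appropriate mean-value and decay properties), so I would simply cite it. The main obstacle, then, is not difficulty but pedantry: making sure the decoupling via $\Max^{(1)},\Max^{(2)}$ is licit (it is, by the pointwise bounds \eqref{dda}) and that the one-dimensional multiplier integrals over $\tfrac{\d\lambda}{\lambda}$ converge with the correct powers of $\lambda$ — which dictates, a posteriori, why $\P^{(1)}$ is required to be radial and $\P^{(2)}$ to have integral one but no cancellation beyond that.
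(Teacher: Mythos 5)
Your overall plan — exploit the product structure $\P_\lambda=\P^{(1)}_\lambda\P^{(2)}_\lambda$, handle the $t$-derivative terms by Plancherel in $t$, and handle the gradient term by weighted elliptic Littlewood--Paley theory in $x$ — is exactly the paper's strategy, and your computations of the one-dimensional multiplier integrals $\int_0^\infty|\rho\,\widehat{\P^{(2)}}(\rho)|^2\frac{\d\rho}{\rho}$ and $\int_0^\infty\rho\,|\widehat{\P^{(2)}}(\rho)|^2\frac{\d\rho}{\rho}$ are correct. But the decoupling step you use in all three terms is genuinely wrong. You write the composition with the cancellative operator on the \emph{outside} and the auxiliary mollifier on the \emph{inside} (e.g.\ $\lambda\gradx\P^{(1)}_\lambda(\P^{(2)}_\lambda f(\cdot,t))$), and then replace the $\lambda$-dependent input $\P^{(2)}_\lambda f$ by its $\lambda$-independent majorant $G=\Max^{(2)}f$. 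A pointwise bound $|\P^{(2)}_\lambda f|\le G$ on the input does \emph{not} imply $\|\lambda\gradx\P^{(1)}_\lambda(\P^{(2)}_\lambda f(\cdot,t))\|_{2,w}\le\|\lambda\gradx\P^{(1)}_\lambda G(\cdot,t)\|_{2,w}$: the kernel of $\lambda\gradx\P^{(1)}_\lambda$ has mean zero and changes sign, so majorizing the input can only destroy, never preserve, the cancellation. (Take $G$ locally constant and $\P^{(2)}_\lambda f$ oscillating beneath it: the right-hand side is essentially $0$ while the left-hand side is not.) The same objection applies verbatim to your treatment of $\lambda^2\partial_t\P^{(2)}_\lambda$ and $\lambda\dhalf\P^{(2)}_\lambda$ with $\Max^{(1)}$. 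The pointwise bounds \eqref{dda} control the \emph{output} of the positive operators $\P^{(1)}_\lambda,\P^{(2)}_\lambda$; they cannot be fed into a subsequent signed operator.

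The fix is the paper's (and costs nothing): commute the two factors so that the positive mollifier in the auxiliary variable sits \emph{outside}, e.g.\ $\lambda\gradx\P_\lambda f=\P^{(2)}_\lambda\bigl(\lambda\gradx\P^{(1)}_\lambda f\bigr)$ and $\lambda\dhalf\P_\lambda f=\P^{(1)}_\lambda\bigl(\lambda\dhalf\P^{(2)}_\lambda f\bigr)$. For each fixed $\lambda$ the outer operator is uniformly bounded on $\L^2_\mu$ (this is where \eqref{dda} and the maximal function bounds are legitimately used), so it can be discarded inside the $\lambda$-integral, leaving the square-function operator applied directly to the $\lambda$-independent function $f$; then Fubini, Plancherel in $t$, and \cite[Lem.~4.6]{CUR1} in $x$ finish the proof exactly as you intended. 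With this reordering your argument becomes the paper's proof.
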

\begin{proof}
Here, we write out in detail how the product structure of $\P_\lambda$ can be used to prove parabolic estimates in $\R^{n+1}$ through weighted elliptic theory and classical Fourier analysis. This motif will appear in all proofs of this section. Let $\hat{g}$ denote the Fourier transform  in time of a function $g$ on $\R^{n+1}$.

By uniform boundedness of $\P^{(1)}_\lambda$ in $\L^2_\mu$ and Plancherel's theorem, we have
\begin{align*}
    |||\lambda \dhalf \P_\lambda f|||^2_{2,\mu}   &= \int_0^{\infty} \iint_{\ree} | \P^{(1)}_\lambda \lambda \dhalf \P^{(2)}_\lambda f|^2 \, \frac{\d \mu \d \lambda}{\lambda} \\ & \lesssim \int_{\R^n} \int_0^{\infty} \int_{\R} | \lambda \dhalf \P^{(2)}_\lambda f|^2 \, \frac{\d t \d \lambda}{\lambda}  \d w \\ & = \int_{\R^n} \int_0^{\infty} \int_{\R} | \lambda |\tau|^{\frac{1}{2}} \widehat{\P^{(2)}}(\lambda^2 \tau) \hat{f}(x,\tau)|^2 \, \frac{\d \tau \d \lambda}{\lambda}  \d w(x).
\end{align*}
The integral in $\lambda$ is finite and independent of $\tau$ since $\widehat{\P^{(2)}}$ is a radial Schwartz function. Applying Plancherel's theorem backwards, we get the desired bound by $\|f\|_{2,\mu}^2$. The same argument yields the bound for $|||\lambda^2 \partial_t \P_\lambda f|||_{2,\mu}$.

Finally, in order to bound $\lambda \nabla_x \P_\lambda f$, we use uniform boundedness of $\P^{(2)}_\lambda$ to get
\begin{align*}
	|||\lambda\nabla_x \P_\lambda f|||^2_{2,\mu}
	& = \int_0^{\infty} \iint_{\ree} | \P^{(2)}_\lambda \lambda \nabla_x \P^{(1)}_\lambda f|^2 \, \frac{\d \mu \d \lambda}{\lambda} \\
	& \lesssim \int_\R \int_0^{\infty} \int_{\R^n} | \lambda \nabla_x \P^{(1)}_\lambda f|^2 \, \frac{\d w \d \lambda}{\lambda} \d t.
\end{align*}
For fixed $t$, we now need weighted elliptic Littlewood-Paley theory. The operator $\lambda \nabla_x \P^{(1)}_\lambda$ acts by convolution with $\Psi_\lambda$, where $\Psi = \nabla_x \P^{(1)}$ has integral $0$. Thus, we can use e.g.\ \cite[Lem.~4.6]{CUR1} to control the integral in $\d w \d \lambda$ by $\|f(\cdot,t)\|_{2,w}^2$ and the proof is complete.
\end{proof}

\begin{lem}
\label{little2}
For all $f \in \E_{\mu}$,
\begin{eqnarray*}
|||\lambda^{-1}(I-\P_\lambda) f|||_{2,\mu}\lesssim \|{\mathbb D} f\|_{2,\mu}.
\end{eqnarray*}
\end{lem}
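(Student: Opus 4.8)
The plan is to estimate the two components of the parabolic gradient separately, exploiting the product structure $\P_\lambda = \P_\lambda^{(1)} \P_\lambda^{(2)}$ exactly as in the proof of Lemma~\ref{little1}. First I would split
\[
	I - \P_\lambda = (I - \P_\lambda^{(1)}) + \P_\lambda^{(1)}(I - \P_\lambda^{(2)}),
\]
so that it suffices to bound $|||\lambda^{-1}(I-\P_\lambda^{(1)})f|||_{2,\mu}$ by $\|\nabla_x f\|_{2,\mu}$ and $|||\lambda^{-1}\P_\lambda^{(1)}(I-\P_\lambda^{(2)})f|||_{2,\mu}$ by $\|\dhalf f\|_{2,\mu}$; here the uniform boundedness of $\P_\lambda^{(1)}$ on $\L^2_\mu$ absorbs the first factor in the second term.

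For the spatial term, I would freeze $t$ and invoke weighted elliptic Littlewood--Paley theory: since $\P^{(1)}$ has integral $1$, the kernel of $\lambda^{-1}(I-\P_\lambda^{(1)})$ applied to $f$ can be rewritten, using the fundamental theorem of calculus, as a convolution of $\nabla_x f$ against a rescaled kernel $\Phi_\lambda$ with $\Phi = -\int_0^1 \P^{(1)}(\cdot/s)\,\d s \ast (\,\cdot\,)$ of integral ... more precisely, one uses the standard identity $(I-\P_\lambda^{(1)})f = -\int_{\R^n}\P^{(1)}(y)\int_0^1 \lambda\, y\cdot\nabla_x f(x-s\lambda y,t)\,\d s\,\d y$, which exhibits $\lambda^{-1}(I-\P_\lambda^{(1)})f(\cdot,t)$ as an average of translates/dilates of $\nabla_x f(\cdot,t)$ tested against a fixed Schwartz density of integral zero (in the vector sense). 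Then \cite[Lem.~4.6]{CUR1} (as cited in the proof of Lemma~\ref{little1}) gives
\[
	\int_0^\infty\!\int_{\R^n} |\lambda^{-1}(I-\P_\lambda^{(1)})f(\cdot,t)|^2\,\frac{\d w\,\d\lambda}{\lambda} \lesssim \|\nabla_x f(\cdot,t)\|_{2,w}^2,
\]
and integrating in $t$ gives the claimed bound. For the temporal term, I would freeze $x$ and use Plancherel in $t$: the symbol of $\lambda^{-1}(I-\P_\lambda^{(2)})$ is $\lambda^{-1}(1-\widehat{\P^{(2)}}(\lambda^2\tau))$, and since $\widehat{\P^{(2)}}$ is a radial Schwartz function with $\widehat{\P^{(2)}}(0)=1$, we have $|1-\widehat{\P^{(2)}}(\lambda^2\tau)| \lesssim \min\{1,\lambda^2|\tau|\}$, hence $\lambda^{-1}|1-\widehat{\P^{(2)}}(\lambda^2\tau)| \lesssim |\tau|^{1/2}\,\theta(\lambda^2|\tau|)$ with $\theta(s) = s^{-1/2}\min\{1,s\}$ satisfying $\int_0^\infty \theta(s)^2\,\frac{\d s}{s} < \infty$. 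Substituting $s = \lambda^2|\tau|$ makes the $\lambda$-integral finite and independent of $\tau$, leaving $\int_\R |\tau|\,|\hat f(x,\tau)|^2\,\d\tau = \|\dhalf f(x,\cdot)\|_{2,t}^2$ after Plancherel; integrating $\d w(x)$ finishes the temporal estimate.

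The main obstacle I anticipate is purely bookkeeping: making the representation of $(I-\P_\lambda^{(1)})f$ as a superposition of dilated convolutions against a mean-zero kernel precise enough that the weighted square-function estimate of \cite{CUR1} applies directly — one must check that the auxiliary kernel is a legitimate Schwartz (or at least sufficiently regular and decaying) function with the right cancellation, uniformly in the parameter $s\in(0,1)$, and then use Minkowski's integral inequality to pull the $\d s$-average outside the square-function norm. None of this is deep, but it is the only step where the $A_2$-machinery is genuinely invoked, so it deserves the most care; everything on the time side is elementary Fourier analysis as in Lemma~\ref{little1}.
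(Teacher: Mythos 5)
Your proposal is correct and follows essentially the same route as the paper: split $I-\P_\lambda$ into a spatial and a temporal piece using the product structure, handle the spatial piece by weighted elliptic Littlewood--Paley theory at fixed $t$, and the temporal piece by Plancherel in $t$ at fixed $x$. The only (cosmetic) difference is that you re-derive the spatial estimate from the mean-zero square-function bound of \cite[Lem.~4.6]{CUR1} via the fundamental theorem of calculus and Minkowski's inequality, whereas the paper simply cites the ready-made statement \cite[Prop.~2.3]{CUR} (originally \cite[Prop.~4.7]{CUR1}) for the operator $\lambda^{-1}(I-\P^{(1)}_\lambda)$.
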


\begin{proof}
We first claim
\begin{align}\label{ML}
|||\lambda^{-1}(I-\P_\lambda^{(1)}) f|||_{2,\mu}+|||\lambda^{-1}(I-\P_\lambda^{(2)}) f|||_{2,\mu}\lesssim \|{\mathbb D} f\|_{2,\mu}.
\end{align}
As in the proof of Lemma \ref{little1}, this can be proved using Plancherel's theorem in $t$ for the second term, and weighted Littlewood-Paley theory with $t$ fixed for the first term. The required weighted result is \cite[Prop.~2.3]{CUR} (originally \cite[Prop.~4.7]{CUR1}) and the application to the concrete operator considered here is detailed in the lines following equation (4.3) in the same paper.

In order to complete the proof of the lemma, we simply write
\begin{align*}
(I-\P_\lambda)=\P_\lambda^{(2)}(1-\P_\lambda^{(1)})+(1-\P_\lambda^{(2)}).
\end{align*}
The result follows from \eqref{ML} and the uniform boundedness of $\P^{(2)}_\lambda$ in $\L^2_\mu$.
\end{proof}

In the following we write $\Delta = Q \times I$ for parabolic cubes in $\R^{n+1} = \R^n \times \R$.
\begin{defn}
We define $\mathcal{A}_\lambda^{(1)}, \mathcal{A}_\lambda^{(2)}, \mathcal{A}_\lambda$ to be the dyadic averaging operators in $x$, $t$ and $(x,t)$ with respect to parabolic scaling, that is, if $\Delta = Q \times I$ is the dyadic parabolic cube with $\ell(\Delta)/2 < \lambda \leq \ell(\Delta)$ containing $(x,t)$, then
\begin{align*}
\mathcal{A}_\lambda^{(1)} f(x,t) &:= \barint_{Q}f\, \d y, \\
\mathcal{A}_\lambda^{(2)} f(x,t) &:= \barint_{I}f\, \d s,  \\
      \mathcal{A}_\lambda f(x,t) &:=\bariint_{\Delta}f\, \d y\d s = \mathcal{A}_\lambda^{(1)} \mathcal{A}_\lambda^{(2)}  f(x,t).
\end{align*}
\end{defn}

It follows from the bounds for the maximal operators in Section~\ref{maximal functions} and doubling that the dyadic averaging operators are bounded on $\L^2_\mu$, uniformly in $\lambda$.

\begin{lem} \label{little3} Let  $\P_\lambda$ and $\mathcal{A}_\lambda$ be as above. Then, for all $f\in \L^2_\mu(\mathbb R^{n+1})$,
\begin{eqnarray*}
 |||(\mathcal{A}_\lambda-\P_\lambda)f|||_{2,\mu}\lesssim \|f\|_{2,\mu}.
\end{eqnarray*}
\end{lem}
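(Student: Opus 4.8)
The plan is to exploit the product structure of $\P_\lambda = \P_\lambda^{(1)}\P_\lambda^{(2)}$ and of the dyadic averaging operator $\mathcal{A}_\lambda = \mathcal{A}_\lambda^{(1)}\mathcal{A}_\lambda^{(2)}$, so that the bound reduces to one-variable Littlewood--Paley estimates in the same spirit as Lemmas \ref{little1} and \ref{little2}. The first step is the algebraic decomposition
\begin{align*}
	\mathcal{A}_\lambda - \P_\lambda = \mathcal{A}_\lambda^{(1)}(\mathcal{A}_\lambda^{(2)} - \P_\lambda^{(2)}) + (\mathcal{A}_\lambda^{(1)} - \P_\lambda^{(1)})\P_\lambda^{(2)},
\end{align*}
which splits the operator into a piece that differs from the identity only in the $t$-variable (up to the bounded operator $\mathcal{A}_\lambda^{(1)}$) and a piece that differs only in the $x$-variable (up to the bounded operator $\P_\lambda^{(2)}$). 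Using the uniform $\L^2_\mu$-boundedness of $\mathcal{A}_\lambda^{(1)}$ and $\P_\lambda^{(2)}$ recorded in Section \ref{sec: weighted LP}, it then suffices to prove the two one-variable square function bounds
\begin{align*}
	|||(\mathcal{A}_\lambda^{(2)} - \P_\lambda^{(2)})f|||_{2,\mu} \lesssim \|f\|_{2,\mu}, \qquad |||(\mathcal{A}_\lambda^{(1)} - \P_\lambda^{(1)})f|||_{2,\mu} \lesssim \|f\|_{2,\mu}.
\end{align*}

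For the second variable, I would proceed exactly as in Lemma \ref{little1}: freeze $x$, apply Plancherel in $t$, and observe that $\mathcal{A}_\lambda^{(2)} - \P_\lambda^{(2)}$ is (for $\ell(I)/2 < \lambda \leq \ell(I)$) a Fourier multiplier whose symbol is a bounded function of $\lambda^2\tau$ that decays at both $0$ and $\infty$ --- at $\tau = 0$ both operators reproduce constants so the symbol vanishes, and at high frequencies both symbols are bounded with some decay, giving $\int_0^\infty |m(\lambda^2\tau)|^2\,\d\lambda/\lambda \lesssim 1$ uniformly in $\tau$; then Plancherel backwards and integration in $\d w(x)$ finishes it. A mild technical point here is that $\mathcal{A}_\lambda^{(2)}$ is a dyadic operator, not a genuine convolution, so the "symbol" is really the symbol of averaging over the unique dyadic interval of the relevant generation containing $t$; one handles this by noting it is dominated by, and morally behaves like, convolution with a fixed bump, and the square function estimate for such dyadic operators is classical. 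For the first variable, one freezes $t$ instead and invokes weighted elliptic Littlewood--Paley theory: $\mathcal{A}_\lambda^{(1)} - \P_\lambda^{(1)}$ reproduces constants, hence is built from convolution-type kernels with cancellation, and the weighted estimate in $\L^2(\R^n,\d w)$ is available from the references cited in Section \ref{sec: weighted LP} (e.g.\ \cite[Lem.~4.6]{CUR1} and the companion results), with the dyadic averaging part treated as in those works. Integrating the resulting pointwise-in-$t$ bound $\lesssim \|f(\cdot,t)\|_{2,w}^2$ over $t$ gives the claim.

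I expect the main obstacle to be the careful handling of the dyadic averaging operators $\mathcal{A}_\lambda^{(i)}$: unlike the smooth convolution operators $\P_\lambda^{(i)}$, they are not translation-invariant and their "symbol" depends on the base point, so the Plancherel argument for the $t$-variable must be phrased either through comparison with a smooth averaging kernel or by directly invoking the known square function bounds for dyadic martingale-difference-type operators. Once it is accepted that $|||(\mathcal{A}_\lambda^{(2)} - \text{Id})f|||$-type quantities and their smooth counterparts are both controlled, the comparison $\mathcal{A}_\lambda^{(2)} - \P_\lambda^{(2)} = (\mathcal{A}_\lambda^{(2)} - \text{Id}) - (\P_\lambda^{(2)} - \text{Id})$ is immediate by the triangle inequality in the square function norm; similarly in $x$ using the weighted theory. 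Everything else --- the algebraic splitting and the uniform boundedness of the "spectator" operators --- is routine.
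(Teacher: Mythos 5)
Your decomposition and reduction are essentially the paper's: the authors write $\mathcal{A}_\lambda-\P_\lambda=\mathcal{A}_\lambda^{(2)}(\mathcal{A}_\lambda^{(1)}-\P_\lambda^{(1)})+\P_\lambda^{(1)}(\mathcal{A}_\lambda^{(2)}-\P_\lambda^{(2)})$ (yours is the same splitting with the roles of the spectator factors exchanged), pull out the uniformly bounded spectator using commutativity, and then quote the weighted elliptic version of the lemma \cite[Lem.~5.2]{CUR1} for the $x$-part and, after the substitution $\lambda'=\lambda^2$, the classical unweighted one-dimensional result \cite[App.~C, Eq.~(4)]{Asterisque} for the $t$-part. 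So the core of your argument is sound and matches the paper.

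One remark in your last paragraph is, however, genuinely wrong and you should not lean on it: neither $|||(\mathcal{A}_\lambda^{(2)}-I)f|||_{2,\mu}$ nor $|||(\P_\lambda^{(2)}-I)f|||_{2,\mu}$ is controlled by $\|f\|_{2,\mu}$ for general $f\in\L^2_\mu$. As $\lambda\to\infty$ both $\mathcal{A}^{(2)}_\lambda f$ and $\P^{(2)}_\lambda f$ tend to $0$, so $(\P^{(2)}_\lambda-I)f\to -f$ and the integral $\int_1^\infty\|f\|_{2,\mu}^2\,\d\lambda/\lambda$ diverges; a bound at small $\lambda$ costs a derivative, which is exactly why Lemma~\ref{little2} is stated on $\E_\mu$ with $\|\ID f\|_{2,\mu}$ on the right. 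The whole point of the present lemma is the cancellation \emph{between} $\mathcal{A}_\lambda$ and $\P_\lambda$ at every scale (both reproduce constants, so the difference kernel has vanishing mean), which is what the quasi-orthogonality arguments in the cited references exploit; the triangle inequality destroys it. Relatedly, dominating $\mathcal{A}^{(2)}_\lambda$ by convolution with a bump also loses this cancellation, so the correct way to handle the dyadic operator is to invoke the known square function bound for $\mathcal{A}_\lambda-\P_\lambda$ itself (your "the square function estimate for such dyadic operators is classical"), which is what the paper does.
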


\begin{proof}
We follow our (general) strategy and write
\begin{align*}
	\mathcal{A}_\lambda - \P_\lambda = 	\mathcal{A}_\lambda^{(2)}(\mathcal{A}_\lambda^{(1)} - 	\P_\lambda^{(1)}) + \P_\lambda^{(1)}(\mathcal{A}_\lambda^{(2)} - 	\P_\lambda^{(2)}),
\end{align*}
where we have also used that $	\mathcal{A}_\lambda^{(2)}$ and $\P_\lambda^{(1)}$ commute since they act in different variables. Since these operators are uniformly bounded on $\L^2_\mu$ with respect to $\lambda$, we get
\begin{align*}
	 |||(\mathcal{A}_\lambda-\P_\lambda)f|||_{2,\mu}
	 &\lesssim \int_\R \int_0^\infty \|(\mathcal{A}_\lambda^{(1)} - 	\P_\lambda^{(1)})f(\cdot,t)\|_{2,w}^2 \, \frac{\d \lambda}{\lambda} \d t \\
	 &\quad + \int_{\R^n} \int_0^\infty \|(\mathcal{A}_\lambda^{(2)} - 	\P_\lambda^{(2)})f(x,\cdot)\|_{2,\d t}^2 \, \frac{\d \lambda}{\lambda} \d w (x).
\end{align*}
For the first term on the right, we can rely on the weighted elliptic version of the lemma~\cite[Lem.~5.2]{CUR1}. For the second term on the right, we can make a change of variables $\lambda' = \lambda^2$ and use the unweighted one-dimensional version of the lemma, which  of course follows from the same reference or the classical proof in \cite[App.~C, Eq.~(4)]{Asterisque}.
\end{proof}
\section{Reduction to a quadratic estimate}
\label{sec: reduction QE}

 The purpose of this short section is to reduce our main result, Theorem~\ref{thm:Kato}, to the quadratic estimate
\begin{eqnarray}
\label{ea1intro}
      |||\lambda \cH \cE_\lambda f|||_{2,\mu}\lesssim \|\nabla_x f\|_{2,\mu} +\|\HT \dhalf f \|_{2,\mu} \qquad (f \in \E_\mu).
\end{eqnarray}
Recall that $\cE_\lambda = (1+ \lambda^2 \cH)^{-1}$. Since the sesquilinear form associated with $\cH$ is not closed, see Section~\ref{sec: parabolic operator}, classical results \`a la Lions as in the elliptic case do not apply and here we give full details of this reduction.

At this point, we require some essentials from functional calculus. We give references along the way and we refer the reader to \cite{Mc, Haase} for further background. Since $\cH$ is maximal accretive (Proposition~\ref{prop: max accretive}), it has a unique maximal accretive square root $\sqrt{\cH}$ defined by the functional calculus for sectorial operators and the same is true for the adjoint $\cH^*$ with $\sqrt{\cH^*} = (\sqrt{\cH})^*$.

In order to see the reduction alluded to above, we start out with the Calderón reproducing formula in \cite[Thm.~5.2.6]{Haase} and we write
\begin{eqnarray}
\label{est1+kaintro}
\sqrt{\cH}f= \frac{16}{\pi} \int_0^\infty \lambda^3\cH^2 (1+\lambda^2\cH)^{-3} f\, \frac {\d\lambda}\lambda,
\end{eqnarray}
where $f \in \dom(\sqrt{\cH})$ and the integral is understood as an improper Riemann integral in $\L_\mu^2$. Testing this identity against $g \in \L^2_\mu$ and applying Cauchy--Schwarz, we obtain
\begin{eqnarray*}
|\langle\sqrt{\cH}f,g\rangle_{2,\mu}|
\leq \frac{16}{\pi}|||\lambda \cH(1+\lambda^2\cH)^{-1} f|||_{2,\mu}\times|||\lambda^2\cH^\ast
       (1+\lambda^2\cH^\ast)^{-2}g|||_{2,\mu}.
\end{eqnarray*}
The second term is controlled by a structural constant times $\|g\|_{2,\mu}$ since $\cH^*$ is maximal accretive in $\L^2_\mu$ --- more precisely, this follows from von Neumann's inequality~\cite[Thm.~7.1.7]{Haase} and the characterization of the emerging functional calculus due to McIntosh~\cite[Thm.~7.3.1]{Haase}. Taking the supremum over all $g$ yields
\begin{align*}
    \|\sqrt{\cH}f\|_{2,\mu} \lesssim |||\lambda \cH(I+\lambda^2\cH)^{-1} f|||_{2,\mu}.
\end{align*}
Let us now suppose that \eqref{ea1intro} holds. Then, we obtain
\begin{align}
\label{ga1}
 \| \sqrt{\cH}f \|_{2,\mu} \lesssim \|\nabla_x f\|_{2,\mu} +\|\HT \dhalf f \|_{2,\mu},
\end{align}
when $f$ is in  $\E_\mu \cap \dom(\sqrt{\cH}) \supset \dom(\cH)$. However, since this space is dense in $\E_\mu$ by Proposition~\ref{prop: max accretive}, and as $\sqrt{\cH}$ is closed, the estimate extends to all $f \in \E_\mu$. Next, we note that $\cH^*$ is similar to an operator in the same class as $\cH$ under conjugation with the `time reversal' $f(t,x) \mapsto f(-t,x)$ and conjugation of $A$. Hence, we also have
\begin{align}
\label{ga2}
 \| \sqrt{\cH^*}g \|_{2,\mu} \lesssim \|\nabla_x g\|_{2,\mu} +\|\HT \dhalf g \|_{2,\mu},
\end{align}
whenever $g \in \E_\mu$. Using \eqref{eq0: hidden coercivity} with $\sigma = 0$ and $\delta$ small enough depending on the structural constants, we obtain for all $f \in \dom(\cH)$ that
\begin{align*}
\delta \|\nabla_x f \|^2_{2,\mu} + \delta \|\dhalf f\|^2_{2,\mu}
&\leq |\langle \cH f,(1+\delta \HT) f\rangle_{2,\mu}| \\
&\leq \|\sqrt{\cH}f \|_{2,\mu} \|\sqrt{\cH^*} (1+\delta \HT) f \|.
\end{align*}
Now, \eqref{ga2} with $g := (1+\delta \HT)f \in \E_\mu$ implies
\begin{align}
\label{ga3}
\|\nabla_x f \|_{2,\mu} + \|\dhalf f \|_{2,\mu} \lesssim  \|\sqrt{\cH} f \|_{2,\mu}.
\end{align}
Since $\dom(\cH)$ is dense in $\dom(\sqrt{\cH})$ for the graph norm \cite[Prop.~3.1.1(h)]{Haase}, the estimate extends to all $f \in \dom(\sqrt{\cH})$.

In conclusion, we have seen that \eqref{ea1intro} implies the statement of Theorem~\ref{thm:Kato} through the estimates \eqref{ga1} and \eqref{ga3}. Therefore, the rest of the paper is devoted to the task of proving \eqref{ea1intro}.
\section{Principal part approximation}
\label{sec: principal part approximation}

 In order to prove the square function estimate \eqref{ea1intro}, we will eventually split $\cH$ into its elliptic and parabolic parts and perform the `hard' analysis only on the elliptic part. This will lead us to the operators
\begin{equation}\label{stand-rpaa}
 \mathcal{U}_\lambda :=\lambda\mathcal{E}_\lambda w^{-1}\div_xw \quad (\lambda > 0).
\end{equation}
These operators appeared in Lemma~\ref{le8-+} on off-diagonal estimates and in particular they are uniformly bounded on $(\L^2_\mu)^n$. Here we continue their analysis.

Given a cube $Q=Q_r(x)\subset\R^n$ and an interval $I=I_r(t)$, we let $\Delta:=Q\times I$ and set
 \begin{align*}
C_{k}(\Delta)=C_{k}(Q \times I)&:= 2^{k+1} \Delta \setminus 2^{k} \Delta \quad (k=1,2,\ldots),\\
C_0(\Delta) &:= 2 \Delta.
\end{align*}
In the following, we denote the characteristic function of a set $E$ by $1_E$. We use off-diagonal estimates to define $\mathcal{U}_\lambda$ on $(\L^\infty)^n$.

\begin{defn}
\label{welldef}
For $\mathbf{b} \in (\L^\infty)^n$ we define
\begin{equation}
\label{deffa}
\mathcal{U}_\lambda \mathbf{b} =: \lim_{k \to \infty} \mathcal{U}_\lambda (\mathbf{b} 1_{2^k \Delta}),
\end{equation}
with convergence locally in $({\L}^2_{\mu})^n$, where on the right $\Delta$ is any parabolic cube.
\end{defn}

Definition \ref{welldef} is meaningful and independent of the choice of $\Delta$ as we shall see next. To start, if  $\Delta'$ is any parabolic cube, then for $m > n$ large enough to guarantee that $\Delta' \subset 2^{n-1} \Delta$, Lemma~\ref{le8-+} yields
\begin{align*}
   \| \mathcal{U}_\lambda (\mathbf{b} 1_{2^m\Delta  \setminus 2^n \Delta}) \|_{\L^2_{\mu}(\Delta')}
   &\leq \sum_{j=n}^{m-1} \| \mathcal{U}_\lambda (\mathbf{b} 1_{C_{j}(\Delta) }) \|_{\L^2_{\mu}(\Delta')} \\
   & \lesssim \mu(\Delta)^{\frac{1}{2}}  \|\mathbf{b}\|_{\infty} \sum_{j=n}^{m} e^{-\frac{ \ell(\Delta) 2^{ j-1}}{c \lambda}} (4D)^{j+1}.
\end{align*}
Recall that $4D$ is the doubling constant for $\mu$, see \eqref{doublingc}. The right-hand side converges to $0$ as $m,n \to \infty$. In conclusion, $\{\mathcal{U}_\lambda (\mathbf{b} 1_{2^n\Delta})\}$ is a Cauchy sequence locally in $(\L^2_{\mu})^n$. By the same argument, Definition \ref{welldef}  is independent of of the particular choice $\Delta$. Taking $\Delta' = \Delta$ and $n=1$, we get
\begin{equation}
\begin{aligned}
    \label{ineqwell}
    \| \mathcal{U}_\lambda \mathbf{b} \|_{\L^2_{\mu}(\Delta)} & \leq  \| \mathcal{U}_\lambda (\mathbf{b} 1_{2\Delta}) \|_{\L^2_{\mu}(\Delta)} +  \| \mathcal{U}_\lambda (\mathbf{b} 1_{\ree \setminus 2 \Delta}) \|_{\L^2_{\mu}(\Delta)} \\ & \lesssim \mu(\Delta)^{\frac{1}{2}} \|\mathbf{b}\|_{\infty} \bigg( 1+ \sum_{j=1}^{\infty}  e^{-\frac{\ell(\Delta) 2^{ j-1}}{c \lambda}}(4D)^{j+1}\bigg).
\end{aligned}
\end{equation}

\begin{lem}\label{cor5.6} Let $\mathbf{b} \in (\L^\infty)^n$ and ${f}\in \L^2_\mu$. Then,
$$
\|(\mathcal{U}_\lambda \mathbf{b}) \mathcal{A}_{\lambda} {f}\|_{2,\mu} \lesssim \|\mathbf{b}\|_\infty \| {f}\|_{2,\mu}.
$$
\end{lem}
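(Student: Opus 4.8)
The quantity $\mathcal{U}_\lambda \mathbf{b}$ is only a locally square-integrable vector field, not a bounded one, so the estimate cannot follow from naive $\L^\infty$ bounds. The natural idea is to exploit the near-locality of both operators at scale $\lambda$: the dyadic average $\mathcal{A}_\lambda f(x,t)$ at a point depends only on the values of $f$ on the dyadic parabolic cube $\Delta$ of sidelength comparable to $\lambda$ containing $(x,t)$, and by the off-diagonal estimates of Lemma~\ref{le8-+}(ii) the values of $\mathcal{U}_\lambda \mathbf{b}$ on $\Delta$ are, up to exponentially small tails, controlled by $\mu(\Delta)^{1/2}\|\mathbf b\|_\infty$ via \eqref{ineqwell}. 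So I would decompose $\ree$ into the dyadic parabolic cubes $\Delta$ of generation determined by $\ell(\Delta)/2 < \lambda \leq \ell(\Delta)$, and write
\begin{align*}
\|(\mathcal{U}_\lambda \mathbf{b})\,\mathcal{A}_\lambda f\|_{2,\mu}^2 = \sum_\Delta \iint_\Delta |\mathcal{U}_\lambda \mathbf{b}|^2\, |\mathcal{A}_\lambda f|^2 \, \d\mu.
\end{align*}

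On each such $\Delta$, $\mathcal{A}_\lambda f$ is constant and equals the $\mu$-average of $f$ over $\Delta$ — here I would use that $w$ is time-independent and the parabolic cube is a product, so the unweighted average in $t$ composed with the $w$-average in $x$ is comparable (by the $A_2$/doubling property, cf.\ \eqref{ainfw}, \eqref{doublingc}) to $\bariint_\Delta f\, \d\mu$, whose square is bounded by $\mu(\Delta)^{-1}\iint_\Delta |f|^2\,\d\mu$ by Cauchy--Schwarz. Combined with \eqref{ineqwell}, which gives $\iint_\Delta |\mathcal{U}_\lambda \mathbf{b}|^2\,\d\mu \lesssim \mu(\Delta)\,\|\mathbf b\|_\infty^2$ (the geometric-type sum in \eqref{ineqwell} converges uniformly because $\ell(\Delta) \sim \lambda$, so each term is $e^{-c' 2^j}(4D)^{j+1}$ and sums to an absolute constant), the $\Delta$-summand is
\begin{align*}
\iint_\Delta |\mathcal{U}_\lambda \mathbf{b}|^2 |\mathcal{A}_\lambda f|^2\,\d\mu \lesssim \mu(\Delta)\,\|\mathbf b\|_\infty^2 \cdot \mu(\Delta)^{-1}\iint_\Delta |f|^2\,\d\mu = \|\mathbf b\|_\infty^2 \iint_\Delta |f|^2\,\d\mu.
\end{align*}
Summing over the disjoint cubes $\Delta$ of this fixed generation telescopes to $\|\mathbf b\|_\infty^2 \|f\|_{2,\mu}^2$, which is the claim.

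The main obstacle is making precise, and uniform in $\lambda$, the step that on a fixed cube $\Delta$ one may replace $\mathcal{U}_\lambda \mathbf{b}$ by the ``localized'' object $\mathcal{U}_\lambda(\mathbf{b}1_{2\Delta})$ plus tails, and absorb the tails: the subtlety is that the off-diagonal sum couples $\Delta$ to all dyadic cubes at the same scale, so after squaring and summing over $\Delta$ one must check that the double sum $\sum_\Delta \sum_{\Delta'} e^{-d(\Delta,\Delta')/(c\lambda)}(\cdots)$ still collapses to a single sum with constant depending only on structural data; this is a routine but slightly delicate Schur-test-type argument using that there are $O((4D)^j)$ cubes $\Delta'$ at $\mu$-distance $\sim 2^j \ell(\Delta)$ from $\Delta$ and that exponential decay beats the doubling growth. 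Alternatively, and perhaps more cleanly, I would phrase the whole estimate directly from \eqref{ineqwell} applied on each $\Delta$ (which already contains the tail sum internally), so that no separate Schur test is needed and the proof reduces to the average-comparison and the telescoping described above.
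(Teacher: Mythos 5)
Your argument follows the paper's proof almost exactly: decompose into the dyadic parabolic cubes of the generation with $\ell(\Delta)/2<\lambda\leq\ell(\Delta)$, apply \eqref{ineqwell} on each cube (which, as you note, already contains the tail sum, so no separate Schur test is needed), use that $\mathcal{A}_\lambda f$ is constant on each cube, and sum. The paper closes the argument by invoking the uniform $\L^2_\mu$-boundedness of $\mathcal{A}_\lambda$, which was recorded just after the definition of the averaging operators via the maximal function bounds of Section~\ref{maximal functions}.

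The one step of yours that is wrong as stated is the claim that the unweighted average $\mathcal{A}_\lambda f = \bariint_\Delta f\,\d y\,\d s$ is \emph{comparable} to the $\mu$-average $\bariint_\Delta f\,\d\mu$. No such comparability holds, even for non-negative $f$: the $A_2$ condition controls averages of $w$ and $w^{-1}$, not the ratio of the two averages of an arbitrary $f$ (take $f$ concentrated where $w$ is large). Fortunately the inequality you actually need is true and follows in one line from weighted Cauchy--Schwarz together with $A_2$:
\begin{align*}
|\mathcal{A}_\lambda f|^2 \;=\; \biggl|\frac{1}{|\Delta|}\iint_\Delta f\, w^{\frac12} w^{-\frac12}\,\d y\,\d s\biggr|^2
\;\leq\; \frac{\mu^{-1}(\Delta)}{|\Delta|^2}\iint_\Delta |f|^2\,\d\mu
\;\leq\; \frac{[w]_{A_2}}{\mu(\Delta)}\iint_\Delta |f|^2\,\d\mu,
\end{align*}
using $\mu(\Delta)\,\mu^{-1}(\Delta)\leq [w]_{A_2}|\Delta|^2$ in the last step. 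With this replacement (or by simply quoting the uniform $\L^2_\mu$-bound for $\mathcal{A}_\lambda$, which is exactly this inequality summed over the partition), your proof is complete and coincides with the paper's.
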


\begin{proof}
If $\Delta \subset \mathbb{R}^{n+1}$ is a parabolic cube such that $\ell(\Delta)/2 < \lambda \leq \ell(\Delta)$, then by \eqref{ineqwell} we have
\begin{align*}
\iint_{\Delta}|\mathcal{U}_\lambda \mathbf{b}|^2\, \d\mu\lesssim \mu(\Delta)\|\mathbf{b}\|^2_\infty.
\end{align*}
Since $\mathcal{A}_{\lambda}f$ is constant on each such $\Delta$, we obtain
\begin{align*}
 \iint_{\Delta} |(\mathcal{U}_\lambda \mathbf{b}) \mathcal{A}_{\lambda} {f}|^2 \, \d \mu
 \leq \iint_{\Delta} |\mathcal{U}_\lambda \mathbf{b}|^2\, \d\mu \cdot \bariint_{\Delta} |\mathcal{A}_{\lambda} {f}|^2\, \d\mu
 \lesssim   \|\mathbf{b}\|^2_\infty \iint_{\Delta} |\mathcal{A}_{\lambda} {f}|^2\, \d\mu.
\end{align*}
The claim follows by summing in $\Delta$ and using that $\mathcal{A}_\lambda$ is uniformly bounded on $\L^2_\mu$ with respect to $\lambda$, see Section~\ref{sec: weighted LP}.
\end{proof}

Writing $A=(A_1,\ldots, A_n)$ with $A_j$ the $j$-th column of $A$, we can use Definition~\ref{welldef} to define the action of $\mathcal{U}_\lambda$ on the bounded matrix-valued function $w^{-1}A$ by
\begin{align*}
	(\mathcal{U}_\lambda w^{-1}A) := \mathcal{U}_\lambda (w^{-1} A) := \Big(\mathcal{U}_{\lambda} (w^{-1} A_1),\cdot \cdot \cdot,\mathcal{U}_{\lambda}(w^{-1} A_n)\Big).
\end{align*}
We will approximate $\mathcal{U}_\lambda w^{-1} A$ by operators that act via multiplication on the maximal dyadic cubes of size at most $\lambda$. To be precise, we will consider
 \begin{align}\label{opa1}
\mathcal{R}_\lambda f := \mathcal{U}_\lambda (w^{-1}A f)   -(\mathcal{U}_\lambda w^{-1}A) \mathcal{A}_{\lambda}f.
 \end{align}
This is nowadays called the `principal part approximation'. Using Lemmas \ref{le8-} and \ref{cor5.6}, we see that the $\mathcal{R}_\lambda$ are uniformly bounded on $\L^2_\mu$ for $\lambda > 0$. Moreover, we prove the following bound.

\begin{prop}
\label{le10}
Let $f\in \L^2_\mu \cap \C^{\infty}$. Then,
\begin{align*}
	\|\mathcal{R}_\lambda f\|_{2,\mu}\lesssim \|\lambda \nabla_x f\|_{2,\mu}+\|\lambda^2 \partial_t f\|_{2,\mu}.
\end{align*}
\end{prop}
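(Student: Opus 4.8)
The plan is to run the classical ``principal part approximation'' argument of Auscher--Tchamitchian, adapted to the weighted parabolic setting; the two ingredients that make it work are the off-diagonal bounds of Lemma~\ref{le8-+}~(ii) for the \emph{spatial} operator $\mathcal{U}_\lambda=\lambda\mathcal{E}_\lambda w^{-1}\div_x w$ and a weighted parabolic Poincaré inequality. Fix $\lambda>0$ and let $\Delta$ range over the partition of $\ree$ into the dyadic parabolic cubes with $\ell(\Delta)/2<\lambda\le\ell(\Delta)$, so $\ell(\Delta)\in[\lambda,2\lambda)$ and $\mathcal{A}_\lambda$ acts on $\Delta$ as the constant $(f)_\Delta:=\bariint_\Delta f\,\d y\,\d s$. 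Since $\mathcal{U}_\lambda$ is linear and, by Definition~\ref{welldef}, acts on the bounded function $w^{-1}A(f)_\Delta$ with $(\mathcal{U}_\lambda w^{-1}A)(f)_\Delta=\mathcal{U}_\lambda(w^{-1}A(f)_\Delta)$, on $\Delta$ one has $\mathcal{R}_\lambda f=\mathcal{U}_\lambda(w^{-1}Af)-\mathcal{U}_\lambda(w^{-1}A(f)_\Delta)$. Using that $w^{-1}Af\in(\L^2_\mu)^n$, so $\mathcal{U}_\lambda(w^{-1}Af)=\lim_m\mathcal{U}_\lambda(1_{2^m\Delta}w^{-1}Af)$ in $\L^2_\mu$, and that $\mathcal{U}_\lambda(w^{-1}A(f)_\Delta)=\lim_m\mathcal{U}_\lambda(1_{2^m\Delta}w^{-1}A(f)_\Delta)$ locally in $(\L^2_\mu)^n$, then splitting $1_{2^m\Delta}=\sum_{k=0}^{m-1}1_{C_k(\Delta)}$, one arrives at the representation
\[
\mathcal{R}_\lambda f=\sum_{k=0}^{\infty}\mathcal{U}_\lambda\big(1_{C_k(\Delta)}\,w^{-1}A\,(f-(f)_\Delta)\big)\qquad\text{on }\Delta ,
\]
each summand lying in $\L^2_\mu(\Delta)$ and the series converging there, since by doubling $\|f-(f)_\Delta\|_{\L^2_\mu(2^k\Delta)}^2$ grows at most geometrically in $k$, which the next step will kill.

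Indeed, by Lemma~\ref{le8-+}~(ii), since $d\big(\Delta,C_k(\Delta)\big)\gtrsim 2^k\ell(\Delta)\ge 2^k\lambda$ for $k\ge1$ and $|w^{-1}A|\le c_2$,
\[
\big\|\mathcal{U}_\lambda\big(1_{C_k(\Delta)}\,w^{-1}A\,(f-(f)_\Delta)\big)\big\|_{\L^2_\mu(\Delta)}^2\lesssim \mathrm{e}^{-c\,2^k}\iint_{2^{k+1}\Delta}\big|f-(f)_\Delta\big|^2\,\d\mu ,
\]
and the case $k=0$ ($C_0(\Delta)=2\Delta$) is absorbed, with no exponential gain, by the uniform $\L^2_\mu$-boundedness of $\mathcal{U}_\lambda$. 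What remains is to estimate the oscillation $\iint_{2^{k+1}\Delta}|f-(f)_\Delta|^2\,\d\mu$ by the parabolic derivatives of $f$ over $2^{k+1}\Delta$.

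For this I would use a \emph{weighted parabolic Poincaré inequality}: for any parabolic cube $\Delta'=Q'\times I'$,
\[
\iint_{\Delta'}\big|f-(f)_{\Delta'}\big|^2\,\d\mu\lesssim \ell(\Delta')^2\iint_{\Delta'}|\nabla_x f|^2\,\d\mu+\ell(\Delta')^4\iint_{\Delta'}|\partial_t f|^2\,\d\mu ,
\]
proved by splitting $f-(f)_{\Delta',\mu}=\big(f-\barint_{I'}f\,\d s\big)+\big(\barint_{I'}f\,\d s-(f)_{\Delta',\mu}\big)$, treating the first summand by the one-dimensional Poincaré inequality in $t$ on $I'$ (of length $\ell(\Delta')^2$, hence the fourth power) integrated in $x$ against $\d w$, and the second, a function of $x$ alone, by the Fabes--Kenig--Serapioni weighted Poincaré inequality in $x$ on $Q'$ for the $A_2$ weight $w$; the $A_2$ condition also shows that the Lebesgue average and the $\mu$-average of $f$ over $\Delta'$ differ by at most a structural constant times the square root of the right-hand side, so $(f)_{\Delta'}$ may be taken as the Lebesgue average. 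Combined with the telescoping estimate
\[
\big|(f)_\Delta-(f)_{2^{k+1}\Delta}\big|\le\sum_{j=0}^{k}\big|(f)_{2^{j}\Delta}-(f)_{2^{j+1}\Delta}\big|\lesssim \sum_{j=0}^{k}\bariint_{2^{j+1}\Delta}\big|f-(f)_{2^{j+1}\Delta}\big|\,\d y\,\d s ,
\]
whose summands are controlled, via Cauchy--Schwarz against $w$ and the Poincaré inequality, by $2^{j}\lambda\big(\bariint_{2^{j+1}\Delta}|\nabla_x f|^2\,\d\mu\big)^{1/2}+4^{j}\lambda^2\big(\bariint_{2^{j+1}\Delta}|\partial_t f|^2\,\d\mu\big)^{1/2}$, and using $\mu(2^{k+1}\Delta)\lesssim(4D)^{k+1}\mu(\Delta)$ and $\ell(\Delta)<2\lambda$, this yields
\[
\iint_{2^{k+1}\Delta}\big|f-(f)_\Delta\big|^2\,\d\mu\lesssim Q(k)\Big(\lambda^2\iint_{2^{k+1}\Delta}|\nabla_x f|^2\,\d\mu+\lambda^4\iint_{2^{k+1}\Delta}|\partial_t f|^2\,\d\mu\Big),
\]
with $Q(k)$ growing at most exponentially in $k$ (a polynomial times a structural power).

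Finally, squaring the series for $\mathcal{R}_\lambda f$ on $\Delta$ by Cauchy--Schwarz in $k$ against $(\mathrm{e}^{-c2^k/2})_k$, summing over the cubes $\Delta$, and using that for fixed $k$ the cubes $\{2^{k+1}\Delta\}$ have bounded overlap $\lesssim 2^{(n+2)k}$, one gets
\[
\|\mathcal{R}_\lambda f\|_{2,\mu}^2\lesssim\Big(\sum_{k\ge0}2^{(n+2)k}\,Q(k)\,\mathrm{e}^{-c\,2^k}\Big)\big(\|\lambda\nabla_x f\|_{2,\mu}^2+\|\lambda^2\partial_t f\|_{2,\mu}^2\big)\lesssim\|\lambda\nabla_x f\|_{2,\mu}^2+\|\lambda^2\partial_t f\|_{2,\mu}^2 ,
\]
the $k$-series converging since $\mathrm{e}^{-c2^k}$ beats every exponential in $k$ (and there is nothing to prove when the right-hand side is infinite; the hypothesis $f\in\C^\infty$ only ensures that $\nabla_x f$, $\partial_t f$ are pointwise meaningful). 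The hard part will be the weighted parabolic Poincaré inequality --- getting the parabolic scaling right and checking its insensitivity to the choice of average --- which is precisely where the $A_2$ hypothesis and the product structure $\d\mu=\d w\,\d t$ enter; once that is in hand, the rest is the classical principal part bookkeeping, the only extra care being the routine justification, via the off-diagonal truncation of Definition~\ref{welldef}, of the series representation of $\mathcal{R}_\lambda f$ on $\Delta$.
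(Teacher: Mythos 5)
Your proposal is correct and follows essentially the same route as the paper: the representation of $\mathcal{R}_\lambda f$ on each dyadic parabolic cube $\Delta$ as $\mathcal{U}_\lambda(w^{-1}A(f-(f)_\Delta))$, the annular decomposition with the off-diagonal bounds of Lemma~\ref{le8-+}(ii), a weighted parabolic Poincar\'e inequality obtained by separating the $x$- and $t$-oscillations (plus a telescoping over the dilated cubes to reach $2^{k+1}\Delta$), and the final bounded-overlap summation in $k$ and $\Delta$. The only differences are cosmetic bookkeeping choices (telescoping averages rather than function differences, and the order of the space/time splitting in the Poincar\'e step).
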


For the proof, we need the following weighted Poincar{\'e}-type inequality. In the following we abbreviate $(f)_\Delta = (f)_{\Delta, \d x \d t}$.

\begin{lem}
\label{lem: Poincare}
Let $f \in \C^\infty$. Then, for all parabolic cubes $\Delta$ and all non-negative integers $k$,
\begin{align*}
   \iint_{C_k(\Delta)} |(f- (f)_{\Delta})|^2 \, \d \mu
    \leq c (k+1) \iint_{2^{k+1}\Delta} 2^{2k} \ell(\Delta)^2 |\nabla_x f|^2 +2^{4k} \ell(\Delta)^4 |\partial_tf|^2 \, \d \mu,
\end{align*}
where $c$ depends only on $n$ and $[w]_{A_2}$.
\end{lem}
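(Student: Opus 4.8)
The plan is to prove the inequality cube by cube by telescoping the difference $f - (f)_\Delta$ across the dyadic scales between $\Delta$ and $2^{k+1}\Delta$, controlling each scale-to-scale increment by a one-scale weighted Poincaré inequality with \emph{parabolic} scaling. Concretely, for a fixed parabolic cube $\Delta$ and a fixed $k$, I would write $f - (f)_\Delta = (f - (f)_{2^{k+1}\Delta}) + \sum_{j=0}^{k} ((f)_{2^{j+1}\Delta} - (f)_{2^{j}\Delta})$. Here all averages are with respect to $\d x\, \d t$ (unweighted), which is convenient because the parabolic cubes $2^j\Delta$ are genuine products $2^jQ \times 2^jI$ in the parabolic sense and $\d x\, \d t$ is a product measure adapted to them. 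The number of terms in the sum is $k+1$, which is the source of the $(k+1)$ factor in the statement: by Cauchy--Schwarz in the sum one pays a factor $(k+1)$ when squaring. I would then estimate $\iint_{C_k(\Delta)}|f - (f)_\Delta|^2\,\d\mu$ by $(k+1)$ times the sum over $j$ of $\iint_{C_k(\Delta)} |f - (f)_{2^{j}\Delta}|^2 \,\d\mu$-type terms, but it is cleaner to bound each increment $|(f)_{2^{j+1}\Delta} - (f)_{2^j\Delta}|$ pointwise by $\bariint_{2^{j+1}\Delta}|f - (f)_{2^{j+1}\Delta}|\,\d x\,\d t$ (using $2^j\Delta \subset 2^{j+1}\Delta$ and that these are nested with bounded overlap ratios), and then apply a single-scale Poincaré inequality on $2^{j+1}\Delta$.

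The single-scale ingredient I would isolate as follows: for a parabolic cube $\Delta' = Q' \times I'$ of size $\rho = \ell(\Delta')$ and $f \in \C^\infty$,
\begin{align*}
  \bariint_{\Delta'} |f - (f)_{\Delta'}|^2\,\d x\,\d t \lesssim \rho^2 \bariint_{\Delta'} |\nabla_x f|^2\,\d x\,\d t + \rho^4 \bariint_{\Delta'}|\partial_t f|^2\,\d x\,\d t,
\end{align*}
which is just the classical (unweighted) Poincaré inequality on the product cube $Q' \times I'$ with the anisotropic scaling $|I'| \sim \rho^2$, $\mathrm{diam}(Q') \sim \rho$: one splits $f(x,t) - f(y,s) = (f(x,t)-f(y,t)) + (f(y,t)-f(y,s))$, integrates the first difference along segments in $Q'$ and the second along $I'$, giving the $\rho^2|\nabla_x f|^2$ and $\rho^4|\partial_t f|^2$ terms respectively. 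The passage from this \emph{unweighted} average bound to the \emph{weighted} integral bound in the statement is where the $A_2$-hypothesis enters. After bounding $|(f)_{2^{j+1}\Delta} - (f)_{2^j\Delta}|^2$ by the unweighted average of $|\nabla_x f|^2$ and $|\partial_t f|^2$ over $2^{j+1}\Delta$ (times the appropriate powers of $2^{j}\ell(\Delta)$), I multiply by $\mu(C_k(\Delta)) \le \mu(2^{k+1}\Delta)$ and need to compare $\mu(2^{k+1}\Delta)$ with $|2^{j+1}\Delta|$ and push the weight inside the average over $2^{j+1}\Delta$. The tool is the $A_2$/reverse-Hölder machinery: for $E \subset 2^{j+1}\Delta$, $\mu(2^{k+1}\Delta)\, \bariint_{2^{j+1}\Delta} g\,\d x\,\d t \lesssim \mu(2^{k+1}\Delta)\, \tfrac{|2^{j+1}\Delta|}{\mu(2^{j+1}\Delta)}\,\bariint_{2^{j+1}\Delta} g\,\d\mu$ once $g \ge 0$, using the $A_\infty$-type comparison of unweighted and weighted averages (a consequence of \eqref{A2} together with \eqref{ainfw} or the $A_2$ condition applied directly on the cube $2^{j+1}\Delta$), and the ratio $\mu(2^{k+1}\Delta)/\mu(2^{j+1}\Delta)$ is controlled by the doubling constant $D$ to the power $k-j$, which is absorbed because summing the geometric increments against the parabolic-scaling gains $2^{2j}, 2^{4j}$ is balanced — more carefully, one keeps the final integral over $2^{k+1}\Delta$ and only needs $\mu(2^{k+1}\Delta)/\mu(2^{j+1}\Delta) \cdot (|2^{j+1}\Delta|/|2^{k+1}\Delta|)^{\,\#} \lesssim 1$, which follows from the two-sided bound \eqref{ainfw} relating $w(E)/w(Q)$ and $|E|/|Q|$ with exponents $2\eta$ and $1/(2\eta)$, so the degenerate behavior of $w$ only costs a fixed power and the factors $2^{2k}\ell(\Delta)^2$ and $2^{4k}\ell(\Delta)^4$ on the right-hand side generously dominate $2^{2j}\ell(\Delta)^2$ and $2^{4j}\ell(\Delta)^4$.

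The main obstacle, and the step deserving the most care, is exactly this last bookkeeping: one must check that when each increment $j \le k$ is estimated on its own scale $2^{j+1}\Delta$ and then re-expressed as an integral against $\d\mu$ over $2^{k+1}\Delta$ (enlarging the domain of integration and the weight mass), the resulting constants — namely the ratio of weighted masses $w(2^{k+1}Q)/w(2^{j+1}Q)$ versus the scaling gains $2^{2(k-j)}$ and $2^{4(k-j)}$ — combine to something uniformly bounded in $j$ and $k$, using only $[w]_{A_2}$ and $n$. Because \eqref{ainfw} gives $w(2^{k+1}Q)/w(2^{j+1}Q) \le \beta\, 2^{2\eta n (k-j)}$ while the spatial scaling gain is $2^{2(k-j)}$ and $2\eta < 1 \le 2$, the geometric series in $k-j$ converges; summing the $k+1$ terms then produces the asserted factor $c(k+1)$ with $c = c(n,[w]_{A_2})$. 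I would phrase the argument so that the unweighted product-cube Poincaré inequality and the $A_2$-comparison of averages are the only two external facts invoked, with everything else being the telescoping and the convergent geometric sum.
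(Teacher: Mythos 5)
Your overall route is the same as the paper's: a one‑scale Poincar\'e inequality on a parabolic cube obtained by treating the $x$‑ and $t$‑oscillations separately, followed by telescoping the averages from $\Delta$ up to $2^{k+1}\Delta$, with Cauchy--Schwarz on the $k+1$ increments producing the factor $k+1$. Two steps, however, do not close as you claim. First, the weighted one‑scale inequality is not obtainable from the unweighted product‑cube inequality by an ``$A_2$‑comparison of averages'': there is no inequality of the form $\barint_E g \, \d x \d t\lesssim \tfrac{|E|}{\mu(E)}\barint_E g\, \d\mu$ for general $g\geq 0$ (take $g$ supported where $w$ is small). This is why the paper imports the genuine weighted spatial Poincar\'e inequality from \cite[Thm.~15.26]{HKM} and only uses the classical one‑dimensional inequality in $t$, exploiting that $w$ is time‑independent. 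Second, and more seriously, the bookkeeping you single out as the main obstacle fails. You invoke \eqref{ainfw} in the wrong direction: for $2^{j+1}Q\subset 2^{k+1}Q$ it is the \emph{lower} bound in \eqref{ainfw} that controls $w(2^{k+1}Q)/w(2^{j+1}Q)$ from above, and it gives the exponent $n/(2\eta)$, not $2\eta n$; moreover \eqref{ainfw} only guarantees $\eta\in(0,1)$, so neither exponent is $\leq 2$ in general. Worse, once the increment at scale $j$ is estimated on $2^{j+1}\Delta$ and the resulting constant is integrated over $C_k(\Delta)$, the factor that appears is $\mu(2^{k+1}\Delta)/\mu(2^{j+1}\Delta)=4^{k-j}\,w(2^{k+1}Q)/w(2^{j+1}Q)$; the parabolic scaling gain $(2^{j}\ell(\Delta))^2$ versus the target $(2^{k}\ell(\Delta))^2$ cancels only the factor $4^{k-j}$, and the leftover ratio $w(2^{k+1}Q)/w(2^{j+1}Q)$ is \emph{not} uniformly bounded for an $A_2$ weight.

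In fact no bookkeeping can produce the constant as stated. Take $n=2$, $w(x)=|x|$ (which is in $A_2(\R^2)$, with $w(2^kQ)\sim 2^{3k}$ for $Q$ the unit cube centered at the origin), and let $f$ depend on $x$ alone, vanish on $2Q$ and equal $1$ outside $4Q$. Then the left‑hand side is at least $\mu(\{x\in 2^{k+1}Q\setminus 2^kQ\}\times 2^{k+1}I)\gtrsim 2^{3k}\cdot 4^{k}$, while the right‑hand side is $\lesssim (k+1)2^{2k}\cdot 4^{k}$, so the inequality with the factor $c(k+1)2^{2k}$ fails for large $k$. (The paper's own two‑line telescoping argument is silent on this point and has the same defect.) The honest outcome of the telescoping is the stated inequality with an additional factor $D^{k}$, where $D$ is the doubling constant of $w$, coming from $w(2^{k+1}Q)/w(2^{j+1}Q)\leq D^{k-j}$. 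This weaker form is all that is needed in the proof of Proposition~\ref{le10}, where the factor $e^{-2^k/c}$ absorbs any growth of the form $C^{k}$. You should therefore either prove and use that weaker statement, or carry the doubling factor explicitly through your estimate, rather than asserting that the constants are uniformly bounded in $j$ and $k$.
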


\begin{proof}
Let $\Delta = Q \times I$. We set $g:= (f)_{Q, \d x}$, which is a function of $t$, and we split
\begin{align*}
    f - (f)_{\Delta} = (f - (f)_{Q, \d x}) + (g - (g)_{I, \d t}).
\end{align*}
To the first term we can apply the weighted Poincar{\'e} inequality in the $x$-variable from (the proof of) \cite[Thm.~15.26]{HKM} and to the second term the standard Poincar{\'e} inequality in the $t$-variable. The result is
\begin{align*}
	\biggl(\iint_{\Delta} |(f- (f)_{\Delta}) |^2 \, \d \mu\biggr)^{\frac{1}{2}}
	\leq c \biggl(\iint_{\Delta} \ell(\Delta)^2 |\nabla_x f|^2 + \ell(\Delta)^4 |\partial_tf|^2 \, \d \mu\biggr)^{\frac{1}{2}}.
\end{align*}
Note that in \cite{HKM} balls are used instead of cubes, but doubling allows us to switch between one and the other. For the general result it suffices to write
\begin{align*}
    f - (f)_{\Delta} = \big(f - (f)_{2^{k+1}\Delta}\big)  + \big((f)_{2^{k+1}\Delta} - (f)_{2^{k}\Delta}\big) + \ldots + \big((f)_{2\Delta} - (f)_{\Delta}\big),
\end{align*}
and to use the estimate above on the cubes $2^{k+1}\Delta$ and $2^{k+1} \Delta, \ldots, 2 \Delta$.
\end{proof}

\begin{proof}[Proof of Proposition~\ref{le10}]
We note that if $(x,t) \in \R^{n+1}$ and $\lambda > 0$, then
\begin{align*}
	\mathcal{R}_\lambda f (x,t) = \mathcal{U}_\lambda  \Bigl(w^{-1} A(f- (f)_{\Delta})\Bigr) (x,t),
\end{align*}
where $\Delta$ is the unique dyadic parabolic cube with $\ell(\Delta)/2 < \lambda \leq \ell(\Delta)$ that contains $(x,t)$. Thus,
\begin{align*}
	\|\mathcal{R}_\lambda f\|_{2,\mu}^2
&= \sum_{\Delta} \iint_{\Delta} \Bigl| \mathcal{U}_\lambda  \Bigl(w^{-1} A(f- (f)_{\Delta})\Bigr) \Bigr|^2 \, \d \mu \\
&\leq \sum_{\Delta} \biggl(\sum_{k=0}^\infty \biggl( \iint_{\Delta} \Bigl| \mathcal{U}_\lambda \Bigl( w^{-1} A \cdot 1_{C_k(\Delta)}(f- (f)_{\Delta})\Bigr) \Bigr|^2 \, \d \mu \biggr)^{\frac{1}{2}}\biggr)^{2},
\end{align*}
and therefore
\begin{align*}
	\|\mathcal{R}_\lambda f\|_{2,\mu}^2
&\lesssim \sum_{\Delta} \biggl(\sum_{k=0}^\infty e^{-\frac{2^k}{c}} \biggl( \iint_{C_k(\Delta)} \Bigl|(f- (f)_{\Delta}) \Bigr|^2 \, \d \mu \biggr)^{\frac{1}{2}}\biggr)^{2} \\
&\lesssim \sum_{\Delta} \sum_{k=0}^\infty e^{-\frac{2^k}{c}}  \iint_{C_k(\Delta)} \Bigl|(f- (f)_{\Delta}) \Bigr|^2 \, \d \mu  \\
&\lesssim \sum_{\Delta} \sum_{k=0}^\infty e^{-\frac{2^k}{c}} (k+1) 2^{4k} \iint_{2^{k+1}\Delta} \lambda^2 |\nabla_x f|^2 + \lambda^4 |\partial_tf|^2 \, \d \mu \\
&\leq  \biggl(\sum_{k=0}^\infty e^{-\frac{2^k}{c}} (k+1) 2^{4k} 2^{(k+1)(n+2)}\biggr) \iint_{\R^{n+1}} \lambda^2 |\nabla_x f|^2 + \lambda^4 |\partial_tf|^2 \, \d \mu ,
\end{align*}
where we used, in succession, the off-diagonal estimates, Cauchy--Schwarz inequality, Lemma~\ref{lem: Poincare}, and the fact that each point in $\R^{n+1}$ is contained in exactly $2^{(k+1)(n+2)}$ of the cubes $2^{k+1} \Delta$. The sum in $k$ is still finite and the proof is complete.
\end{proof}
\section{Proof of Theorem \ref{thm:Kato}}
\label{sec2}

 After the reduction in Section~\ref{sec: reduction QE} it remains to prove the quadratic estimate \eqref{ea1intro} that we now write in the form
\begin{eqnarray}\label{kee}
      |||\lambda\mathcal{E}_\lambda \cH f|||_{2,\mu}\lesssim \|{\mathbb D}f\|_{2,\mu} \qquad (f \in \E_\mu).
 \end{eqnarray}
In the following we will use the operators $\P_\lambda$, $\mathcal{A}_\lambda$, $\mathcal{U}_\lambda$, $\mathcal{R}_\lambda$ that have been introduced in Section \ref{sec: parabolic operator}, ~\ref{sec: weighted LP} and \ref{sec: principal part approximation}. Collecting the estimates from these sections, we can at this stage prove the following.

\begin{prop}
\label{prop: upshot 5 and 7}
Let $f \in \E_\mu$. Then,
\begin{align*}
 |||(\lambda\mathcal{E}_\lambda \cH + (\mathcal{U}_\lambda w^{-1} A) \mathcal{A}_\lambda \nabla_x)f|||_{2,\mu}\lesssim \|{\mathbb D}f\|_{2,\mu}.
\end{align*}
\end{prop}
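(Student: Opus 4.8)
The plan is to expand $\lambda\mathcal{E}_\lambda\cH f$ using the factorisation \eqref{correct H factorization}, to split off the frequencies above $\lambda^{-1}$ with $\P_\lambda$, and then to produce the principal part $(\mathcal{U}_\lambda w^{-1}A)\mathcal{A}_\lambda\nabla_x f$ inside what remains. Write $f=\P_\lambda f+(I-\P_\lambda)f$; both summands lie in $\E_\mu$ since $\P_\lambda$ commutes with $\nabla_x$ and $\dhalf$ and is bounded on $\L^2_\mu$. Using $\lambda^2\cH\mathcal{E}_\lambda=I-\mathcal{E}_\lambda$ together with the commutation of $\mathcal{E}_\lambda$ and $\cH$ on $\E_\mu$ (a density argument from $\dom(\cH)$), the high-frequency piece equals $\lambda\mathcal{E}_\lambda\cH(I-\P_\lambda)f=\lambda^{-1}(I-\mathcal{E}_\lambda)(I-\P_\lambda)f$. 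For the low-frequency piece, \eqref{correct H factorization} gives $\cH\P_\lambda f=\dhalf\HT\dhalf\P_\lambda f-(w^{-1}\div_x w)(w^{-1}A\nabla_x\P_\lambda f)$, so that $\lambda\mathcal{E}_\lambda\cH\P_\lambda f=\lambda\mathcal{E}_\lambda\partial_t\P_\lambda f-\mathcal{U}_\lambda(w^{-1}A\nabla_x\P_\lambda f)$. Applying the principal part approximation \eqref{opa1} to the vector field $\nabla_x\P_\lambda f=\P_\lambda\nabla_x f$, and then using that $\mathcal{A}_\lambda$ is a dyadic averaging operator, hence idempotent ($\mathcal{A}_\lambda^2=\mathcal{A}_\lambda$), so that $\mathcal{A}_\lambda\nabla_x\P_\lambda f=\mathcal{A}_\lambda\nabla_x f+\mathcal{A}_\lambda(\P_\lambda-\mathcal{A}_\lambda)\nabla_x f$, the term $(\mathcal{U}_\lambda w^{-1}A)\mathcal{A}_\lambda\nabla_x f$ appears and cancels the one in the statement. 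Collecting everything,
\begin{align*}
 \lambda\mathcal{E}_\lambda\cH f+(\mathcal{U}_\lambda w^{-1}A)\mathcal{A}_\lambda\nabla_x f
 &=\lambda^{-1}(I-\mathcal{E}_\lambda)(I-\P_\lambda)f+\lambda\mathcal{E}_\lambda\partial_t\P_\lambda f\\
 &\qquad-\mathcal{R}_\lambda(\nabla_x\P_\lambda f)-(\mathcal{U}_\lambda w^{-1}A)\mathcal{A}_\lambda(\P_\lambda-\mathcal{A}_\lambda)\nabla_x f .
\end{align*}
It then suffices to estimate the square-function norm $|||\cdot|||_{2,\mu}$ of each of the four terms by $\|\mathbb{D}f\|_{2,\mu}$.

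Three of the four are routine. Since $\|\mathcal{E}_\lambda\|_{2\to2,\mu}\le 1$, the first term has square-function norm $\lesssim|||\lambda^{-1}(I-\P_\lambda)f|||_{2,\mu}\lesssim\|\mathbb{D}f\|_{2,\mu}$ by Lemma~\ref{little2}. For the third term, $\nabla_x\P_\lambda f$ is smooth and in $(\L^2_\mu)^n$, so Proposition~\ref{le10} gives $\|\mathcal{R}_\lambda(\nabla_x\P_\lambda f)\|_{2,\mu}\lesssim\|\lambda\nabla_x^2\P_\lambda f\|_{2,\mu}+\|\lambda^2\partial_t\nabla_x\P_\lambda f\|_{2,\mu}$; transferring one derivative onto the mollifier, $\lambda\nabla_x^2\P_\lambda f$ and $\lambda^2\partial_t\nabla_x\P_\lambda f$ act on $\nabla_x f$ through the Littlewood--Paley operators $\lambda\nabla_x\P_\lambda$ and $\lambda^2\partial_t\P_\lambda$ of Lemma~\ref{little1}, so that their square-function norms are $\lesssim\|\nabla_x f\|_{2,\mu}$. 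For the fourth term, Lemma~\ref{cor5.6} bounds its $\L^2_\mu$-norm by $\|(\P_\lambda-\mathcal{A}_\lambda)\nabla_x f\|_{2,\mu}$, and then Lemma~\ref{little3} bounds the resulting square-function norm by $\|\nabla_x f\|_{2,\mu}$.

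The remaining term $\lambda\mathcal{E}_\lambda\partial_t\P_\lambda f$ is the one I expect to be the crux, as it carries the full time derivative; but no fine property of the resolvent should be needed. Indeed $\|\mathcal{E}_\lambda\|_{2\to2,\mu}\le1$ once more, so it is enough to prove $|||\lambda\partial_t\P_\lambda f|||_{2,\mu}\lesssim\|\dhalf f\|_{2,\mu}$. The point is to keep the single power of $\lambda$ next to the time derivative and exploit the product structure of $\P_\lambda$: since $\P_\lambda^{(2)}$, $\HT$, $\dhalf$ all act in $t$ and commute, and $\partial_t=\dhalf\HT\dhalf$,
\begin{align*}
 \lambda\partial_t\P_\lambda f=\P_\lambda^{(1)}\,(\lambda\dhalf\P_\lambda^{(2)})\,\HT\dhalf f ,
\end{align*}
which in particular lies in $\L^2_\mu$ because $\HT\dhalf f\in\L^2_\mu$ while $\lambda\dhalf\P_\lambda^{(2)}$ and $\P_\lambda^{(1)}$ are bounded on $\L^2_\mu$. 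Now $\lambda\dhalf\P_\lambda^{(2)}$ is a Littlewood--Paley operator in the time variable and, exactly as in the proof of Lemma~\ref{little1} (Plancherel in $t$, then integrating in $x$ against $\d w$), one gets $|||(\lambda\dhalf\P_\lambda^{(2)})g|||_{2,\mu}\lesssim\|g\|_{2,\mu}$; taking $g=\HT\dhalf f$ and using the uniform boundedness of $\P^{(1)}_\lambda$ and $\HT$ on $\L^2_\mu$ finishes the estimate. This is precisely the re-grouping advertised in the introduction: the time derivative is packaged into a block $\lambda\partial_t\P_\lambda$ that only sees one-dimensional Fourier analysis in $t$ and a contractive resolvent, while all the genuine harmonic analysis (off-diagonal bounds, $\mathcal{R}_\lambda$, $\mathcal{A}_\lambda$) remains on the spatial side.
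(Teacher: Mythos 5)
Your proposal is correct and follows essentially the same route as the paper: the same split $f=\P_\lambda f+(I-\P_\lambda)f$ with $\lambda\cH\mathcal{E}_\lambda=\lambda^{-1}(I-\mathcal{E}_\lambda)$ and Lemma~\ref{little2} for the high frequencies, the factorization \eqref{correct H factorization} with the time-derivative block handled by the contractivity of $\mathcal{E}_\lambda$ plus one-dimensional Littlewood--Paley in $t$ (which is exactly the content of Lemma~\ref{little1}), and the principal part decomposition via $\mathcal{R}_\lambda$, $\mathcal{A}_\lambda^2=\mathcal{A}_\lambda$, Proposition~\ref{le10}, Lemma~\ref{cor5.6} and Lemma~\ref{little3}. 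The only cosmetic difference is that you re-derive the bound for $\lambda\partial_t\P_\lambda$ from the product structure instead of quoting Lemma~\ref{little1} applied to $\dhalf\HT f$.
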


\begin{proof}
We begin by writing
\begin{align}
\label{eq1: essence}
\lambda\mathcal{E}_\lambda \cH f=\lambda\mathcal{E}_\lambda \cH\P_\lambda f+\lambda \cH \mathcal{E}_\lambda (I-\P_\lambda) f.
\end{align}
Using the identity
$$\lambda \cH \mathcal{E}_\lambda=\lambda^{-1}(I-\mathcal{E}_\lambda),$$
the uniform $\L^2_\mu$-boundedness of $\mathcal{E}_\lambda$, and Lemma \ref{little2}, we see that
\begin{eqnarray*}
|||\lambda\mathcal{E}_\lambda \cH(I-\P_\lambda) f|||_{2,\mu}\lesssim |||\lambda^{-1}(I-\P_\lambda)f|||_{2,\mu}\lesssim\|{\mathbb D} f\|_{2,\mu}.
\end{eqnarray*}
Next we use \eqref{correct H factorization} to write
\begin{align}
\label{eq2: essence}
\lambda\mathcal{E}_\lambda \cH\P_\lambda f&= -\mathcal{U}_\lambda w^{-1}A\nabla_x\P_\lambda f+\lambda\mathcal{E}_\lambda \dhalf \HT \dhalf \P_\lambda f.
\end{align}
Using Lemma~\ref{le8-}(i) and then Lemma \ref{little1}, we see that
\begin{align}
\label{eq: essence t derivative}
\begin{split}
|||\lambda \mathcal{E}_\lambda \dhalf \HT \dhalf \P_\lambda f|||_{2,\mu}
&=|||\lambda \mathcal{E}_\lambda \dhalf \P_\lambda \dhalf \HT f|||_{2,\mu} \\
&\lesssim |||\lambda \dhalf \P_\lambda \dhalf \HT f|||_{2,\mu} \\
&\lesssim  \|\dhalf f\|_{2,\mu}.
\end{split}
\end{align}
Finally, we bring the principal part approximation into play. We use $\mathcal{U}_\lambda$ and $\mathcal{R}_\lambda$ to write
\begin{align}
\label{eq3: essence}
\begin{split}
\mathcal{U}_\lambda w^{-1}A\nabla_x\P_\lambda f
&= \mathcal{U}_\lambda w^{-1}A \P_\lambda\nabla_x f \\
&=\mathcal{R}_\lambda \P_\lambda \nabla_x f+(\mathcal{U}_\lambda w^{-1} A ) \mathcal{A}_{\lambda} (\P_\lambda -\mathcal{A}_{\lambda}) \nabla_x f  +(\mathcal{U}_\lambda w^{-1} A ) \mathcal{A}_{\lambda} \nabla_x f,
\end{split}
\end{align}
where we have also used that $(\mathcal{A}_\lambda)^2 = \mathcal{A}_\lambda$ for the last term.
Applying  Proposition~\ref{le10} and Lemma~\ref{little1}, we have
\begin{eqnarray*}
|||\mathcal{R}_\lambda \P_\lambda \nabla_x f|||_{2,\mu}
\lesssim |||\lambda \nabla_x  \P_\lambda \nabla_x f|||_{2,\mu}+|||\lambda^{2} \partial_t  \P_\lambda \nabla_x f|||_{2,\mu}
\lesssim \|{\mathbb D} f\|_{2,\mu}.
\end{eqnarray*}
Also, by Lemma \ref{little3} and Lemma \ref{cor5.6}, we have
\begin{align*}
   |||(\mathcal{U}_\lambda w^{-1} A ) \mathcal{A}_{\lambda} (\P_\lambda -\mathcal{A}_{\lambda}) \nabla_x f|||_{2,\mu} \lesssim  ||| (\mathcal{A}_{\lambda}-\P_\lambda) \nabla_x f|||_{2,\mu} \lesssim \| \nabla_x f\|_{2,\mu}.
\end{align*}
Looking back at the successive splittings in \eqref{eq1: essence}, \eqref{eq2: essence}, \eqref{eq3: essence}, we see that the only term that has not been treated in the square function norm is $(\mathcal{U}_\lambda w^{-1} A) \mathcal{A}_\lambda \gradx f$. This proves the claim.
\end{proof}

To conclude the square function estimate for the final term $(\mathcal{U}_\lambda w^{-1} A) \mathcal{A}_\lambda \gradx f$, we establish Lemma \ref{ilem2--}  below. The lemma states that $$|\mathcal{U}_\lambda w^{-1}A|^2\, \frac{\d \mu \d\lambda}{\lambda}$$ is a Carleson measure and that we have good control of the constants. Hence,
\begin{eqnarray*}
|||(\mathcal{U}_\lambda w^{-1}A)\mathcal{A}_{\lambda} \nabla_x f|||_{2,\mu}\lesssim\| \nabla_x f\|_{2,\mu}
\end{eqnarray*}
follows by Carleson's inequality for parabolic cubes, see Lemma \ref{lcarleson}.  This completes the proof of the estimate in \eqref{kee}, and hence the proof of Theorem \ref{thm:Kato} modulo Lemma \ref{ilem2--}. The reader should observe that in our proof of \eqref{kee} we have split off the time derivative $\partial_t$ from $\cH$ and we have controlled the part coming from $\partial_t$ by a standard Littlewood-Paley estimate in \eqref{eq: essence t derivative}

For convenience, we include a proof of the version of Carleson's inequality that is used above. We adapt the elegant dyadic argument found in \cite[Thm. 4.3]{Morris}.
\begin{lem} \label{lcarleson}
Let $\nu$ be a Borel measure on $\ree \times \R^+$ that satisfies
    \begin{align*}
        \|\nu\|_{\mathcal{C}} := \sup_{\Delta} \frac{\nu(\Delta \times (0,\ell(\Delta)])}{\mu(\Delta)} < \infty,
    \end{align*}
    where the supremum is taken over all dyadic parabolic cubes $\Delta \subset \ree$. Then there is a constant $c$ that only depends on $n$ and $[w]_{A_2}$ such that for every $f \in \L^2_{\mu}$,
\begin{align*}
   \int_0^\infty \iint_{\R^{n+1}} |\mathcal{A}_{\lambda}  f(x,t)|^2 \d \nu(x,t,\lambda) \leq c \|\nu\|_{\mathcal{C}} \iint_{\ree} |f|^2 \, \d \mu.
\end{align*}
\end{lem}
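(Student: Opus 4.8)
\emph{The plan} is to discretise the $\lambda$-integral, turn the estimate into a summation over dyadic parabolic cubes governed by a packing (Carleson) condition, and then run a standard Calder\'on--Zygmund stopping-time argument whose output is controlled by a maximal function.

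First I would partition $\ree\times(0,\infty)$ into the Carleson boxes (``tents'')
\[
 T_\Delta := \Delta\times(\ell(\Delta)/2,\,\ell(\Delta)],\qquad \Delta \text{ a dyadic parabolic cube}.
\]
Every $(x,t,\lambda)$ lies in exactly one $T_\Delta$, namely the one attached to the dyadic parabolic cube of size in $[\lambda,2\lambda)$ containing $(x,t)$, and on $T_\Delta$ the operator $\mathcal{A}_\lambda$ acts as the constant $(f)_\Delta := \bariint_\Delta f\,\d y\,\d s$. Hence, with $a_\Delta := \nu(T_\Delta)\ge 0$,
\[
 \int_0^\infty\iint_{\ree}|\mathcal{A}_\lambda f|^2\,\d\nu = \sum_\Delta a_\Delta\,|(f)_\Delta|^2 .
\]
Moreover, for a fixed dyadic parabolic cube $\Delta_0$ the boxes $\{T_\Delta : \Delta\subseteq\Delta_0\}$ are pairwise disjoint and contained in $\Delta_0\times(0,\ell(\Delta_0)]$, so the hypothesis yields the discrete packing condition $\sum_{\Delta\subseteq\Delta_0}a_\Delta\le\nu(\Delta_0\times(0,\ell(\Delta_0)])\le\|\nu\|_{\mathcal{C}}\,\mu(\Delta_0)$.

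Next I would estimate the sum by the layer-cake identity and Tonelli,
\[
 \sum_\Delta a_\Delta|(f)_\Delta|^2 = \int_0^\infty 2s\Big(\sum_{\Delta:\,|(f)_\Delta|>s}a_\Delta\Big)\,\d s .
\]
For each $s>0$ let $\{\Delta_j^s\}$ be the maximal dyadic parabolic cubes with $|(f)_\Delta|>s$; these exist and are pairwise disjoint because $|(f)_\Delta|\le\bariint_\Delta|f|\to 0$ as $\ell(\Delta)\to\infty$ (by Cauchy--Schwarz and the $A_2$ condition, $\bariint_\Delta|f|\le[w]_{A_2}^{1/2}\mu(\Delta)^{-1/2}\|f\|_{2,\mu}$, and $\mu(\Delta)\to\infty$; here we use $f\in\L^2_\mu\subset\Lloc^1$, cf.\ \eqref{embedd}). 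Since every cube with $|(f)_\Delta|>s$ sits inside some $\Delta_j^s$, the packing condition gives
\[
 \sum_{\Delta:\,|(f)_\Delta|>s}a_\Delta \le \sum_j\sum_{\Delta\subseteq\Delta_j^s}a_\Delta \le \|\nu\|_{\mathcal{C}}\sum_j\mu(\Delta_j^s) = \|\nu\|_{\mathcal{C}}\,\mu\Big(\bigcup_j\Delta_j^s\Big) \le \|\nu\|_{\mathcal{C}}\,\mu(\{M_{\mathrm{d}}f>s\}),
\]
where $M_{\mathrm{d}}f(x,t):=\sup_{\Delta\ni(x,t)}\bariint_\Delta|f|\,\d y\,\d s$ is the dyadic parabolic maximal function formed with \emph{Lebesgue} averages. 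Integrating in $s$ then yields
\[
 \int_0^\infty\iint_{\ree}|\mathcal{A}_\lambda f|^2\,\d\nu \le \|\nu\|_{\mathcal{C}}\iint_{\ree}(M_{\mathrm{d}}f)^2\,\d\mu .
\]

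It remains to control $M_{\mathrm{d}}f$ in $\L^2_\mu$, and this is the only place the weight really intervenes: pointwise on $Q\times I$ one has $\bariint_{Q\times I}|f|\,\d y\,\d s\le\mathcal{M}^{(1)}(\mathcal{M}^{(2)}f)$, hence $M_{\mathrm{d}}f\le\mathcal{M}^{(1)}\mathcal{M}^{(2)}f$, and the iterated maximal operator is bounded on $\L^2_\mu$ by Section~\ref{maximal functions} --- precisely where the $A_2$-property of $w$ enters. Thus $\iint_{\ree}(M_{\mathrm{d}}f)^2\,\d\mu\lesssim\|f\|_{2,\mu}^2$ and the lemma follows. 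I expect this last step to be the main (conceptual) obstacle: it is tempting to first replace the unweighted averages $(f)_\Delta$ by $w$-averages and quote the classical weighted dyadic Carleson embedding theorem, but $\mathcal{A}_\lambda$ is defined with unweighted averages, there is no pointwise domination of $(f)_\Delta$ by a $w$-average, and the ``$\L^1$-form'' of the Carleson embedding fails. Keeping Lebesgue averages throughout the stopping-time argument while measuring the exceptional sets with $\mu$ is what makes the proof go through, since it reduces matters to an $\L^2_\mu$-bound for an unweighted maximal function, valid exactly because $w\in A_2$ (equivalently, $w\,\d x\,\d t$ is a parabolic $A_2$-weight on $\ree$). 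The remaining measurability bookkeeping (the $T_\Delta$ are Borel and tile $\ree\times(0,\infty)$) and the trivial reduction to $f\not\equiv 0$ are routine.
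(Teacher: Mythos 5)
Your argument is correct and is essentially the paper's own proof (itself adapted from Morris's dyadic argument): the same tiling of $\ree\times(0,\infty)$ into Whitney boxes, the same layer-cake plus stopping-time selection of maximal cubes, the same packing condition, and the same final reduction to the $\L^2_\mu$-boundedness of $\mathcal{M}^{(1)}\mathcal{M}^{(2)}$. Your added justification that maximal cubes exist (averages vanish as $\ell(\Delta)\to\infty$ via Cauchy--Schwarz and $A_2$) is a detail the paper leaves implicit.
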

\begin{proof}
For $i\in\mathbb Z$ let $\{\Delta_i^j\}_j$ be the partition of $\mathbb R^{n+1}$ into dyadic parabolic cubes such that $\ell(\Delta_i^j)=2^i$. We have
\begin{align*}
    \int_0^\infty \iint_{\R^{n+1}} |\mathcal{A}_{\lambda}  f(x,t)|^2 \d \nu(x,t,\lambda)
    &=  \sum_{i= -\infty}^{\infty} \sum_{j} \biggl|\bariint_{\Delta^j_i} f \, \d y \d s\biggr|^2 \nu(\Delta^j_i \times (2^{i-1},2^i])\\
    &= \sum_{i= -\infty}^{\infty} \sum_{j} |f_i^j|^2 \nu_i^j ,
\end{align*}
where we have introduced $\nu_i^j:= \nu(\Delta^j_i \times (2^{i-1},2^i])$ and $f_i^j:=\bariint_{\Delta^j_i} f \, \d y \d s$. For $r>0$ let $\{\Delta_k(r)\}_k$ be the collection of maximal dyadic parabolic cubes $\Delta_i^j$ such that $|f_i^j| >r$. Note that these cubes are pairwise disjoint and contained in $\{\Max^{(1)} \Max^{(2)} f > r \}$. Hence,
\begin{align*}
   \sum_{i= -\infty}^{\infty} \sum_{j} |f_i^j|^2 \nu_i^j
   	&= \int_0^{\infty} 2r  \sum_{i=-\infty}^{\infty} \sum_{j} 1_{\{|f_i^j| > r\}}  \nu_i^j  \d r \\
 	&\leq \int_{0}^{\infty} 2r \sum_k \sum_{\Delta \subset \Delta_k(r)} \nu(\Delta \times (\ell(\Delta)/2,\ell(\Delta)] ) \, \d r \\
 	&= \int_0^{\infty} 2r \sum_k \nu(\Delta_k(r) \times (0,\ell(\Delta_k(r))]) \, \d r \\
	& \leq \|\nu\|_{\mathcal{C}} \int_0^{\infty} 2r \sum_k \mu(\Delta_k(r)) \, \d r \\
 	& \leq \|\nu\|_{\mathcal{C}} \int_0^{\infty} 2r \mu(\{ \mathcal{M}^{(1)} \mathcal{M}^{(2)}f > r\}) \, \d r \\
 	& = \|\nu\|_{\mathcal{C}} \|\mathcal{M}^{(1)} \mathcal{M}^{(2)}f\|_{2,\mu}^2.
\end{align*}
Now, the claim follows from the Hardy--Littlewood--Muckenhoupt inequality.
\end{proof}

The rest of the section is devoted to the proof of the following lemma.

\begin{lem}
 \label{ilem2--}
For all dyadic parabolic cubes $\Delta=Q\times I\subset\mathbb R^{n+1}$,
\begin{eqnarray*}\label{crucacar+}\int_0^{\ell(\Delta)}\iint_{\Delta}|\mathcal{U}_\lambda w^{-1}A|^2\frac {\d\mu\d\lambda}\lambda\lesssim \mu(\Delta).
\end{eqnarray*}
\end{lem}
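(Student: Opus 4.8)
The standard route to such Carleson measure estimates is a $Tb$-argument, and that is what I would carry out here. Fix a dyadic parabolic cube $\Delta = Q \times I$. By a stopping-time argument it suffices to produce, for each $\Delta$, a family of test functions $b_\Delta = b_\Delta^{Q} \otimes b_\Delta^{I}$ built with a product structure (one factor depending on $x$, one on $t$), enjoying three properties: (a) an $\L^2_\mu$-bound $\iint_{2\Delta} |b_\Delta|^2 \, \d\mu \lesssim \mu(\Delta)$ together with a companion bound on $\ell(\Delta)\nabla_x b_\Delta$ and $\ell(\Delta)^2 \partial_t b_\Delta$; (b) an accretivity-type lower bound, namely that the average of $b_\Delta$ over $\Delta$ (or over each subcube at a suitable scale) stays close to a fixed invertible matrix, so that $\mathcal{A}_\lambda b_\Delta$ can be inverted; and (c) a smallness estimate showing that $\mathcal{U}_\lambda w^{-1} A$ applied to $b_\Delta$ is controlled — concretely that $\int_0^{\ell(\Delta)} \iint_\Delta |(\mathcal{U}_\lambda w^{-1}A)\,\mathcal{A}_\lambda b_\Delta - \mathcal{U}_\lambda(w^{-1}A\, b_\Delta)|^2 \tfrac{\d\mu\d\lambda}{\lambda}$ is small, which is exactly where the principal part approximation $\mathcal{R}_\lambda$ from Proposition~\ref{le10} and the off-diagonal bounds of Lemma~\ref{le8-+} enter. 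A John--Nirenberg / good-$\lambda$ iteration over the stopping cubes then upgrades the local bound to the full Carleson bound.

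\textbf{Construction of the test functions.} For the elliptic factor, since $w^{-1}A$ is uniformly elliptic in the classical sense, I would, for each unit vector $e \in \IC^n$, solve a regularized degenerate elliptic equation on a dilate of $Q$ — essentially setting $b^{Q} = \ell(\Delta)\,\nabla_x \phi$ where $\phi$ solves $-w^{-1}\div_x(w^{-1}A\,\nabla_x \phi) + \ell(\Delta)^{-2}\phi = \ell(\Delta)^{-2}(e \cdot x)$ on $2\Delta$, or rather its parabolic analogue $\mathcal{E}_{\ell(\Delta)}$ applied to an affine function, exploiting that this is exactly the operator $\mathcal{U}_\lambda w^{-1} A$ is built from. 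The point of the product structure is that the time factor can be taken as a simple smooth bump $b^{I}$ adapted to $I$, because — as the authors emphasize in the introduction — the $t$-derivative splits off and only needs crude $\L^2_\mu$ resolvent estimates, not fine harmonic analysis. The $A_2$-weighted Poincaré inequality (Lemma~\ref{lem: Poincare}) and the Caccioppoli-type energy estimates coming from Lemma~\ref{le8-+} give the required $\L^2_\mu$ and gradient bounds on $b_\Delta$, with constants depending only on the structural constants.

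\textbf{Putting it together.} Once the test functions are in hand, the algebra is the usual one: write $\mathcal{U}_\lambda w^{-1}A = (\mathcal{U}_\lambda w^{-1}A)\,\mathcal{A}_\lambda b_\Delta\,(\mathcal{A}_\lambda b_\Delta)^{-1}$ on the good set, use (c) to replace $(\mathcal{U}_\lambda w^{-1}A)\,\mathcal{A}_\lambda b_\Delta$ by $\mathcal{U}_\lambda(w^{-1}A\,b_\Delta)$ up to a square-function-small error controlled by Proposition~\ref{le10} and Lemma~\ref{little1}, and then observe that $\mathcal{U}_\lambda(w^{-1}A\,b_\Delta)$ itself is small because $w^{-1}A\,b_\Delta = \ell(\Delta)\,w^{-1}A\,\nabla_x\phi \otimes b^I$ and $\mathcal{U}_\lambda = \lambda \mathcal{E}_\lambda w^{-1}\div_x w$ sees the derivative, turning this into a resolvent estimate of the form in Lemma~\ref{le8-}(iii) and the off-diagonal bound Lemma~\ref{le8-+}(ii). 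Summing the geometric series in the Whitney scales and invoking the stopping-time decomposition closes the argument.

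\textbf{Main obstacle.} I expect the genuine difficulty to be property (b), the quantitative invertibility of $\mathcal{A}_\lambda b_\Delta$ — i.e.\ proving that the dyadic average of the constructed gradient stays uniformly close to a nondegenerate matrix at all scales down to $\lambda$. In the $A_2$-degenerate setting the natural averages are with respect to $\d\mu$ while the construction is most transparent with respect to $\d x\d t$, and reconciling the two requires careful use of the $A_\infty$ comparison \eqref{ainfw}; moreover the accretivity must survive the regularization parameter $\ell(\Delta)^{-2}$. A secondary technical point is making the product ansatz $b_\Delta = b^Q \otimes b^I$ genuinely compatible with the non-autonomous operator: one has to check that freezing the time behaviour into a fixed bump does not destroy the lower bound, which is plausible precisely because the coefficient dependence on $t$ only enters through the coercive elliptic part and never through $\partial_t$, but it needs to be verified.
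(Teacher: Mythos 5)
Your plan is essentially the paper's argument: the test functions there are exactly of your form, $f^\zeta_{\Delta,\epsilon}=\mathcal{E}_{\epsilon\ell(\Delta)}L^\zeta_{\Delta}$ with $L^\zeta_{\Delta}$ a cutoff affine function in product form, the smallness in your step (c) is packaged in Proposition~\ref{prop: upshot 5 and 7} (principal part approximation plus Littlewood--Paley) together with the identity $\cH f^\zeta_{\Delta,\epsilon}=(L^\zeta_{\Delta}-f^\zeta_{\Delta,\epsilon})/(\epsilon\ell(\Delta))^2$, and the accretivity/stopping-time step is carried out exactly as you anticipate, reconciling $\d x\,\d t$- and $\d\mu$-averages via \eqref{ainfw}. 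The only cosmetic difference is that instead of inverting a matrix-valued $\mathcal{A}_\lambda b_\Delta$ the paper runs the scalar variant of \cite{AHLMcT}, covering $\IC^n$ by cones $C_\zeta^\epsilon$ over a finite set of directions $\zeta$ and using their vector lemma to get $|\mathcal{U}_\lambda w^{-1}A|\leq 4|(\mathcal{U}_\lambda w^{-1}A)\cdot\mathcal{A}_\lambda\nabla_x f^\zeta_{\Delta,\epsilon}|$ on the good Whitney boxes.
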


As in most related work, the proof of Lemma \ref{ilem2--} is based on the use of appropriate local $Tb$-type test functions.
\subsection{Construction of appropriate local Tb-type test functions}\label{subtb}

Let $\zeta\in \IC^{n}$ with $|\zeta|=1$ and let  $\zeta_{i}$ denote the $i$-th component of $\zeta$ for $1\leq i\leq n$.
We let $\chi, \eta$ be smooth functions on $\R^{n}$ and $\R$, respectively, whose values are in $[0,1]$. The function $\chi$ is equal to $1$ on $[-1/2,1/2]^n$ and has support in $(-1,1)^{n}$, and $\eta$ is equal to $1$ on $[-1/4,1/4]$ with support in $(-1,1)$. We fix a parabolic dyadic cube $\Delta$ and denote its center by $(x_{\Delta}, t_{\Delta})$. We first introduce
\begin{align*}
 \chi_{\Delta}(x,t) &:= \chi \bigg( \frac{x-x_{\Delta}}{\ell(\Delta)}\bigg) \eta \bigg(\frac{t-t_{\Delta}}{\ell(\Delta)^2}\bigg).
 \end{align*}
Based on $\zeta$ and $\chi_{\Delta}$, we introduce
\begin{align*}
 L^\zeta_{\Delta}(x,t):= \chi_{\Delta}(x,t)(\Phi_\Delta(x)\cdot \overline{\zeta}),\ \Phi_\Delta(x):=(x-x_\Delta).
\end{align*}
Clearly, $L^\zeta_{\Delta} \in \E_\mu$. Using the function $ L^\zeta_{\Delta}$ and  $0<\epsilon\ll 1$, we define the test function
\begin{align}\label{testfunction}
 f^\zeta_{\Delta,\epsilon}:=\mathcal{E}_{\epsilon\ell(\Delta)} L^\zeta_{\Delta}=(I+(\epsilon\ell(\Delta))^2\cH)^{-1} L^\zeta_{\Delta}.
 \end{align}

\begin{lem}
\label{laa}
Let $\zeta\in \IC^{n}$ with $|\zeta|=1$ and $0<\epsilon\ll 1$ be a degree of freedom. Given a parabolic dyadic cube $\Delta$, define $f^\zeta_{\Delta,\epsilon }$ as in  \eqref{testfunction}. Then,
\begin{align*}
\mathrm{(i)}&\quad \|f^\zeta_{\Delta, \epsilon }-L^\zeta_{\Delta}\|_{2,\mu}^2\, \d\mu\lesssim (\epsilon \ell(\Delta))^2\mu(\Delta),\notag\\
\mathrm{(ii)}&\quad \|{\mathbb D}(f^\zeta_{\Delta,\epsilon}-L^\zeta_{\Delta})\|_{2,\mu}^2\, \d\mu\lesssim \mu(\Delta),\notag\\
\mathrm{(iii)}&\quad \| \mathbb{D} f^\zeta_{\Delta, \epsilon}\|^2_{2,\mu} \lesssim \mu(\Delta).
\end{align*}
\end{lem}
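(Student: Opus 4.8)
The plan is to reduce all three statements to the resolvent bounds of Lemma~\ref{le8-}, fed with the clean right-hand side produced by the factorization \eqref{correct H factorization}. Abbreviate $\lambda_0 := \epsilon\ell(\Delta)$, $L := L^\zeta_{\Delta}$, and $g := f^\zeta_{\Delta,\epsilon}-L$. From $(I+\lambda_0^2\cH)f^\zeta_{\Delta,\epsilon}=L$ in $(\E_\mu)^*$ one subtracts $(I+\lambda_0^2\cH)L$ to get $(I+\lambda_0^2\cH)g=-\lambda_0^2\cH L$, hence
\[
  g=-\lambda_0^2\,\mathcal{E}_{\lambda_0}\cH L .
\]
Plugging in \eqref{correct H factorization} and setting $\mathbf{G}:=-w^{-1}A\nabla_x L\in(\L^2_\mu)^n$, this becomes
\[
  g=-\lambda_0^2\,\mathcal{E}_{\lambda_0}\dhalf\big(\HT\dhalf L\big)-\lambda_0^2\,\mathcal{E}_{\lambda_0}\,w^{-1}\div_x(w\mathbf{G}).
\]

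Next I would record the two ``data estimates'' $\|\nabla_x L\|_{2,\mu}^2+\|\dhalf L\|_{2,\mu}^2\lesssim\mu(\Delta)$, whence also $\|\mathbf{G}\|_{2,\mu}^2+\|\HT\dhalf L\|_{2,\mu}^2\lesssim\mu(\Delta)$ by the ellipticity bound $|w^{-1}A\xi|\le c_2|\xi|$ and the fact that $\HT$ is isometric on $\L^2_\mu$. The gradient bound is immediate: $|\nabla_x L|\lesssim1$ pointwise, $\supp\nabla_x L$ lies in a fixed dilate $c\Delta$, and $\mu$ is doubling. The only slightly delicate point is $\dhalf L$: for each $x$, $L(x,\cdot)$ is a scalar multiple — of modulus $\lesssim\ell(\Delta)$, and vanishing unless $x\in cQ$ — of the fixed profile $t\mapsto\eta\big((t-t_\Delta)/\ell(\Delta)^2\big)$; the parabolic scaling identity $\|\dhalf[\eta(\cdot/s^2)]\|_{\L^2(\R)}=\|\dhalf\eta\|_{\L^2(\R)}$ makes the $t$-integral independent of $\ell(\Delta)$, and integrating in $\d w(x)$ over $cQ$ together with $w(Q)\ell(\Delta)^2\simeq\mu(\Delta)$ gives the bound. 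In particular this also shows $\|\mathbb{D}L\|_{2,\mu}^2\lesssim\mu(\Delta)$.

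With these in hand, (i) and (ii) follow by inserting Lemma~\ref{le8-}(ii) (with $f:=\HT\dhalf L$) and Lemma~\ref{le8-}(iii) (with $\mathbf{f}:=\mathbf{G}$) into the formula for $g$. These give
\[
  \|\lambda_0\mathcal{E}_{\lambda_0}\dhalf(\HT\dhalf L)\|_{2,\mu}+\|\lambda_0^2\mathbb{D}\mathcal{E}_{\lambda_0}\dhalf(\HT\dhalf L)\|_{2,\mu}\lesssim\|\HT\dhalf L\|_{2,\mu}\lesssim\mu(\Delta)^{1/2},
\]
\[
  \|\lambda_0\mathcal{E}_{\lambda_0}w^{-1}\div_x(w\mathbf{G})\|_{2,\mu}+\|\lambda_0^2\mathbb{D}\mathcal{E}_{\lambda_0}w^{-1}\div_x(w\mathbf{G})\|_{2,\mu}\lesssim\|\mathbf{G}\|_{2,\mu}\lesssim\mu(\Delta)^{1/2}.
\]
Since $g$ carries one factor $\lambda_0^2=\lambda_0\cdot\lambda_0$, the $\L^2_\mu$-bound for $g$ picks up an extra $\lambda_0$, so $\|g\|_{2,\mu}\lesssim\lambda_0\mu(\Delta)^{1/2}$, which is (i) after squaring; applying the $\lambda_0^2\mathbb{D}\mathcal{E}_{\lambda_0}$ bounds directly gives $\|\mathbb{D}g\|_{2,\mu}\lesssim\mu(\Delta)^{1/2}$, i.e.\ (ii). Finally (iii) is the triangle inequality $\|\mathbb{D}f^\zeta_{\Delta,\epsilon}\|_{2,\mu}\le\|\mathbb{D}g\|_{2,\mu}+\|\mathbb{D}L\|_{2,\mu}$ combined with (ii) and the data estimate for $L$.

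I do not anticipate a genuine obstacle. The two points requiring care are the duality bookkeeping passing from $(I+\lambda_0^2\cH)f^\zeta_{\Delta,\epsilon}=L$ to $g=-\lambda_0^2\mathcal{E}_{\lambda_0}\cH L$ — which relies on $\mathcal{E}_{\lambda_0}$ inverting $I+\lambda_0^2\cH\colon\E_\mu\to(\E_\mu)^*$, as recorded in \eqref{resolvents} — and the weight-sensitive estimate $\|\dhalf L\|_{2,\mu}^2\lesssim\mu(\Delta)$, for which the product structure ``$x$-cutoff times $t$-cutoff, affine in $x$'' of $L^\zeta_{\Delta}$ is precisely what forces both the size and the support of $\cH L$ to behave like those of a normalized bump adapted to $\Delta$; this is the structural reason the whole computation goes through cleanly.
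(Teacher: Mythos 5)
Your proposal is correct and follows essentially the same route as the paper: the identity $f^\zeta_{\Delta,\epsilon}-L^\zeta_\Delta=-(\epsilon\ell(\Delta))^2\mathcal{E}_{\epsilon\ell(\Delta)}\cH L^\zeta_\Delta$, the splitting via \eqref{correct H factorization}, the resolvent bounds of Lemma~\ref{le8-}, and the data estimate $\|\ID L^\zeta_\Delta\|_{2,\mu}^2\lesssim\mu(\Delta)$ (your Fourier-scaling argument for $\dhalf L^\zeta_\Delta$ is exactly the paper's appeal to ``homogeneity of the Fourier symbol'', just spelled out). No gaps.
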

\begin{proof}
Note that
\begin{align*}
f^\zeta_{\Delta,\epsilon }-L^\zeta_{\Delta}
&=-(\epsilon\ell(\Delta))^2\mathcal{E}_{\epsilon\ell(\Delta)}\cH L^\zeta_{\Delta}\notag\\
&=-(\epsilon\ell(\Delta))^2\mathcal{E}_{\epsilon\ell(\Delta)}\dhalf \HT \dhalf L^\zeta_{\Delta}+(\epsilon\ell(\Delta))^2\mathcal{E}_{\epsilon\ell(\Delta)}
w^{-1}\div_xw (w^{-1} A\nabla_xL^\zeta_{\Delta}).\end{align*}
Hence, using the uniform $\L^2_\mu$-boundedness of $(\epsilon\ell(\Delta))\mathcal{E}_{\epsilon\ell(\Delta)}\dhalf$ and $(\epsilon\ell(\Delta))\mathcal{E}_{\epsilon\ell(\Delta)}w^{-1}\div_x w$, see Lemma \ref{le8-}, we get
\begin{align*}
\iint_{\mathbb R^{n+1}}|f^\zeta_{\Delta,\epsilon }-L^\zeta_{\Delta}|^2\, \d\mu&\lesssim \iint_{\mathbb R^{n+1}}|(\epsilon\ell(\Delta)){\mathbb D} L^\zeta_{\Delta}|^2\, \d\mu.
\end{align*}
Furthermore,
\begin{align}
\label{eq1: laa}
\iint_{\mathbb R^{n+1}}|{\mathbb D}L^\zeta_{\Delta}|^2\, \d\mu
&= \iint_{\mathbb R^{n+1}}|\nabla_x L^\zeta_{\Delta}|^2\, \d\mu+\iint_{\mathbb R^{n+1}}|\dhalf L^\zeta_{\Delta}|^2\, \d\mu
\lesssim \mu(\Delta)
\end{align}
by the construction of $L^\zeta_{\Delta}$ (to estimate $\dhalf L^\zeta_{\Delta}$ we use the homogeneity of the Fourier symbol).  Similarly, we deduce that
\begin{eqnarray*}
\iint_{\mathbb R^{n+1}}|{\mathbb D}(f^\zeta_{\Delta,\epsilon}-L^\zeta_{\Delta})|^2\, \d\mu\lesssim \mu(\Delta).
\end{eqnarray*}
This proves $\mathrm{(i)}$ and $\mathrm{(ii)}$. To prove $\mathrm{(iii)}$, we simply use $\mathrm{(ii)}$ and \eqref{eq1: laa}.
\end{proof}

\begin{lem}
\label{ilem2--+}
Given a parabolic dyadic cube $\Delta=Q\times I$, let $f^\zeta_{\Delta,\epsilon }$ be defined as in  \eqref{testfunction}. There exist $\epsilon\in (0,1)$, depending only on the structural constants, and a finite set $W$ of unit vectors in
 $\mathbb C^{n}$, whose
 cardinality depends on $\epsilon$ and $n$, such that
 \begin{eqnarray*}
\sup_{\Delta} \frac 1{|\Delta|}\int_0^{\ell(\Delta)}\iint_{\Delta}|\mathcal{U}_\lambda w^{-1}A|^2\frac {\d\mu\d\lambda}\lambda\lesssim \sum_{\zeta\in W} \sup_{\Delta} \frac 1{|\Delta|}\int_0^{\ell(\Delta)}\iint_{\Delta}|(\mathcal{U}_\lambda w^{-1}A) \mathcal{A}_\lambda \nabla_x f^\zeta_{\Delta,\epsilon }|^2\frac {\d\mu\d\lambda}\lambda,
 \end{eqnarray*}
where the supremum is taken over all dyadic parabolic cubes $\Delta \subset \ree$.
\end{lem}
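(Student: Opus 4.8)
The argument follows the standard $Tb$-pattern: reduce the full matrix $\mathcal{U}_\lambda w^{-1}A$ to a finite family of scalar quantities obtained by testing against fixed unit vectors, then insert the test functions $f^\zeta_{\Delta,\epsilon}$ and absorb the error this insertion produces into the quantity being estimated. Concretely, I would fix $\rho\in(0,1)$, to be chosen depending only on the structural constants, and pick a finite set $W$ of unit vectors in $\IC^n$ whose $\rho$-balls cover the unit sphere; the cardinality of $W$ depends only on $\rho$ and $n$. If $0\neq\gamma\in\IC^n$, choosing $\zeta\in W$ with $|\gamma/|\gamma|-\zeta|<\rho$ gives $|\gamma\cdot\cl{\zeta}|\geq(1-\rho)|\gamma|$, hence pointwise
\[
|\mathcal{U}_\lambda w^{-1}A|^2\leq(1-\rho)^{-2}\sum_{\zeta\in W}\bigl|(\mathcal{U}_\lambda w^{-1}A)\cdot\cl{\zeta}\bigr|^2 .
\]
Since $\mathcal{U}_\lambda$ is linear and $\cl{\zeta}$ is constant, $(\mathcal{U}_\lambda w^{-1}A)\cdot\cl{\zeta}=\mathcal{U}_\lambda(w^{-1}A\cl{\zeta})$, and since $\mathcal{A}_\lambda$ fixes constant vector fields we may write, on a fixed dyadic $\Delta$,
\[
\mathcal{U}_\lambda(w^{-1}A\cl{\zeta})=(\mathcal{U}_\lambda w^{-1}A)\,\mathcal{A}_\lambda\nabla_x f^\zeta_{\Delta,\epsilon}+(\mathcal{U}_\lambda w^{-1}A)\,\mathcal{A}_\lambda\bigl(\cl{\zeta}-\nabla_x f^\zeta_{\Delta,\epsilon}\bigr).
\]
The first summand is exactly the one appearing on the right-hand side of the lemma; after squaring and integrating $\d\mu\,\d\lambda/\lambda$ over $\Delta\times(0,\ell(\Delta)]$, the whole problem reduces to an estimate for the Carleson average of the square of the error term $(\mathcal{U}_\lambda w^{-1}A)\,\mathcal{A}_\lambda(\cl{\zeta}-\nabla_x f^\zeta_{\Delta,\epsilon})$. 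As the left-hand side may a priori be infinite, I would first replace $\int_0^{\ell(\Delta)}$ by a two-sided truncation, keep all constants independent of it, and remove it at the end.

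For the error, split $\cl{\zeta}-\nabla_x f^\zeta_{\Delta,\epsilon}=(\cl{\zeta}-\nabla_x L^\zeta_\Delta)+\nabla_x(L^\zeta_\Delta-f^\zeta_{\Delta,\epsilon})$. On $\Delta$ the cut-off $\chi_\Delta$ equals $1$ and $\nabla_x\chi_\Delta$ vanishes, so $\nabla_x L^\zeta_\Delta=\cl{\zeta}$ on $\Delta$; since $\mathcal{A}_\lambda$ with $\lambda\leq\ell(\Delta)$ only averages over dyadic subcubes of $\Delta$ when evaluated inside $\Delta$, the first summand contributes nothing, and what remains is $(\mathcal{U}_\lambda w^{-1}A)\,\mathcal{A}_\lambda\nabla_x h_\Delta$ with $h_\Delta\coloneqq L^\zeta_\Delta-f^\zeta_{\Delta,\epsilon}=(\epsilon\ell(\Delta))^2\cH f^\zeta_{\Delta,\epsilon}$. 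Two complementary devices then apply. On the one hand, applying Lemma~\ref{lcarleson} to the Borel measure $\d\nu\coloneqq|\mathcal{U}_\lambda w^{-1}A|^2\,\d\mu\,\d\lambda/\lambda$ --- whose Carleson constant is precisely the quantity being estimated --- bounds $\int_0^{\ell(\Delta)}\iint_\Delta|(\mathcal{U}_\lambda w^{-1}A)\mathcal{A}_\lambda\nabla_x h_\Delta|^2\,\d\mu\,\d\lambda/\lambda$ by $c\,\|\nu\|_{\mathcal C}\,\|\nabla_x h_\Delta\|_{2,\mu}^2$, so that the error becomes a small multiple of that quantity once $\|\nabla_x h_\Delta\|_{2,\mu}^2$ is a sufficiently small multiple of $\mu(\Delta)$; this is exactly where the smallness estimates of Lemma~\ref{laa}(i)--(ii) and the freedom in $\epsilon$ enter. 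On the other hand, for the portions where this is too lossy one reverts to $(\mathcal{U}_\lambda w^{-1}A)\mathcal{A}_\lambda\nabla_x h_\Delta=\mathcal{U}_\lambda(w^{-1}A\nabla_x h_\Delta)-\mathcal{R}_\lambda\nabla_x h_\Delta$, uses the factorization \eqref{correct H factorization} to write $\mathcal{U}_\lambda(w^{-1}A\nabla_x h_\Delta)=\lambda\cE_\lambda\dhalf\HT\dhalf h_\Delta-\lambda\cE_\lambda\cH h_\Delta$ with $\cH h_\Delta=\cH L^\zeta_\Delta-(\epsilon\ell(\Delta))^{-2}h_\Delta$ and $\lambda\cE_\lambda\cH=\lambda^{-1}(I-\cE_\lambda)$, and estimates each resulting piece through the resolvent and off-diagonal bounds of Lemmas~\ref{le8-} and~\ref{le8-+}, the principal-part bound of Proposition~\ref{le10} (after inserting $\P_\lambda$ as in the proof of Proposition~\ref{prop: upshot 5 and 7}), and the Littlewood--Paley estimates of Section~\ref{sec: weighted LP}; the point is that $\cE_\lambda$ contributes either a factor $\lambda$ through $\lambda\cE_\lambda\cH$ applied to the compactly supported $L^\zeta_\Delta$ (localised by Lemma~\ref{le8-+}) or smoothing at the scale $\epsilon\ell(\Delta)$ of $f^\zeta_{\Delta,\epsilon}$, so that nothing blows up as $\lambda\to0$.

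Finally, fixing $\rho$ and then $\epsilon$ small enough, depending only on the structural constants and $|W|$ (hence ultimately only on the structural constants and $n$), one makes the error term at most half of the quantity being estimated; summing over $\zeta\in W$, dividing by $|\Delta|$, taking the supremum over dyadic $\Delta$, absorbing, and removing the truncation proves the lemma. I expect the genuine difficulty to lie entirely in the previous step: one has to split $\nabla_x h_\Delta$ and distribute the derivatives of $\cE_\lambda$ so that each contribution either carries the smallness of Lemma~\ref{laa}(i), or a factor $\lambda$, or is controlled through the Carleson structure of $\d\nu$, while keeping all constants uniform in $\Delta$; the covering and the passage from the pointwise inequality to the Carleson average are routine. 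That the whole construction can be based on product-type test functions, rather than on the system of functions used in \cite{AEN}, is precisely the simplification announced in the introduction.
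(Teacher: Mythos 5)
Your scheme is not the paper's, and it does not close. The paper never tries to make the error term $(\mathcal{U}_\lambda w^{-1}A)\,\mathcal{A}_\lambda(\cl{\zeta}-\nabla_x f^\zeta_{\Delta,\epsilon})$ small in the Carleson/square-function norm. Instead it truncates $\mathcal{U}_\lambda w^{-1}A$ to a cone $C_\zeta^\epsilon$ (not merely a $\rho$-net of directions), shows by integration by parts against a cut-off that the \emph{averages} $\frac{1}{|\Delta''|}\iint_{\Delta''}\Re(\nabla_x f^\zeta_{\Delta,\epsilon}\cdot\zeta)\,\d x\d t$ stay above $3/4$ and the averages of $|\nabla_x f^\zeta_{\Delta,\epsilon}|$ stay bounded outside a family of stopping cubes whose union has measure at most $(1-\eta')|\Delta|$, and then invokes the geometric lemma of \cite{AHLMcT} (Lem.~5.10) on the remaining Whitney boxes to obtain the pointwise bound $|\gamma^\epsilon_{\lambda,\zeta}|\leq 4\,|(\mathcal{U}_\lambda w^{-1}A)\cdot\mathcal{A}_\lambda\nabla_x f^\zeta_{\Delta,\epsilon}|$. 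Absorption then happens through the factor $(1-\eta')$ multiplying the contribution of the bad Carleson boxes, i.e.\ at the level of the measure of the exceptional set, not at the level of an $\L^2$ error.

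The concrete gap in your argument is the assertion that ``the smallness estimates of Lemma~\ref{laa}(i)--(ii) and the freedom in $\epsilon$'' make $\|\nabla_x h_\Delta\|_{2,\mu}^2$ a small multiple of $\mu(\Delta)$. Lemma~\ref{laa}(ii) gives $\|\mathbb{D}(f^\zeta_{\Delta,\epsilon}-L^\zeta_{\Delta})\|_{2,\mu}^2\lesssim\mu(\Delta)$ with an implicit constant that does \emph{not} tend to $0$ with $\epsilon$: in its proof the factor $(\epsilon\ell(\Delta))^2$ in front of $\mathcal{E}_{\epsilon\ell(\Delta)}\cH L^\zeta_{\Delta}$ is exactly consumed by the resolvent bounds of Lemma~\ref{le8-}(ii)--(iii), leaving a structural constant of order one. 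Only the undifferentiated estimate (i) carries $(\epsilon\ell(\Delta))^2$, which is precisely why the paper can exploit it only through weak averages (the integration by parts in Step~1 of its proof), never through $\|\nabla_x(L^\zeta_\Delta-f^\zeta_{\Delta,\epsilon})\|_{2,\mu}$. Consequently your Carleson-embedding bound for the error yields $c\,\|\nu\|_{\mathcal{C}}\,\mu(\Delta)$ with $c$ of order one, and the absorption fails. Your fallback route does not repair this: after subtracting the principal part via Proposition~\ref{prop: upshot 5 and 7} you are left with $\int_0^{\ell(\Delta)}\|\lambda\cE_\lambda\cH L^\zeta_\Delta\|_{2,\mu}^2\,\d\lambda/\lambda$, and bounding that by $\mu(\Delta)$ is equivalent to the quadratic estimate \eqref{kee}, i.e.\ to the Carleson estimate you are trying to prove, so the argument is circular. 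The missing ingredients are the stopping-time decomposition and the cone/geometric lemma.
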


\begin{proof}
Consider a degree of freedom $\epsilon>0$. Given a unit vector $\zeta$ in $\mathbb C^{n}$, we introduce the cone $$C_\zeta^{\epsilon}:=\{u\in \mathbb C^{n}:\ |u-(u\cdot \bar \zeta)\zeta|\leq \epsilon |u\cdot \bar \zeta|\}.$$
We  note that we can cover $\mathbb C^{n}$  by a finite number of such cones $\{C_\zeta^\epsilon\}$. The number of cones that are needed depends on $\epsilon$ and $n$. In the following, we fix one $C_\zeta^\epsilon$. We let
$$\gamma_{\lambda,\zeta}^{\epsilon}(x,t):=1_{C_\zeta^{\epsilon}}(\mathcal{U}_\lambda w^{-1}A(x,t))\mathcal{U}_\lambda w^{-1}A(x,t)$$ and consider a fixed dyadic parabolic cube $\Delta=Q\times I\subset\mathbb R^{n+1}$.

\subsection*{Step 1: Estimate of the test function along \texorpdfstring{$\boldsymbol{\overline{\zeta}}$}{z}}

We first estimate
\begin{align}
\label{cla+-a}
\iint_{\Delta} (1-\nabla_x f^\zeta_{\Delta,\epsilon }\cdot \zeta) \,  \d x \d t .
\end{align}
To start the estimate, we write
\begin{eqnarray*}
1-\nabla_xf^\zeta_{\Delta,\epsilon }\cdot \zeta=\nabla_xg^\zeta_{\Delta,\epsilon }\cdot \zeta+(1-\nabla_xL^\zeta_{\Delta}\cdot \zeta),
\end{eqnarray*}
where $g^\zeta_{\Delta,\epsilon }:=L^\zeta_{\Delta}-f^\zeta_{\Delta,\epsilon }$. By construction, we have $\nabla_x L^\zeta_{\Delta}(x,t)=\overline{\zeta}$
whenever $(x,t)\in \Delta$. Hence,
\begin{align*}
\iint_{\Delta}(1-\nabla_x L^\zeta_{\Delta}\cdot  \zeta)\, \d x \d t  =0.
\end{align*}
We have to estimate the contribution to the integral in \eqref{cla+-a} coming from $\nabla_x g^\zeta_{\Delta,\epsilon }\cdot \zeta$. To do this, let $s\in (0,1)$ yet to be chosen, and  let $\varphi\colon \R^{n+1}\to [0,1]$ be a smooth function which is $1$ on $\Delta_s := (1-s)Q \times (1-s^2)I$, supported on $\Delta$, and satisfies $\|\nabla_{x} \varphi\|_\infty \le c(s\ell(\Delta))^{-1}$, $\|\partial_{t}\varphi\|_{\infty}\le c(s\ell(\Delta))^{-2}$ for a dimensional constant $c>0$. Using $\varphi$, we see that
\begin{eqnarray*}
\iint_{\Delta}\nabla_xg^\zeta_{\Delta,\epsilon }\cdot \zeta\, \d x \d t =\iint_{\Delta}(1-\varphi)\nabla_xg^\zeta_{\Delta,\epsilon }\cdot \zeta\, \d x \d t +\iint_{\Delta}\varphi\nabla_xg^\zeta_{\Delta,\epsilon }\cdot \zeta\,\d x \d t  =: \I + \II.
\end{eqnarray*}
Using the Cauchy--Schwarz inequality, Lemma~\ref{laa}$\mathrm{(ii)}$ and \eqref{ainfw} for the $A_2$-weight $\mu^{-1}(x,t) = w^{-1}(x)$, we obtain
\begin{equation*}
\begin{aligned}
|\textup{I}|& \leq
\biggl(\iint_{\Delta} |1-\varphi|^2 \, \d \mu^{-1} \biggr)^{\frac{1}{2}} \biggl(\iint_{\Delta} |\nabla_x g^\zeta_{\Delta,\epsilon }|^2 \, \d \mu \biggr)^{\frac{1}{2}}
\\ &\lesssim
\mu^{-1}(\Delta \setminus \Delta_s)^{\frac{1}{2}}\mu(\Delta)^{\frac{1}{2}}\\
&\lesssim s^\eta \mu^{-1}(\Delta)^{\frac{1}{2}} \mu(\Delta)^{\frac{1}{2}} \\
&\leq s^\eta [w]_{A_2} |\Delta|.
\end{aligned}
\end{equation*}
To estimate $\II$, we integrate by parts
\begin{align*}
 \II = -\iint_{\R^{n+1}} g^\zeta_{\Delta,\epsilon } \nabla_x\varphi\cdot \zeta\, \d x \d t
\end{align*}
and using the Cauchy--Schwarz inequality and Lemma \ref{laa}$\mathrm{(i)}$, we obtain similarly
\begin{align*}
| \textup{II}|
& \leq \biggl(\iint_{\ree} |\nabla_x\varphi|^2 \, \d \mu^{-1} \biggr)^{\frac{1}{2}} \biggl(\iint_{\ree} |g^\zeta_{\Delta,\epsilon }|^2 \, \d \mu \biggr)^{\frac{1}{2}}  \\
& \lesssim (s\ell(\Delta))^{-1}\mu(\Delta)^{\frac{1}{2}} \epsilon \ell(\Delta) \mu^{-1}(\Delta)^{\frac{1}{2}} \\
& \leq \eps s^{-1} [w]_{A_2} |\Delta|.
\end{align*}
We now choose $s=\epsilon^{1/(\eta+1)}$, so that the estimates for $\I$ and $\II$ come with the same power of $\eps$. Putting the estimates together, we obtain for the integral in \eqref{cla+-a} that
\begin{eqnarray}\label{est1}
\frac 1 {|\Delta|}\biggl |\iint_{\Delta} 1-\nabla_xf^\zeta_{\Delta,\epsilon }\cdot \zeta\, \d x \d t \biggr |\lesssim \epsilon^{\frac{\eta}{\eta+1}}.
\end{eqnarray}
Using Lemma \ref{laa}$\mathrm{(iii)}$  and the Cauchy--Schwarz inequality, we also see that
\begin{equation}
\begin{aligned}
\label{est3/2}
\frac 1 {|\Delta|}\iint_{\Delta}| \nabla_x f^\zeta_{\Delta,\epsilon }|\, \d x \d t \leq \frac{1}{|\Delta|} \biggl(\iint_{\Delta}| \nabla_x f^\zeta_{\Delta,\epsilon }|^2 \, \d \mu \biggr)^{\frac{1}{2}} \mu^{-1}(\Delta)^{\frac{1}{2}}  \lesssim 1.
\end{aligned}
\end{equation}

\subsection*{Step 2: Choice of $\eps$}

Using the estimates in the last two displays, we see, if $\epsilon$ is chosen small enough, that
\begin{eqnarray*}\label{est2}
\frac 1 {|\Delta|}\iint_{\Delta}\mbox{Re}( \nabla_x f^\zeta_{\Delta,\epsilon }\cdot \zeta)\, \d x \d t \geq \frac 7 8,
\end{eqnarray*}
and
\begin{eqnarray*}\label{est3}
\frac 1 {|\Delta| }\iint_{\Delta}| \nabla_x f^\zeta_{\Delta, \epsilon }| \, \d x \d t \leq c,
\end{eqnarray*}
for some large constant $c$ depending only on the structural constants.
We now perform a stopping time decomposition as in \cite{AHLMcT} to select a collection $\mathcal{S}_\zeta' = \{\Delta'\}$ of dyadic parabolic subcubes of $\Delta$, which are maximal with respect
to the property that either
\begin{eqnarray}\label{est4}
\frac 1 {|\Delta'|}\iint_{\Delta'}\mbox{Re}( \nabla_x f^\zeta_{\Delta,\epsilon }\cdot \zeta)\, \d x \d t \leq \frac 3 4,
\end{eqnarray}
or
\begin{eqnarray}\label{est5}
\frac 1 {|\Delta'| }\iint_{\Delta'}| \nabla_x f^\zeta_{\Delta, \epsilon }| \, \d x \d t \geq (4\epsilon)^{-2},
\end{eqnarray}
holds. In other words, we parabolically dyadically subdivide  $\Delta$ and stop the first  time either \eqref{est4} or \eqref{est5} hold. Then, $S'_\zeta=\{\Delta'\}$ is a disjoint set of the parabolic dyadic subcubes of $\Delta$. Let
$\mathcal{S}_\zeta''=\{\Delta''\}$ be the collection of all the parabolic dyadic subcubes of
$\Delta$ not contained in any $\Delta'\in S'_\zeta$. Then, each $\Delta''\in \mathcal{S}_\zeta''$ satisfies
\begin{align}\label{est4a}
\begin{split}
	(a)&\quad \frac 1 {|\Delta''| }\iint_{\Delta''}\mbox{Re}( \nabla_x f^\zeta_{\Delta,\epsilon }\cdot \zeta)\, \d x \d t \geq \frac 3 4,\\
	(b)&\quad \frac 1 {|\Delta''| }\iint_{\Delta''}| \nabla_x f^\zeta_{\Delta, \epsilon }| \, \d x \d t \leq (4\epsilon)^{-2}.
\end{split}
\end{align}
At this stage, we claim that by the same type of argument as in the proof of statement $(i)$ in Proposition 5.7 in \cite{AHLMcT} there exists $\epsilon \in (0,1)$ small and depending only on the structural constants, and $\eta'=\eta'(\epsilon)\in (0,1)$ such that
\begin{align}\label{est4ah0}
\bigg|\bigcup_{\Delta' \in \mathcal{S}_\zeta'} \Delta'\bigg| \leq (1-\eta')|\Delta|.
\end{align}
In particular, from now on $\epsilon$ is fixed.  For the convenience of the reader, we include a proof here.

Let $E_1$ and $E_2$ be the unions of all parabolic cubes in $\mathcal{S}_\zeta'$ which satisfy \eqref{est4} and \eqref{est5}, respectively. Then, $$\bigg|\bigcup_{\Delta' \in \mathcal{S}_\zeta'} \Delta' \bigg|\leq |E_1| + |E_2|.$$
For $|E_2|$, we have
$$
|E_2| \leq (4 \epsilon)^2 \sum_{\Delta' \in \mathcal{S}_\zeta'} \iint_{\Delta'} | \nabla_x f^\zeta_{\Delta,\epsilon }|\, \d x \d t \leq (4 \epsilon)^2 \iint_{\Delta} | \nabla_x f^\zeta_{\Delta,\epsilon }|\, \d x \d t \leq  (4 \epsilon)^2 c |\Delta|,
$$
where we used \eqref{est3/2} in the last step. To control $|E_1|$, we let $h := 1-\Re (\nabla_x f^\zeta_{\Delta,\epsilon }\cdot \zeta)$ and write
\begin{align}
\label{bolle}
    |E_1| \leq 4 \sum_{\Delta'} \iint_{\Delta'} h \, \d x \d t = 4  \iint_{\Delta} h \, \d x \d t -  4  \iint_{\Delta \setminus E_1} h \, \d x \d t,
\end{align}
where the sum is taken over all parabolic subcubes of $E_1$. By \eqref{est1}, the first term on the right is controlled by $\epsilon^{\eta/(\eta+1)} |\Delta|$ times a constant depending on the structure constants. Using in succession the Cauchy--Schwarz inequality, Lemma \ref{laa}$\mathrm{(iii)}$, the $A_2$-property and Young's inequality, the second term on the right is controlled by
\begin{align*}
    &4|\Delta \setminus E_1| + 4 \mu^{-1}(\Delta \setminus E_1)^{\frac 1 2} \biggl(\iint_{\Delta} |\nabla_x f^\zeta_{\Delta,\epsilon }|^2 \, \d \mu\biggr)^{\frac 1 2} \\ &\leq   4|\Delta \setminus E_1| + 4 \widetilde{c} \mu^{-1}(\Delta \setminus E_1)^{\frac 1 2} \mu(\Delta)^{\frac 1 2} \\
    & \leq 4|\Delta \setminus E_1| + 4 \widetilde{c} |\Delta \setminus E_1|^\eta |\Delta|^{1-\eta} \\
    & \leq (4+ \widetilde{c} \eps^{-\frac{1}{\eta}}) |\Delta \setminus E_1|  + \widetilde{c} \eps^{1-\eta} |\Delta|,
\end{align*}
where $\widetilde{c}$ depends on the structural constants and changes from line to line. Going back to \eqref{bolle} and re-arranging terms, we find
\begin{align*}
 |E_1| \leq \frac{4 + \widetilde{c} \eps^{-\frac{1}{\eta}} + \widetilde{c}(\eps^{\frac{\eta}{\eta + 1}} + \eps^{1-\eta})}{5 + \widetilde{c} \eps^{-\frac{1}{\eta}}}|\Delta|,
\end{align*}
and taking $\epsilon$ small enough, we conclude \eqref{est4ah0}.

Since $\mu$ is an $A_2$-weight, we obtain from \eqref{est4ah0} and upon taking $\eta'$ smaller in dependence of the structural constants and $\eps$ that \
\begin{align}\label{est4ah}
\mu\bigg (\bigcup_{\Delta' \in \mathcal{S}_\zeta'} \Delta'\bigg) \leq (1-\eta') \mu(\Delta),
\end{align}
see for example \cite[p.~196]{Stein}.
\subsection*{Step 3: Re-introducing the averaging operator}

Given $\Delta$, we consider $\Delta''\in \mathcal{S}_\zeta^{''}$ as above. Set
\begin{eqnarray}\label{est8}
v:=\frac 1{\mu(\Delta'')}\iint_{\Delta''} \nabla_x f^\zeta_{\Delta,\epsilon }\, \d x \d t \in \mathbb C^{n}.
\end{eqnarray}
If $(x,t)\in \Delta''$ and $\ell(\Delta'')/2 < \lambda\leq \ell(\Delta'')$, then $v=(\mathcal{A}_\lambda \nabla_x f^\zeta_{\Delta,\epsilon })(x,t)$. Assume that
$u:=(\mathcal{U}_\lambda w^{-1}A)(x,t) \in C_\zeta^\epsilon$. The pair of vectors $(u,v)$  satisfies the estimates in \eqref{est4a}. Thus, we can apply \cite[Lem.~5.10]{AHLMcT} with $w := \zeta$ and conclude that $|u| \leq 4 |u \cdot v|$, that is,
\begin{eqnarray}\label{est9}
|\gamma_{\lambda,\zeta}^{\epsilon}(x,t)|\leq 4|(\mathcal{U}_\lambda w^{-1}A(x,t))\cdot (\mathcal{A}_\lambda \nabla_x f^\zeta_{\Delta,\epsilon })(x,t)|.
\end{eqnarray}
We next observe that by construction the Carleson box $\Delta\times (0,\ell(\Delta)]$ can be partitioned into Carleson boxes
$\Delta'\times (0,\ell(\Delta')]$, with $\Delta'\in \mathcal{S}_\zeta'$,  and  Whitney boxes $\Delta''\times (\ell(\Delta'')/2,\ell(\Delta'')]$, with $\Delta''\in \mathcal{S}_\zeta''$. In particular,
\begin{eqnarray*}
\frac{1}{\mu(\Delta)} \int_0^{\ell(\Delta)}\iint_{\Delta}|\gamma_{\lambda,\zeta}^{\epsilon}(x,t)|^2\, \frac {\d\mu\d\lambda}\lambda=: \I + \II,
\end{eqnarray*}
where
\begin{align*}
\I&:=\frac{1}{\mu(\Delta)} \sum_{\Delta'\in \mathcal{S}_\zeta'}
\int_0^{\ell(\Delta')}\iint_{\Delta'}|\gamma_{\lambda,\zeta}^{\epsilon}(x,t)|^2 \, \frac {\d\mu\d\lambda}\lambda,\notag\\
\II&:=\frac{1}{\mu(\Delta)} \sum_{\Delta''\in \mathcal{S}_\zeta''}
\int_{\ell(\Delta'')/2}^{l(\Delta'')}\iint_{\Delta''}|\gamma_{\lambda,\zeta}^{\epsilon}(x,t)|^2\, \frac {\d\mu\d\lambda}\lambda.
\end{align*}
Using \eqref{est4ah}, we obtain
$$\I \leq \frac{1}{\mu(\Delta)} \sum_{\Delta'\in \mathcal{S}_\zeta'} A_{\zeta}^{\epsilon}\mu(\Delta')\leq (1-\eta')A_{\zeta}^{\epsilon},$$
where $$A_{\zeta}^{\epsilon}:=\sup_{\widetilde{\Delta}}\frac 1 {\mu(\widetilde{\Delta})}\int_0^{\ell(\widetilde{\Delta})}\iint_{\widetilde{\Delta}}|\gamma_{\lambda,\zeta}^{\epsilon}(x,t)|^2\frac {\d\mu\d\lambda}\lambda,$$
and where the supremum is taken over all dyadic parabolic subcubes $\widetilde{\Delta} \subset \Delta$.  By \eqref{est9}, we have
\begin{align*}
\II\leq\frac{16}{\mu(\Delta)} \int_{0}^{\ell(\Delta)}\iint_{\Delta}|(\mathcal{U}_\lambda w^{-1}A)(x,t) \cdot (\mathcal{A}_\lambda \nabla_x f^\zeta_{\Delta,\epsilon })(x,t)|^2\frac {\d\mu\d\lambda}\lambda.
\end{align*}
Since these estimates hold for all dyadic parabolic cubes, in particular those, who are subcubes of $\Delta$, we conclude that
\begin{align*}
A_{\zeta}^{\epsilon}\leq (1-\eta')A_{\zeta}^{\epsilon} + \sup_{\widetilde{\Delta}} {\frac{16}{\mu(\widetilde{\Delta})}} \int_{0}^{\ell(\widetilde{\Delta})}\iint_{\widetilde{\Delta}}|(\mathcal{U}_\lambda w^{-1}A)(x,t )\cdot (\mathcal{A}_\lambda \nabla_x f^\zeta_{\widetilde{\Delta},\epsilon })(x,t)|^2
\frac {\d\mu\d\lambda}\lambda.
\end{align*}
Summing with respect to $\zeta\in W$ completes the proof of Lemma~\ref{ilem2--+} under the \emph{a priori} assumption that $A_{\zeta}^{\epsilon}$ is qualitatively finite, since it can then be absorbed into the left-hand side.

\subsection*{Step 4: Removing the a priori assumption} The \emph{a priori} assumption that $A_{\zeta}^{\epsilon}$ is qualitatively finite can be removed by setting $\gamma_{\lambda,\zeta}^{\epsilon}(x,t)$ to $0$ for $\lambda$ small and large, repeating the argument from \eqref{est9} on and passing to the limit at the end. For the truncated $\gamma_{\lambda,\zeta}^{\epsilon}(x,t)$, we get $A_{\zeta}^{\epsilon} < \infty$ from \eqref{ineqwell}. Indeed, for $0<\delta <1$ small we have
\begin{align*}
 A_{\zeta}^{\epsilon} \leq   \int_{\delta}^{\delta^{-1}} \iint_{\Delta}  |\mathcal{U}_{\lambda} w^{-1} A|^2  \frac{\d\mu\d\lambda}{\lambda} \lesssim \mu(\Delta) |\ln(\delta)|.
\end{align*}
This completes the argument.
\end{proof}

\subsection{The Carleson measure estimate: proof of Lemma \ref{ilem2--}} \label{Carleson} Thanks to Lemma \ref{ilem2--+} it suffices to prove
\begin{eqnarray}
\label{ff1}
\int_0^{\ell(\Delta)}\iint_\Delta|(\mathcal{U}_\lambda w^{-1}A) \mathcal{A}_\lambda \nabla_x f^\zeta_{\Delta,\epsilon}|^2\frac {\d\mu\d\lambda}\lambda\lesssim \mu(\Delta).
\end{eqnarray}
The left-hand side in \eqref{ff1} is bounded by
\begin{align*}
  |||(\lambda\mathcal{E}_\lambda \cH + (\mathcal{U}_\lambda w^{-1} A) \mathcal{A}_\lambda \nabla_x)f^\zeta_{\Delta,\epsilon}|||_{2,\mu}^2 + \int_0^{\ell(\Delta)}\iint_\Delta|\lambda \cE_\lambda \cH f^\zeta_{\Delta,\epsilon} |^2\frac {\d\mu\d\lambda}\lambda =: \I + \II.
\end{align*}
By Proposition~\ref{prop: upshot 5 and 7} and Lemma~\ref{laa} we have
\begin{align*}
	\I \lesssim \| \mathbb{D} f^\zeta_{\Delta,\epsilon}\|_{2,\mu}^2 \lesssim \mu(\Delta).
\end{align*}
As for $\II$, we obtain from \eqref{testfunction} that $\cH f^\zeta_{\Delta,\epsilon }=( L^\zeta_{\Delta}-f^\zeta_{\Delta,\epsilon })/(\epsilon \ell(\Delta))^2$. Using the $\L^2_\mu$-boundedness of $\mathcal{E}_\lambda$, see Lemma \ref{le8-}, and then Lemma~\ref{laa}, we obtain
\begin{align*}
	\II
	\lesssim \int_0^{\ell(\Delta)} \|\lambda (\epsilon \ell(\Delta))^{-2}(L^\zeta_{\Delta}-f^\zeta_{\Delta,\epsilon })\|_{2,\mu}^2 \, \frac{\d \lambda}{\lambda}
	= \frac{1}{2 \eps^4 \ell(\Delta)^2 } \|L^\zeta_{\Delta}-f^\zeta_{\Delta,\epsilon }\|_{2,\mu}^2 \lesssim \eps^{-2} \mu(\Delta).
\end{align*}
This completes the proof of \eqref{ff1}, and hence the proof of Theorem~\ref{thm:Kato}.

\def\cprime{$'$} \def\cprime{$'$} \def\cprime{$'$}


\begin{thebibliography}{10}
\providecommand{\url}[1]{{\tt #1}}
\providecommand{\urlprefix}{URL}
\providecommand{\eprint}[2][]{\url{#2}}
\bibitem{5Man}
\textsc{M.~Alfonseca}, \textsc{P.~Auscher}, \textsc{A.~ Axelsson}, \textsc{S.~Hofmann}, and \textsc{S.~Kim}.
\newblock{\em Analyticity of layer potentials and $L^2$ solvability of boundary value problems for divergence form elliptic equations with complex $L^\infty$ coefficients}.
\newblock{Adv. Math. \textbf{226} (2011), no. 5, 4533--4606.}

\bibitem{Auscher-Amenta}
\textsc{A.~Amenta} and \textsc{P.~Auscher}.
\newblock{\em Elliptic boundary value problems with fractional regularity data}.
\newblock{ Amer. Math. Soc, Providence, RI, (2018).}

\bibitem{Auscher-Axelsson}
\textsc{P.~Auscher} and \textsc{A.~Axelsson}.
\newblock{\em Weighted maximal regularity estimates and solvability of non-smooth elliptic systems I}.
\newblock  Invent. Math. \textbf{184} (2011), no. 1, 47--115.

\bibitem{AE}
\textsc{P.~Auscher} and \textsc{M.~Egert}.
\newblock {\em On non-autonomous maximal regularity for elliptic operators in divergence form}.
\newblock Arch. Math. \textbf{107} (2016), no.~3, 271--284.

\bibitem{AEN}
\textsc{P.~Auscher}, \textsc{M.~Egert}, and \textsc{K.~Nystr{\"o}m}.
\newblock {\em $\L^2$ well-posedness of boundary value problems and the Kato square root problem for parabolic systems with measurable coefficients}.
\newblock J. Eur. Math. Soc. (JEMS) \textbf{22} (2020), no. 9, 2943--3058.

\bibitem{AEN1}
\textsc{P.~Auscher}, \textsc{M.~Egert}, and \textsc{K.~Nystr\"om}.
\newblock {\em The Dirichlet problem for second order parabolic operators in divergence form}.
\newblock J. \'Ecole Polytech. Math. \textbf{5} (2018), 407--441.

\bibitem{AHLMcT}
\textsc{P.~Auscher}, \textsc{S.~Hofmann}, \textsc{M.~Lacey}, \textsc{A.~McIntosh}, and \textsc{
  P.~Tchamitchian}.
\newblock {\em The solution of the {K}ato square root problem for second order
  elliptic operators on {$\mathbb{R}^n$}}.
\newblock Ann. of Math. (2) \textbf{156} (2002), no.~2, 633--654.

\bibitem{Auscher-Mourgoglou}
\textsc{P.~Auscher} and \textsc{M.~ Mourgoglou}.
\newblock{\em Representation and uniqueness for boundary value elliptic problems via first order systems\/}.
\newblock{Rev. Mat. Iberoam. \textbf{35}  (2019), no. 1, 241--315.}

\bibitem{Auscher-Rosen}
\textsc{P.~Auscher} and \textsc{A.~Ros\'{e}n}.
\newblock{\em Weighted maximal regularity estimates and solvability of nonsmooth elliptic systems, II\/}.
\newblock  Anal. PDE \textbf{5} (2012), no. 5, 983--1061.

\bibitem{ARR}
\textsc{P.~Auscher}, \textsc{A.~Ros\'{e}n}, and \textsc{D.~Rule}.
\newblock {\em Boundary value problems for degenerate elliptic equations and systems\/}.
\newblock Ann. Sci. \'{E}c. Norm. Sup\'{e}r (4) \textbf{48} (2015), no.~4, 951--1000.


\bibitem{Asterisque}
\textsc{P.~Auscher} and \textsc{P.Tchamitchian}.
\newblock{\em Square root problem for divergence operators and related topics\/.}
\newblock{Astérisque No. 249 (1998).}

\bibitem{CNS}
\textsc{A.~Castro}, \textsc{K.~Nystr\"om}, and \textsc{O.~Sande}.
\newblock {\em Boundedness of single layer potentials associated to divergence form parabolic equations with complex coefficients\/}.
\newblock Calc. Var. Partial Differ. Equ. \textbf{55} (2016), no.~5, Article~124.


\bibitem{CS}
\textsc{E.~Chiarenza} and \textsc{R.~Serapioni}. \newblock{\em A remark on a Harnack inequality for degenerate parabolic
equations\/}. Rend. Sem. Mat. Univ. Padova \textbf{73} (1985), 179--190.

\bibitem{CUR-Chema}
\textsc{D.~Cruz-Uribe}, \textsc{J.~Martell}, \textsc{C.~Rios}.
\newblock{\em On the Kato problem and extensions for degenerate elliptic operators\/}.
\newblock{Anal. PDE \textbf{11} (2018), no. 3, 609--660.}




\bibitem{CUR1}
\textsc{D. Cruz-Uribe} and \textsc{C. Rios}. \newblock{\em The solution of the Kato problem for degenerate elliptic
operators with Gaussian bounds}.
Trans. Amer. Math. Soc. \textbf{364} (2012), no. 7, 3449–-3478.

\bibitem{CUR2}
\textsc{D.~Cruz-Uribe} and \textsc{C.~ Rios}.
\newblock{\em Corrigendum to ``Gaussian bounds for degenerate parabolic equations''\/}.
J. Funct. Anal. \textbf{255} (2) (2008) 283--312.

\bibitem{CUR}
\textsc{D.~Cruz-Uribe} and \textsc{C.~Rios}.
\newblock {\em The Kato problem for operators with weighted ellipticity}.
\newblock Trans. Amer. Math. Soc. {\bf 367} (2015), no.~7, 4727--4756.

\bibitem{DJS}
\textsc{G. David}, \textsc{J.-L. Journ{\'e}},  and \textsc{S. Semmes.} \newblock{\em Op{\'e}rateurs de Calder{\'o}n-Zygmund,
fonctions para-accr{\'e}tives et interpolation.} Rev. Mat. Iberoamericana \textbf{1} (1985), no.~1, 1--56.

\bibitem{DZ}
\textsc{D.~Dier} and \textsc{R.~Zacher}.
\newblock {\em Non-autonomous maximal regularity in Hilbert spaces}. J. Evol. Equ. \textbf{17} (2017), no.~3, 883--907.


\bibitem{Hitchhiker}
\textsc{E.~Di Nezza}, \textsc{G.~Palatucci}, and \textsc{E.~Valdinoci}.
\newblock{\em Hitchhiker's guide to the fractional Sobolev spaces\/}.
\newblock{ Bull. Sci. Math \textbf{136} (2012), no. 5, 521--573.}

\bibitem{Escauriaza}
\textsc{L.~Escauriaza} and \textsc{S.~Hofmann}.
\newblock{\em Kato square root problem with unbounded leading coefficients\/}.
\newblock{Proc. Amer. Math. Soc. {\bf 146}  (2018), no.~12, 5295--5310.}

\bibitem{Garcia-Cuerva}
\textsc{J.~Garcia-Cuerva} and \textsc{J.~De Francia}.
\newblock{\em Weighted norm inequalities and related topics} North-Holland Mathematics Studies, 116. Notas de Matem{\'a}tica [Mathematical Notes], 104. North-Holland Publishing Co.,
\newblock{ Amsterdam, (1985).}



\bibitem{HKM}
\textsc{J.~Heinonen}, \textsc{T.~Kilpel{\"a}inen}, and \textsc{O.~Martio}.
\newblock{\em Nonlinear potential theory of degenerate elliptic equations.} Dover Publications Inc.,  Mineola, NY, (2006).

\bibitem{Haase}
\textsc{M.~Haase}.
\newblock{\em The {F}unctional {C}alculus for {S}ectorial {O}perators.}
\newblock Operator Theory: Advances and Applications, vol.~169, Birkh{\"a}user, Basel, (2006).

\bibitem{HLe}
\textsc{S.~Hofmann} and \textsc{J.L. Lewis}.
\newblock{\em The $L^p$ Neumann problem for the heat equation in non-cylindrical domains\/}.
\newblock{J. Func. Anal. \textbf{220} (2005), no. 1, 1--54.}


\bibitem{HKMP}
\textsc{S.~Hofmann}, \textsc{C.~Kenig}, \textsc{S.~Mayboroda}, and \textsc{J.~Pipher}.
\newblock{\em Square function/non-tangential maximal function estimates and the Dirichlet problem for non-symmetric elliptic operators\/}.
\newblock{J. Amer. Math. Soc. \textbf{28} (2015), no. 2, 483--529.}


\bibitem{Li}
\textsc{S.~Hofmann}, \textsc{L.~Li}, \textsc{S.~Mayboroda}, and \textsc{J.~Pipher}.
\newblock{\em The Dirichlet problem for elliptic operators having a BMO anti-symmetric part.\/}
\newblock{Math. Ann. \textbf{382} (2022), no. 1-2, 103--168.}



 \bibitem{MorrisWeightedElliptic}
\textsc{S.~Hofmann}, \textsc{P.~Le}, and \textsc{A.~Morris}.
\newblock{\em Carleson measure estimates and the Dirichlet problem for degenerate elliptic equations\/}.
\newblock{Anal. PDE \textbf{12} (2019), no. 8, 2095--2146.}




\bibitem{IKO}
 \textsc{K.~Ishige.}, \textsc{Y.~Kabeya}, and \textsc{E.M.~Ouhabaz}. \newblock{\em The heat kernel of a Schr{\"o}dinger operator with inverse sequare potential.\/} Proc. London Math. Soc. \textbf{115} (2017), 381--410.

\bibitem{Kaplan}
\textsc{S.~Kaplan}.
\newblock{\em Abstract boundary value problems for linear parabolic equations\/}.
\newblock{Ann. Scuola Norm. Sup. Pisa Cl. Sci. (3) \textbf{20} (1966), 395--419.}

\bibitem{Kato}
\textsc{T.~Kato}.
\newblock{\em Perturbation theory for linear operators.}
\newblock{Springer, Berlin and New York,
(1966).}

\bibitem{Kilp}
\textsc{T. Kilpel{\"a}inen}. \newblock{\em Weighted Sobolev spaces and capacity.}
\newblock Ann. Acad. Sci. Fenn. Ser. A I. Math. \textbf{19} (1994), 95--113.




\bibitem{Mc}
\textsc{A.~McIntosh}.
\newblock {\em Operators which have an {$H^\infty$} functional calculus\/}.
\newblock In Miniconference on operator theory and partial differential
 equations, Proc. Centre Math. Anal. Austral. Nat. Univ., vol.~14,
 Austral. Nat. Univ., Canberra, (1986), 210--231.

\bibitem{LN}
\textsc{M. Litsg{\aa}rd and K.~Nystr\"om}.
\newblock {\em On local regularity estimates for fractional powers of parabolic operators with time-dependent measurable coefficients\/}.
\newblock ArXiv: 2104.07313v3

\bibitem{Morris}
\textsc{A. Morris}.
\newblock{\em The Kato square root problem on submanifolds\/.}
J. Lond. Math. Soc. (2) \textbf{86} (2012), no. 3, 879--910.

\bibitem{N1}
\textsc{K.~Nystr\"om}.
\newblock {\em Square functions estimates and the Kato problem for second order parabolic operators in $\mathbb R^{n+1}$\/}.
\newblock Adv. Math. \textbf{293} (2016), 1--36.

\bibitem{N2}
\textsc{K.~Nystr\"om}.
\newblock {\em {$\L^2$} Solvability of boundary value problems for divergence form parabolic equations with complex coefficients\/}.
\newblock J. Differential Equations \textbf{262} (2016), no.~3, 2808--2939.

\bibitem{Ouhabaz}
\textsc{E.~Ouhabaz}.
\newblock{\em The square root of a parabolic operator\/}.
\newblock{J. Fourier Anal. Appl. \textbf{27} (2021), no. 3, paper No. 59, 15 pp.}

\bibitem{Stein}
\textsc{E.M. Stein}.
\newblock{\em Harmonic Analysis: Real-Variable Methods, Orthogonality, and Oscillatory Integrals:\/}
\newblock Monographs in Harmonic Analysis, III, Princeton Mathematical
Series, vol. 43, Princeton University Press, Princeton, NJ, (1993).

\bigskip
\end{thebibliography}
\end{document}